\newtheorem{prop}[theorem]{Proposition}
\newtheorem{defn}[theorem]{Definition}
\DeclarePairedDelimiter{\abs}{\lvert}{\rvert}
\DeclarePairedDelimiter{\sprod}{\langle}{\rangle}
\newcommand{\Sp}[2]{\sprod{#1 , #2}}
\newcommand{\laur}[1]{(\!(#1)\!)}
\newcommand{\K}{\mathbb{K}}
\newcommand{\C}{\mathbb{C}}
\newcommand{\Q}{\mathbb{Q}}
\newcommand{\Z}{\mathbb{Z}}
\newcommand{\M}{\mathcal{M}}
\newcommand{\la}{\mathfrak{a}}
\newcommand{\im}{\operatorname{im}}
\newcommand{\id}{\operatorname{id}}
\newcommand{\A}{\mathscr{A}}
\newcommand{\factor}[2]{\left.\raisebox{.2em}{$#1$}\middle/\raisebox{-.2em}{$#2$}\right.}
\newcommand{\ph}{[[ \lambda ]]}
\newcommand{\g}{\mathfrak{g}}
\newcommand{\h}{\mathfrak{h}}
\newcommand{\ad}{\operatorname{ad}}
\newcommand{\Hom}{\operatorname{Hom}}
\newcommand{\Homgr}{\operatorname{Homgr}}
\newcommand{\Der}{\operatorname{Der}}
\newcommand{\Ind}{\operatorname{Ind}}
\newcommand{\End}{\operatorname{End}}
\renewcommand{\id}{\operatorname{id}}
\newcommand{\U}{U}
\newcommand{\opp}{\mathrm{opp}}
\newcommand{\otimeshat}{\mathbin{\hat\otimes}}
\newcommand{\ev}{\operatorname{ev}}
\newcommand{\Vect}{\mathbf{Vec}}
\newcommand{\grVec}{\mathbf{grVec}}
\newcommand{\qbinom}[2]{\begin{bmatrix} #1 \\ #2 \end{bmatrix}_t}
\author{Benedikt Hurle  \and Abdenacer Makhlouf}
\title{Quantization of color Lie bialgebras}
\institute{
 B. Hurle  \at Universit\'{e} de Haute-Alsace, IRIMAS-d\'epartement de math\'ematiques, 6 bis rue des Fr\`{e}res Lumi\`{e}re, 68093 Mulhouse, France,\\ \email{benedikt.hurle@uha.fr}
\and A. Makhlouf \at Universit\'{e} de Haute-Alsace, IRIMAS-d\'epartement de math\'ematiques, 6 bis rue des Fr\`{e}res Lumi\`{e}re, 68093 Mulhouse, France,\\ \email{abdenacer.makhlouf@uha.fr}
}
\begin{document}


\maketitle

\abstract{
The main purpose  of this paper is to study Quantization of color Lie bialgebras, generalizing to color case the approach by Etingof-Kazhdan which were considered for superbialgebras by Geer.  Moreover we discuss Drinfeld category, Quantization of Triangular color Lie bialgebras and Simple color Lie bialgebras of Cartan type.} 

\tableofcontents
 \markboth{\leftmark}{\rightmark}
 

\section*{Introduction}

Lie bialgebras appeared in the 80th  mostly due  to V. G. Drinfeld \cite{Drinfeld-LieBialgebra,quasiLie} and M. A. Semenov-Tian-Shansky \cite{semenov}, who introduced the Poisson-Lie groups and discussed the relationships with the concept of a classical  $r$-matrix and Yang-Baxter equation. The Lie algebra  of a Poisson-Lie group  has a natural structure of Lie bialgebra, the Lie group structure gives the Lie bracket as usual, and the linearization of the Poisson structure on the Lie group  gives the Lie bracket on 
the dual of the Lie algebra.  Deformations and their relationships with cohomology of Lie algebras were discussed  by Nijenhuis-Richardson, following Gerstenhaber's approach, in \cite{NR} while cohomology of Lie bialgebras where studied  first in \cite{lecomte}.  One of the main problems formulated by Drinfeld in quantum group theory was  the existence of a universal quantization for Lie bialgebras, which arose from the problem of quantization of Poisson Lie groups. This issue was solved by Etingof-Kazhdan in  \citep{ek1} using the methods and ideas from \cite{KL}.

 Lie superbialgebras and Poisson-Lie supergroups were studied in \cite{andrus}. Quantized  functors have been constructed for  Lie superbialgebras and group Lie bialgebras in   \cite{Enriquez2008,geer}. 
Color Lie algebras are a natural generalization of superalgebra where the grading is defined by any abelian group and a commutation factor. They have become an interesting subject of mathematics and physics. The first study and cohomology  of color Lie algebras were  considered by Scheunert  in \cite{scheunert1,scheunert98}.  Moreover, various properties were studied in color setting, see \cite{colorlie} and references therein.
 
 We aim in this paper to discuss quantization of color Lie bialgebras, by adapting the method of Etingof-Kazhdan \citep{ek1}, which were already considered in the super-case by Geer  in \citep{geer}. In the first section, we provide some preliminaries about colored structures and quantized universal enveloping algebras. In in Section 2 Drinfeld associator and category are discussed. Section 3 includes the main results about Quantization of color Lie bialgebras. Section 4 deals with triangular color Lie bialgebras and in Section 5 a second quantization is presented. The last section provides a discussion about simple color Lie bialgebras of Cartan Type.

\section{Preliminaries}

In this section we give the basic definitions of color vector space, Lie bialgebra and so on. We also make some remarks regarding category theory, which we will need later to use in the construction of Etingof-Kazhdan.
In fact, we will need a bit of enriched category theory see e.g. \citep{kelly}.

Let $\K$ be a field of characteristic 0.

\subsection{Graded and Color vector spaces}

\begin{defn}[Commutation factor]
Let $\Gamma$ be an abelian  group. Then a map $\epsilon: \Gamma \times \Gamma \rightarrow \K^\times$ is called an anti-symmetric bicharacter or commutation factor if
\begin{align}
\epsilon(f+g,h) &= \epsilon(f,h)  \epsilon(g,h), \\
\epsilon(f,g+h) &= \epsilon(f,g) \epsilon(f,h), \\
\epsilon(f,g)\epsilon(g,f) &= 1. 
\end{align}
\end{defn}
We note that the product of two commutation factors is again a commutation factor.

Let $\Gamma$ be an abelian group, and for each $g \in \Gamma$ let $V_g$ be a vector space, then we call $V = \bigoplus_{g \in G} V_g$ a $\Gamma$-graded vector space.

\begin{defn}
A $(\Gamma,\epsilon)$ color vector space is a $\Gamma$-graded vector space $V$, together with a commutation factor $\epsilon$.
\end{defn}

In the following a color vector space will always be with respect to a fixed $\Gamma$ and $\epsilon$.

Given an abelian group $\Gamma$ and a commutation factor $\epsilon$ on  it, one can define the category $\grVec$ of $(\Gamma,\epsilon)$-color vector spaces. This category is closed monoidal, where the tensor product is given by 
\begin{equation}
(V \otimes W)_i = \bigoplus_{j+k=i} V_j \otimes W_k,
\end{equation}
and the internal homs, called the graded morphisms, are given by
\begin{align}
& \Homgr(V,W) = \bigoplus_{i \in \Gamma} \Homgr^i (V,W), \\  &  \Homgr^i (V,W) = \{ \phi: V \rightarrow W \text{ linear} | \phi(V_j) \subset W_{j+i} \}.
\end{align}
It is clear that $\Homgr(V,W)$, is again a color vector space.
The morphisms in the ordinary category $\grVec$ are given by the graded morphisms of degree 0.  We further define the flip $\tau_{V\otimes W}: V\otimes W \to W \otimes V$ by $v \otimes  w \mapsto \epsilon(\abs{v},\abs{w}) w\otimes v$, if the involved spaces are clear we simply write $\tau$. With this $\grVec$ becomes a symmetric monoidal category.
We also note that $\grVec$ is an enriched category over $\Vect$.
Another, maybe nicer, way to define it is to consider the category $\hat\Gamma$,
enriched over $\Vect$, which has as objects the elements of $\Gamma$, and as morphisms $\Hom(g,h) = \K$ for all $g,h \in \Gamma$. One can make $\Gamma$ into a monoidal category by defining the tensor product to be the addition. Then one can consider the associator to be trivial. A commutation factor gives a symmetry on this category. With this structure the $\Vect$-functor category $\Hom(\hat \Gamma,\Vect)$ is again a symmetric monoidal category and isomorphic to $\grVec$.

Throughout this paper, we will use  the Koszul rule, this means given graded morphisms 
$\phi$ and $\psi$ we have $( \phi \otimes \psi) (x \otimes y) = \epsilon(\psi,x) \phi(x) \otimes \psi(y)$, with  $\epsilon(\phi , x)$ denoting $\epsilon(\abs{\phi},\abs{x})$ for short and  where $\abs{\cdot}$ is  the degree.

\subsection{Color Lie algebras}

In this section we  give the basic definitions of color Lie algebras and bialgebras, for more details see e.g. \citep{colorlie}.

\begin{defn}[Color Lie algebra]
For a group $\Gamma$ and a commutation factor $\epsilon$,  a $(\Gamma,\epsilon)$-color Lie algebra is a $\Gamma$-graded vector space $\g$ with a graded bilinear map $[\cdot,\cdot]: \g \times \g \rightarrow \g$ of degree zero, such that  for any homogeneous elements $a,b,c \in \g$ 
\begin{gather}
 [a,b] = - \epsilon(a,b)[b,a], \\
j(a,b,c) := \epsilon(c,a) [a,[b,c]] +\epsilon(a,b) [b,[c,a]] +\epsilon(b,c) [c,[a,b]]  =0.
\end{gather}
\end{defn}
The second equation is called color Jacobi identity. It  can also be written as 
\begin{equation}
[a,[b,c]] = [[a,b],c] + \epsilon(a,b)[b,[a,c]], 
\end{equation} 
which shows that the adjoint representation $\ad_a (b) := [a,b]$ for $a \in \g$ is a color Lie algebra derivation.
Using $\sigma (a \otimes b \otimes c) = \epsilon(a,bc) b \otimes c \otimes a$, $\beta(a,b) = [a,b]$ and the Koszul rule, it can also be written as
\begin{equation}
\beta(\beta \otimes \id) (\id + \sigma + \sigma^2) =0.
\end{equation}

A morphism $\phi$ of color Lie algebras $(\g,[\cdot,\cdot])$ and $(\h, [\cdot,\cdot]')$ is a morphism of color vector spaces such that 
\begin{equation}
[\phi(x), \phi(y)]' = \phi([x,y]).
\end{equation}
If $\phi$ is homogenous, from this one gets that $\deg \phi + \deg \phi = \deg \phi$ and so $\deg \phi= 0$.  

An ideal of a color Lie algebra $\g$ is a graded subspace $\mathfrak{i}$ such that $[\mathfrak{i},\g] \subset \mathfrak{i}$.
We call a color Lie algebra simple if it has no color Lie ideal. Note that it can have a non graded ideal. 
If $\mathfrak{i} \subset \g$ is a color Lie ideal then  the quotient $\factor{\g}{\mathfrak{i}}$ is again a color Lie algebra.
A color Lie subalgebra is a graded subspace $\h$ such that $[\h,\h] \subset \h$.

Let $A$ be a $\Gamma$-graded associative algebra and $\epsilon$ a commutation factor then  
\begin{equation}
[a,b] = ab - \epsilon(a,b) ba
\end{equation}
defines a color Lie algebra structure. We denote the corresponding color Lie algebra by $A_L$.
So we get especially a Lie bracket on the graded homomorphisms of a color vector space, for which we have:
\begin{prop}
The color derivations $\Der(A)$ of a color algebra $A$ form a color Lie subalgebra of $\Homgr(A)$.
\end{prop}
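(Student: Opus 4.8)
The plan is to realise $\Homgr(A)$ as a $\Gamma$-graded associative algebra under composition of linear maps --- composition is automatically associative and respects the grading, since $\Homgr^i(A)\circ\Homgr^j(A)\subset\Homgr^{i+j}(A)$ --- so that the color Lie algebra $\Homgr(A)_L$ is obtained from the construction $B\mapsto B_L$ recalled above, with bracket $[\phi,\psi]=\phi\circ\psi-\epsilon(\phi,\psi)\,\psi\circ\phi$ on homogeneous elements. It then remains to verify the two defining requirements of a color Lie subalgebra: that $\Der(A)$ is a $\Gamma$-graded subspace of $\Homgr(A)$, and that it is closed under this bracket.

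For the first requirement I would first note that a homogeneous $D\in\Homgr^i(A)$ is a color derivation precisely when $D(ab)=D(a)\,b+\epsilon(i,\abs{a})\,a\,D(b)$ for all homogeneous $a,b\in A$. Given an arbitrary color derivation $D$, write $D=\sum_i D_i$ with $D_i$ its component in $\Homgr^i(A)$; applying the Leibniz identity and comparing, for fixed homogeneous $a,b$, the homogeneous components of degree $i+\abs{a}+\abs{b}$ on both sides (the multiplication of $A$ having degree $0$) forces each $D_i$ to obey the Leibniz rule, hence $D_i\in\Der(A)$. Together with the evident closure under $\K$-linear combinations this gives $\Der(A)=\bigoplus_i\big(\Der(A)\cap\Homgr^i(A)\big)$, a graded subspace.

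For the second requirement, take homogeneous color derivations $D,D'$ of degrees $d,d'$. Expanding $DD'(ab)$ and $D'D(ab)$ by two successive applications of the Koszul--Leibniz rule, and using $\abs{D'(a)}=d'+\abs{a}$ together with $\epsilon(d,d'+\abs{a})=\epsilon(d,d')\,\epsilon(d,\abs{a})$, one writes each of $DD'(ab)$ and $D'D(ab)$ as a sum of four terms: one proportional to $(\,\cdot\,)\,b$, one proportional to $a\,(\,\cdot\,)$, and two mixed terms of the shapes $D'(a)D(b)$ and $D(a)D'(b)$. Forming $[D,D'](ab)=DD'(ab)-\epsilon(d,d')\,D'D(ab)$, the first kind of term assembles to $[D,D'](a)\,b$, the second to $\epsilon(d+d',\abs{a})\,a\,[D,D'](b)$, and the two mixed terms cancel in pairs, the cancellation being exactly the identity $\epsilon(d,d')\,\epsilon(d',d)=1$. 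Since $\abs{[D,D']}=d+d'$, this is the Leibniz identity for $[D,D']$, so $[D,D']\in\Der(A)$; bilinearity then extends closure to arbitrary, not necessarily homogeneous, derivations.

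The only genuinely laborious step is the sign bookkeeping in the last paragraph, and even there nothing subtle happens: the point is merely that the Koszul signs produced by composing two Leibniz expansions are matched up by the antisymmetry $\epsilon(d,d')\epsilon(d',d)=1$ of the commutation factor, so the mixed terms drop out and one is left with a derivation of degree $d+d'$. I do not anticipate any conceptual obstacle.
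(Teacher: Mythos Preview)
Your argument is correct and is the standard one. Note, however, that the paper states this proposition without proof, so there is nothing to compare against; your write-up would in fact fill a gap rather than duplicate existing content.

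One small remark on presentation: the paper never defines ``color derivation'' explicitly, so your opening sentence --- that a homogeneous $D\in\Homgr^i(A)$ is a color derivation when $D(ab)=D(a)b+\epsilon(i,\abs{a})\,a\,D(b)$ --- is actually doing double duty as a definition. You may want to flag that. Also, in the decomposition step you tacitly assume the multiplication $\mu:A\otimes A\to A$ has degree $0$; this is implicit in the paper's convention that structure maps of a color algebra are degree-$0$ morphisms, but it is what makes the degree-matching argument go through, so it is worth saying. Otherwise the sign bookkeeping is exactly right: the cancellation of the mixed terms uses only $\epsilon(d,d')\epsilon(d',d)=1$ and the bicharacter identity $\epsilon(d+d',\abs{a})=\epsilon(d,\abs{a})\epsilon(d',\abs{a})$, both of which are part of the commutation-factor axioms.
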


\begin{defn}[Universal enveloping algebra] \label{de:cuea}
For a color Lie algebra $\g$ one defines the universal enveloping algebra, or short UEA, $U(\g)$ by the tensor algebra $T(\g)$  modulo the ideal generated by elements of the form
\begin{equation}
 x y- \epsilon(x,y)yx  - [x,y] 
\end{equation}
for $x,y \in \g \subset U(\g)$.
\end{defn}

\begin{theorem}[\citep{scheunert1}]
The universal enveloping algebra   $U(\g)$  is an  (filtered) associative color  algebra. With the graded commutator it is a color Lie-algebra, with $\g$ as a color Lie subalgebra. It has the universal property that is for any color algebra $A$ and color Lie algebra homomorphism $f: g \rightarrow A_L$ there exits a unique algebra homomorphisms such that $f = g|_\g$.
\end{theorem}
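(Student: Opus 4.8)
The plan is to dispatch the four assertions in order, the first, second and fourth being essentially formal and the third (that $\g$ embeds) being the substantial point.

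First I would note that $T(\g) = \bigoplus_{n \ge 0} \g^{\otimes n}$ is a $\Gamma$-graded associative algebra, and that for homogeneous $x,y$ the element $xy - \epsilon(x,y)yx - [x,y]$ is homogeneous of degree $\abs{x}+\abs{y}$; hence the two-sided ideal $J$ it generates is $\Gamma$-graded, so $U(\g) = T(\g)/J$ inherits a $\Gamma$-grading making it an associative color algebra. For the filtration, equip $T(\g)$ with the usual ascending filtration $T^{\le n}(\g) = \bigoplus_{k \le n} \g^{\otimes k}$: the generators of $J$ lie in $T^{\le 2}(\g)$, and since $T^{\le n}(\g) \cdot T^{\le m}(\g) \subseteq T^{\le n+m}(\g)$, the images $U^{\le n}(\g)$ of $T^{\le n}(\g)$ form an exhaustive algebra filtration on $U(\g)$. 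The second assertion is immediate from the Proposition preceding the statement: $U(\g)$ is a graded associative algebra, so $U(\g)_L$ with bracket $[a,b] = ab - \epsilon(a,b)ba$ is a color Lie algebra, and that the canonical map $\iota \colon \g \to U(\g)$ is a color Lie morphism into $U(\g)_L$ is precisely the defining relation of $J$.

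The \emph{heart} of the proof is that $\iota$ is injective, equivalently the color Poincar\'e--Birkhoff--Witt theorem. I would fix a totally ordered homogeneous basis $(e_i)_{i \in I}$ of $\g$ and consider the vector space $M$ freely spanned by the ordered monomials $e_{i_1}\cdots e_{i_r}$ with $i_1 \le \cdots \le i_r$, including the empty monomial $1$. One then defines, by induction on length and on index, a left action of each $e_i$ on $M$: acting by concatenation when $i$ is $\le$ the first index present, and otherwise pushing $e_i$ past the leading factor via $e_i e_j = \epsilon(e_i,e_j) e_j e_i + [e_i,e_j]$. The key verification is that this extends to an associative $T(\g)$-action annihilating $J$; the only nontrivial relation to check is the cubic one obtained by moving $e_i$ past $e_j e_k$ in two ways, and this is exactly where the color Jacobi identity is used, together with the bicharacter identities for $\epsilon$ to track the signs introduced by the Koszul rule. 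Granting this, $M$ is a $U(\g)$-module with $(e_{i_1}\cdots e_{i_r})\cdot 1 = e_{i_1}\cdots e_{i_r}$ for ordered tuples, so the images of the ordered monomials are linearly independent in $U(\g)$; in particular $\iota$ is injective, and with the previous paragraph $\g$ is a color Lie subalgebra. (This is Scheunert's argument \citep{scheunert1}, which I would either reproduce in this color setting or cite.)

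Finally, for the universal property, let $A$ be an associative color algebra and $f \colon \g \to A_L$ a color Lie algebra homomorphism. By the universal property of the tensor algebra, $f$ extends uniquely to a graded algebra homomorphism $\tilde f \colon T(\g) \to A$; since $f$ preserves the bracket and $A_L$ has bracket $ab - \epsilon(a,b)ba$, the map $\tilde f$ kills every generator $xy - \epsilon(x,y)yx - [x,y]$ of $J$, hence factors through a graded algebra homomorphism $\bar f \colon U(\g) \to A$ with $\bar f \circ \iota = f$. Uniqueness holds because $\iota(\g)$ generates $U(\g)$ as an algebra. The main obstacle throughout is the well-definedness of the $U(\g)$-module $M$ in the PBW step; everything else is bookkeeping with the $\Gamma$-grading, the filtration and the bicharacter axioms.
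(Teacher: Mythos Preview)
Your proposal is a correct and careful sketch of the standard argument, essentially Scheunert's original proof. Note, however, that the paper itself does not supply a proof of this theorem at all: it is stated with a citation to \citep{scheunert1} and left without argument, so there is nothing in the paper to compare your approach against beyond the fact that you are reconstructing precisely the cited source.
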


In fact one has the structure of a color Hopf algebra on $U(\g)$. 
The coproduct is given by $\Delta(x) = x \otimes 1 + 1 \otimes x$ for $x \in \g \subset U(\g)$ and extended to the rest of  $U(\g)$ by the universal property stated in the theorem above.

The Lie algebra $\g$ is precisely formed by the primitive elements in $U(\g)$, where an element $x \in U(\g)$ is called primitive if it satisfies $\Delta(x) = 1\otimes x +x \otimes 1$.

\subsection{Color Lie bialgebras}

For the definition of color Lie bialgebras, we first need to define the adjoint action on tensor powers of $\g$. Let $a\in \g$ and $x \otimes y \in \g \otimes \g$ then, we set 
\begin{equation}
  a \cdot ( x\otimes y) = (a \cdot x) \otimes y + \epsilon(a,x) x \otimes (a \cdot y).
\end{equation}
This can be generalized to higher tensor products.

\begin{defn}[Color Lie bialgebra]
A color Lie bialgebra is a color Lie algebra $\g$ with a color antisymmetric  cobracket $\delta: \g \rightarrow \g \otimes \g$ of degree 0, such that the compatibility condition
\begin{align}
\delta( [a,b]) &= a \cdot \delta(b) - \epsilon(a,b) b \cdot \delta(a)
\end{align}
holds and $\delta$ satisfies the co-Jacobi identity given by
 \begin{equation} \label{eq:cojac}
(\id + \sigma + \sigma^2)(\delta \otimes \id)\delta =0.
 \end{equation}
\end{defn}

It is called quasitriangular, if there exits an $r \in \g \otimes \g$, such that $r + \tau(r)$ is $\g$ invariant and satisfies
$\operatorname{CYB}(r)=0$
 with 
\begin{equation}\label{eq:yb}
\operatorname{CYB} = [r_{12},r_{13}]+[r_{12},r_{23}]+[r_{13},r_{23}].
\end{equation}
It is called triangular if in addition, $r = -\tau(r)$.

\begin{defn}[Color Manin triple]
A  color Manin triple is a triple $(\mathfrak{p},\mathfrak{p}_+,\mathfrak{p}_-)$, where $\mathfrak{p}$ is a color Lie algebra, $\mathfrak{p}_\pm$  are color Lie subalgebras of $\mathfrak{p}$ and $\mathfrak{p} = \mathfrak{p}_+ \oplus \mathfrak{p}_-$ as color vector spaces, with a non-degenerate invariant symmetric inner product $(\cdot,\cdot)$ on $\mathfrak{p}$, such that $\mathfrak{p}_\pm$ are isotropic, i.e. $(\mathfrak{p}_\pm,\mathfrak{p}_\pm)=0$.
Invariant here means that
\begin{equation}
([a,b],c) + \epsilon(a,b) (b,[a,c]) =0,
\end{equation}
and  symmetric means that  
\begin{equation}
(a,b) = \epsilon(a,b)(b,a).
\end{equation}
\end{defn}
Note that the invariance can also be written as
\begin{equation}
([b,a],c) = ( b,[a,c]).
\end{equation}

\begin{theorem}
Let $\g$ be a color Lie bialgebra and  set $\mathfrak{p_+}= \g, \mathfrak{p_-}= \g^*$ and $\mathfrak{p}= \mathfrak{p_+} \oplus \mathfrak{p_-}$. Then $(\mathfrak{p},\mathfrak{p_+},\mathfrak{p_-})$ is  a color Manin triple. Conversely any finite-dimensional color Manin triple $\mathfrak{p}$ gives rise to Lie bialgebra structure on $\mathfrak{p_+}$.  
\end{theorem}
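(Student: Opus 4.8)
The plan is to prove the two assertions separately, both resting on the color version of the classical double construction.

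\textbf{From a bialgebra to a Manin triple.} Given a color Lie bialgebra $\g$, I would grade the dual by $(\g^*)_g := (\g_{-g})^*$, so that the evaluation pairing $\g \otimes \g^* \to \K$ has degree $0$, and set $\mathfrak{p} := \g \oplus \g^*$, $\mathfrak{p}_+ := \g$, $\mathfrak{p}_- := \g^*$. Dualizing the cobracket $\delta$ yields a degree-$0$ map $\g^* \otimes \g^* \to \g^*$; the color antisymmetry of $\delta$ together with the co-Jacobi identity \eqref{eq:cojac} say precisely that this map is a color Lie bracket on $\g^*$, while dually the bracket of $\g$ equips $\g^*$ with a coadjoint structure. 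On $\mathfrak{p}$ I would then declare $\g$ and $\g^*$ to be color Lie subalgebras and, for homogeneous $x \in \g$ and $\xi \in \g^*$, define the mixed bracket by $[x,\xi] := \ad^*_x \xi - \epsilon(x,\xi)\,\ad^*_\xi x \in \g^* \oplus \g$, where $\ad^*$ denotes the coadjoint actions of $\g$ on $\g^*$ and of $\g^*$ on $\g$, each built from the pairing via the Koszul rule (the exact placement of the $\epsilon$-factors being part of what the proof must pin down). The inner product is the evaluation pairing between $\g$ and $\g^*$, extended by an $\epsilon$-factor so as to be color-symmetric and declared to vanish on $\g\times\g$ and on $\g^*\times\g^*$; it is then non-degenerate, and $\mathfrak{p}_\pm$ are isotropic by construction.

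\textbf{The substantive checks.} What remains is to verify (i) the color Jacobi identity for $[\cdot,\cdot]$ on $\mathfrak{p}$ and (ii) invariance of $(\cdot,\cdot)$. For (i), the Jacobiator on triples drawn entirely from $\g$ vanishes because $\g$ is a color Lie algebra, the Jacobiator on triples drawn entirely from $\g^*$ vanishes by the co-Jacobi identity for $\delta$, and on mixed triples it reduces, after expanding the coadjoint actions and collecting commutation factors, exactly to the compatibility condition $\delta([a,b]) = a\cdot\delta(b) - \epsilon(a,b)\,b\cdot\delta(a)$. For (ii), invariance is immediate on homogeneous triples lying in $\g$ or in $\g^*$, and on mixed triples it again unwinds to the same compatibility condition. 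I expect this bookkeeping to be the main obstacle: it is not conceptually deep, but each application of the Koszul rule produces an $\epsilon$-factor, and the cancellations only go through if the factors in $[x,\xi]$, in the coadjoint actions, and in the inner product are normalized consistently — fixing these conventions is the delicate part.

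\textbf{From a Manin triple to a bialgebra.} Conversely, given a finite-dimensional color Manin triple $(\mathfrak{p},\mathfrak{p}_+,\mathfrak{p}_-)$, non-degeneracy of $(\cdot,\cdot)$ together with the isotropy of the two summands identifies $\mathfrak{p}_-$ with $\mathfrak{p}_+^*$ as color vector spaces via $\phi \mapsto (\phi,\cdot)$; finite-dimensionality is precisely what makes this identification, and the dualization below, legitimate. Transporting the bracket of $\mathfrak{p}_-$ through this isomorphism and dualizing gives a degree-$0$ map $\delta\colon \mathfrak{p}_+ \to \mathfrak{p}_+ \otimes \mathfrak{p}_+$. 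Color antisymmetry of $\delta$ and the co-Jacobi identity follow by dualizing the color antisymmetry and color Jacobi identity of the bracket on $\mathfrak{p}_-$, and the compatibility condition follows by pairing the color Jacobi identity of $\mathfrak{p}$ on a mixed triple against an element of $\mathfrak{p}_-$ and using invariance of $(\cdot,\cdot)$ — that is, by reversing the computation in steps (i) and (ii). A final routine check shows that the two constructions are mutually inverse once the sign and $\epsilon$ conventions are fixed.
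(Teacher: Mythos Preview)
Your outline is correct and follows the standard direct route: build the double bracket on $\mathfrak{p}=\g\oplus\g^*$ via the two coadjoint actions, take the evaluation pairing as the inner product, and then verify case by case that the color Jacobi identity and invariance reduce to the axioms of a color Lie bialgebra (antisymmetry, co-Jacobi, and the cocycle compatibility). The honest caveat you raise --- that the whole content lies in getting the $\epsilon$-factors in $[x,\xi]$, in $\ad^*$, and in the pairing mutually consistent --- is exactly right, and one should note that the mixed Jacobi check really splits into two subcases, $(\g,\g,\g^*)$ and $(\g,\g^*,\g^*)$, each of which unwinds to the compatibility condition (for $\g$ and for $\g^*$ respectively, these being equivalent).

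As for comparison: the paper does not actually prove this theorem in the text. It states the result and then, after the subsequent proposition on the double, refers the reader to \cite{colorlie} for explicit proofs. That reference develops the \emph{big bracket} formalism for color Lie bialgebras, in which $\mathfrak{p}$, its bracket, and the bialgebra axioms are all encoded as a single degree-$1$ element $\mu$ of a graded Poisson algebra satisfying $\{\mu,\mu\}=0$; the Manin-triple correspondence then falls out structurally rather than by case analysis. Your approach is more elementary and self-contained, at the cost of the sign bookkeeping you anticipate; the big-bracket approach hides the $\epsilon$-factors inside the Koszul-rule machinery once and for all, at the cost of setting up that machinery.
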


In  the following  let $\{x_i\}_i$ be a basis of  $\g$ and $\{\alpha^i\}_i$ be the corresponding dual basis of $\g^*$, i.e. $\alpha^i(x_j) = \delta^i_j$, where $\delta$ is the Kronecker delta.

\begin{prop}[Double]
If $\mathfrak{p}$ is finite dimensional, there is also the structure of a color Lie bialgebra on $\mathfrak p$ given by the $r$-matrix $r =x_i \otimes \alpha^i $.
For this we have $\delta(x) = x \cdot r = [x,x_i] \otimes \alpha^i + \epsilon(i,x) x_i \otimes [x,\alpha^i]$. This  color Lie bialgebra is called the double of $\g$, and it is denoted  by $D(\g)$.
\end{prop}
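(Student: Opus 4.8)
The plan is to realise the asserted structure as the quasitriangular structure attached to the element $r$, so that the real content reduces to two checks. Throughout, $\mathfrak{p}=\g\oplus\g^{*}$ denotes the color Lie algebra together with the non-degenerate invariant symmetric bilinear form $(\cdot,\cdot)$ of the color Manin triple of the preceding theorem; under it $\g$ and $\g^{*}$ are isotropic and $(\cdot,\cdot)$ restricts to the evaluation pairing $\g\times\g^{*}\to\K$. Since $\abs{\alpha^{i}}=-\abs{x_{i}}$, the element $r=\sum_{i}x_{i}\otimes\alpha^{i}$ is homogeneous of degree $0$, the sum being finite as $\mathfrak{p}$ is finite-dimensional, and
\begin{equation*}
t:=r+\tau(r)=\sum_{i}\bigl(x_{i}\otimes\alpha^{i}+\epsilon(x_{i},\alpha^{i})\,\alpha^{i}\otimes x_{i}\bigr)
\end{equation*}
is precisely the Casimir element of $(\cdot,\cdot)$; in particular neither $r$ nor $t$ depends on the chosen basis. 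I claim it suffices to verify that (i) $t$ is $\mathfrak{p}$-invariant and (ii) $\operatorname{CYB}(r)=0$. Granting this, one sets $\delta(a):=a\cdot r$ for $a\in\mathfrak{p}$; this map has degree $0$, and the stated formula $\delta(x)=[x,x_{i}]\otimes\alpha^{i}+\epsilon(i,x)\,x_{i}\otimes[x,\alpha^{i}]$ for $x\in\g$ is simply the generalized adjoint action of $x$ on $r$ written out.

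Next I would deduce the color Lie bialgebra axioms for $(\mathfrak{p},[\cdot,\cdot],\delta)$ from (i) and (ii). The cocycle compatibility $\delta([a,b])=a\cdot\delta(b)-\epsilon(a,b)\,b\cdot\delta(a)$ is immediate from $\delta(a)=a\cdot r$ once one notes that $a\mapsto(a\cdot{-})$ is a color Lie algebra representation on $\mathfrak{p}\otimes\mathfrak{p}$ (it is the tensor square of the adjoint representation in the symmetric monoidal category $\grVec$), so that $[a,b]\cdot r=a\cdot(b\cdot r)-\epsilon(a,b)\,b\cdot(a\cdot r)$. Antisymmetry of $\delta$ follows from (i): the flip $\tau$ is $\mathfrak{p}$-equivariant, hence $\delta(a)+\tau(\delta(a))=a\cdot(r+\tau(r))=a\cdot t=0$. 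Finally, the co-Jacobi identity follows from (ii): expanding exactly as in the classical case but carrying the Koszul signs one obtains an identity of the form $(\id+\sigma+\sigma^{2})(\delta\otimes\id)\delta(a)=-\,a\cdot\operatorname{CYB}(r)$, which vanishes by (ii). Thus $(\mathfrak{p},[\cdot,\cdot],\delta)$ is a color Lie bialgebra, quasitriangular with $r$-matrix $r$ by (i) and (ii); this is the double $D(\g)$.

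For (i), I would use that invariance of the Casimir element of a non-degenerate invariant symmetric form is automatic: writing $a\cdot e_{k}$ and $a\cdot e^{k}$ in a basis $\{e_{k}\}$ of $\mathfrak{p}$ and its $(\cdot,\cdot)$-dual basis $\{e^{k}\}$ and using $([a,e_{k}],e^{l})=-\epsilon(a,e_{k})(e_{k},[a,e^{l}])$ gives, up to the usual commutation factors, $\sum_{k}(a\cdot e_{k})\otimes e^{k}=-\sum_{k}\epsilon(a,e_{k})\,e_{k}\otimes(a\cdot e^{k})$, i.e.\ $a\cdot t=0$; specializing to the basis $\{x_{i}\}\cup\{\alpha^{i}\}$, whose $(\cdot,\cdot)$-dual basis is $\{\alpha^{i}\}\cup\{x_{i}\}$ up to the signs $\epsilon(x_{i},\alpha^{i})$ because $\g,\g^{*}$ are isotropic and $(\cdot,\cdot)$ is the evaluation pairing on $\g\times\g^{*}$, recovers $t$ above. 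For (ii), I would expand $\operatorname{CYB}(r)=[r_{12},r_{13}]+[r_{12},r_{23}]+[r_{13},r_{23}]$ in $\mathfrak{p}^{\otimes3}$; since $r\in\g\otimes\g^{*}$ and in each bracket $[r_{ab},r_{cd}]$ only one slot of each factor carries a non-unit, one finds $[r_{12},r_{13}]\in\g\otimes\g^{*}\otimes\g^{*}$, $[r_{13},r_{23}]\in\g\otimes\g\otimes\g^{*}$, while $[r_{12},r_{23}]$ splits according to $[\g^{*},\g]\subset\g\oplus\g^{*}$ into a $\g\otimes\g\otimes\g^{*}$- and a $\g\otimes\g^{*}\otimes\g^{*}$-part. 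Rewriting the mixed brackets $[x,\alpha]$ through $\ad$ (on the $\g^{*}$-component) and through $\delta$ of $\g$ (on the $\g$-component, using that the bracket of $\g^{*}$ is dual to $\delta$) and matching the resulting two homogeneous components of $\operatorname{CYB}(r)$, their vanishing becomes equivalent to the color Jacobi identity of $\g$, the co-Jacobi identity of $\delta$, and the cocycle compatibility between them, that is, exactly the axioms satisfied by the color Lie bialgebra $\g$. The main obstacle will be precisely this final bookkeeping: carrying the commutation factors $\epsilon$ consistently through the Koszul convention and through the explicit description of the bracket of $\mathfrak{p}$, and checking that the identities one extracts from $\operatorname{CYB}(r)=0$ are exactly those axioms and nothing stronger.
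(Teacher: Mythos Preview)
The paper does not prove this proposition at all; immediately after stating it the authors write ``For explicit proofs, see e.g.\ \cite{colorlie}'' and move on. Your proposal supplies exactly the standard argument one expects to find in that reference: check that the symmetrisation $t=r+\tau(r)$ is the Casimir of the invariant form on $\mathfrak{p}$ and hence $\mathfrak{p}$-invariant, check $\operatorname{CYB}(r)=0$, and then deduce skew-symmetry, the cocycle condition, and co-Jacobi for $\delta(a)=a\cdot r$ in the usual way. This is correct and is the route taken in the non-graded case by Etingof--Schiffmann and Chari--Pressley; nothing changes except the Koszul signs.

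Two small remarks on the bookkeeping you flag at the end. First, the identity $(\id+\sigma+\sigma^{2})(\delta\otimes\id)\delta(a)=-\,a\cdot\operatorname{CYB}(r)$ as written holds on the nose only when $r$ is skew; for general $r$ one picks up extra terms involving $t$, which vanish here precisely because $t$ is invariant, so be sure to record that cancellation rather than assert the clean identity. Second, for $\operatorname{CYB}(r)=0$ it is slightly cleaner to argue directly from the Jacobi identity on $\mathfrak{p}$ together with the invariance of the form (both already available from the preceding theorem on Manin triples), rather than to unwind the mixed brackets $[\alpha^{i},x_{k}]$ all the way back to the individual bialgebra axioms of $\g$; either organisation works, but the former avoids re-proving what the Manin-triple theorem has already packaged for you.
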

For explicit proofs, see e.g. \citep{colorlie}.

On $D(\g)$ we define the Casimir element $\Omega = r + \tau \circ r$, which is  invariant, i.e. $x \cdot \Omega = 0$ for all $x \in \g$.

We consider the category of all $(\Gamma,\epsilon)$ color Lie bialgebras $\mathbf{LBA}$. 
The morphisms are given by the graded morphisms, which preserve the color Lie bracket and cobracket. Note that this category is not enriched over $\grVec$, since the sum of two morphisms is in general not a morphism again.
It is also not necessary to consider graded morphisms, since for a homogeneous $\phi$ from $\phi([x,y])=[\phi(x),\phi(y)]$ we get $\deg(\phi) + \deg(\phi) = \deg(\phi)$ so we must have $\deg(\phi)=0$. So this can be considered as an ordinary category with morphism only of degree $0$.

\subsection{Topologically free modules}

For the quantization we  need $\K\ph$-modules, where $\K\ph$ denotes the ring of formal power series over $\K$.
A $\K\ph$-module is called topologically free if it is isomorphic to one of the form $V \otimeshat \K\ph$. Given a $\K$-module $V$, we will simply denote it by $V\ph$.
Here  $\otimeshat$ denotes the completed tensor product with respect to the filtration  by $\lambda$ or the $\lambda$-adic topology.

A graded $\K\ph$-module $V$ is called free, if all $V_i$ are free. 
In general this is not equivalent to the statement that $\bigoplus_{i \in \Gamma} V_i$ is free. But it is equivalent if only finitely many $V_i$ are nonzero.

\subsection{Quasitriangular color quasi-Hopf algebras}

\begin{defn}[Color quasi-Hopf algebra]
A color  quasi-Hopf algebra is an  associative color algebra $H$ with a multiplication $\mu$, a coproduct $\Delta$, a unit $1$, a counit $\varepsilon$ and  an  invertible  associator $\Phi \in H^{\otimes 3}$  all of degree $0$, which satisfy 
\begin{align*}
\forall x,y \in  H : \Delta(xy) &=\Delta(x) \Delta(y) && \text{(compatibility)}, \\
(\varepsilon \otimes \id)\Delta &= \id = (\id \otimes \varepsilon) \Delta && \text{(counit)}, \\
\Phi (\Delta \otimes \id ) \Delta &= (\id \otimes \Delta)\Delta \Phi && \text{(quasi- coassociativity)},\\
\Phi_{1,2,34} \Phi_{12,3,4} &=\Phi_{2,3,4}\Phi_{1,23,4}\Phi_{1,2,3} && \text{(Pentagon identity)},\\
(\id \otimes \epsilon \otimes  \id) \Phi &= 1 \otimes 1.
\end{align*}
Here we used the shorthand notation $\Phi_{1,23,4} = (\id \otimes \Delta \otimes \id )\Phi$ , $\Phi_{2,3,4} = 1 \otimes \Phi$ and similar for the others. 
It also has an antipode $S$, which satisfies $$\mu (\id \otimes S) \Delta = \id =\mu (\id \otimes S) \Delta.$$
\end{defn}
One could also allow for left and right unitors, but we will not do so. 

Note that since these equations use the product in $H \otimes H$, defined by $( a\otimes b)( c\otimes d) = \epsilon(b,c) ac \otimes bd$ for $a,b,c,d \in H$, they depend on the commutation factor $\epsilon$.

We assume also the operations to be of degree zero for two reasons, it is easier to handle this way categorically, and actually all operations have to be of degree zero, because they respect the unit. 

A color  quasi-Hopf algebra, where $\Phi = 1 \otimes 1 \otimes 1$ is simply called a color Hopf algebra.
A color quasi-Hopf algebra $H$ is called quasitriangular, if there exists also an $r$-matrix $R \in H^{\otimes 2}$, of degree $0$, such that 
\begin{align}
(\id \otimes \Delta)R &= \Phi^{-1}_{231} R_{13}\Phi_{213} R_{12} \Phi_{123}^{-1}, \\
(\Delta \otimes \id)R &= \Phi_{312} R_{13}\Phi_{132}^{-1} R_{23} \Phi_{123},  \label{eq:quasitri} \\
R \Delta^\opp &=  \Delta R.
\end{align}
Here $\Delta^\opp(x) = \tau \Delta(x)$,  $\Phi_{312} = \tau_{H,H \otimes H} \Phi$ and similar for other permutations.
It is called triangular if $R_{21} R  = \id$.

From \cref{eq:quasitri} and the fact the $R$ is invertible, it follows that $(\epsilon \otimes \id)R = 1 = (\id \otimes \epsilon) R$, so $R$ is automatically of degree 0.

Two quasitriangular quasi-Hopf algebras $H$ and $H'$ are called twist equivalent if there exists an invertible element  $J \in H^{\otimes 2}$ of degree $0$ and an algebra isomorphism $\theta: H \to H'$, such that 
\begin{align*}
(\epsilon \otimes \id)J &= 1 = ( \id \otimes \epsilon) J, \\
\Delta' &= J^{-1} ((\theta \otimes \theta) \Delta \theta^{-1}(x))J, \\
\Phi' &=  J_{2,3}^{-1} J_{1,23}^{-1} \theta(\Phi) J_{12,3} J_{1,2}, \\
R' &= J_{21}^{-1} R J.
\end{align*}

In the following we will be mostly interested in the case, where $\theta$ is the identity. 

If $J$ satisfies the first identity above one can define a new  twist equivalent quasitriangular quasi-Hopf algebra by using the identities as definitions for $\Delta',\mu'$ and $R'$.

\begin{theorem}
The category of  modules  over a quasitriangular color quasi-Hopf algebra is a braided monoidal category, enriched over $\grVec$. 

And if two Hopf algebras are twist equivalent, the category of modules are tensor equivalent, i.e. it exists an invertible monoidal functor between them.
\end{theorem}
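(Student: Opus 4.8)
The plan is to build the braided monoidal structure on the module category directly from the defining data of the quasitriangular color quasi-Hopf algebra $H$, checking the coherence axioms by translating the Hopf-algebraic identities into natural-transformation identities, and then to transport structure along a twist. First I would fix the underlying category: a module over $H$ means an $H$-module in $\grVec$, so $\operatorname{Hom}$-spaces are graded (the internal $\operatorname{Homgr}$ of $\grVec$, restricted to $H$-linear maps), and this is what is meant by enrichment over $\grVec$. The tensor product of two $H$-modules $M,N$ is $M\otimes N$ with the $H$-action $x\cdot(m\otimes n)=\Delta(x)(m\otimes n)$, where the right-hand side is computed using the braided product on $H^{\otimes 2}$, i.e.\ with the Koszul sign coming from $\epsilon$; compatibility $\Delta(xy)=\Delta(x)\Delta(y)$ is exactly what makes this an $H$-module, and degree-$0$-ness of $\Delta$ makes the action a morphism in $\grVec$. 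The unit object is $\K$ with action through $\varepsilon$.

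Next I would produce the structural isomorphisms. The associator $a_{M,N,P}\colon (M\otimes N)\otimes P\to M\otimes(N\otimes P)$ is defined as the underlying associator of $\grVec$ followed by the action of $\Phi\in H^{\otimes 3}$; quasi-coassociativity $\Phi(\Delta\otimes\id)\Delta=(\id\otimes\Delta)\Delta\,\Phi$ is precisely the statement that $a_{M,N,P}$ is $H$-linear, and the pentagon identity for $\Phi$ gives the pentagon axiom for $a$. The unit constraints come from $(\id\otimes\varepsilon\otimes\id)\Phi=1\otimes1$, which forces the relevant components of $\Phi$ to be trivial and makes the left/right unitors of $\grVec$ work $H$-linearly; the triangle axiom then reduces to the triangle axiom in $\grVec$. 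For the braiding I would set $c_{M,N}\colon M\otimes N\to N\otimes M$ equal to $\tau_{M,N}$ (the symmetry of $\grVec$) composed with the action of $R_{21}$ — or, equivalently, $\tau$ after $R$; the axiom $R\Delta^{\mathrm{opp}}=\Delta R$ says exactly that this is $H$-linear, and the two hexagon identities are the images, under "act on $M\otimes N\otimes P$", of the two quasitriangularity relations $(\id\otimes\Delta)R=\Phi^{-1}_{231}R_{13}\Phi_{213}R_{12}\Phi^{-1}_{123}$ and $(\Delta\otimes\id)R=\Phi_{312}R_{13}\Phi^{-1}_{132}R_{23}\Phi_{123}$. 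Invertibility of $R$ gives invertibility of $c$. This establishes the first assertion.

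For the second assertion I would take a twist $J\in H^{\otimes2}$ with $(\varepsilon\otimes\id)J=1=(\id\otimes\varepsilon)J$ (taking $\theta=\id$ as announced, the general $\theta$ case being the evident decoration) and define a functor $F$ on the module categories which is the identity on objects and morphisms but equips $F(M)\otimes F(N)$ with the tensoring structure via the monoidal-structure natural transformation $\phi_{M,N}\colon F(M)\otimes F(N)\to F(M\otimes N)$ given by the action of $J$ on $M\otimes N$. That $\phi_{M,N}$ is $H$-linear for the twisted coproduct $\Delta'=J^{-1}\Delta(\,\cdot\,)J$ is immediate; invertibility of $J$ makes $\phi$ invertible; and the hexagon/associativity compatibility that $(F,\phi)$ must satisfy is exactly the twist formula $\Phi'=J^{-1}_{2,3}J^{-1}_{1,23}\Phi\,J_{12,3}J_{1,2}$, while the unit conditions on $J$ give compatibility with unitors, and $R'=J^{-1}_{21}RJ$ gives compatibility with braidings. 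Hence $F$ is a braided monoidal equivalence; its inverse is built from $J^{-1}$.

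The main obstacle is bookkeeping rather than conceptual: every identity above must be read with the Koszul sign rule in force, since the product on $H^{\otimes n}$ carries $\epsilon$-signs, so one must be careful that $R_{13}$, $\Phi_{213}$, etc., are interpreted as graded-permuted elements (as the excerpt specifies, $\Phi_{312}=\tau_{H,H\otimes H}\Phi$ and so on) and that the "act on a tensor product of modules" map is a homomorphism from $H^{\otimes n}$ with the braided product to $\operatorname{Homgr}$ of the tensor-product module, again with $\epsilon$-signs. Once one checks this single fact — that applying elements of $H^{\otimes n}$ to $M_1\otimes\cdots\otimes M_n$ is a morphism of $\epsilon$-braided associative algebras into $\operatorname{End}_{\grVec}(M_1\otimes\cdots\otimes M_n)$ — all the hexagon and pentagon verifications become formal consequences of the corresponding identities in $H^{\otimes n}$, and no genuinely new computation is needed beyond those already implicit in the classical (Drinfeld) and super (Geer) cases.
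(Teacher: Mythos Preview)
Your proposal is correct and follows the standard Tannakian-style argument: build the associator from $\Phi$, the braiding from $\tau\circ R$, and verify coherence by pushing the pentagon, hexagon, and twist identities in $H^{\otimes n}$ forward along the action map, with the Koszul rule handled by the single observation that the action $H^{\otimes n}\to\End_{\grVec}(M_1\otimes\cdots\otimes M_n)$ is a morphism of $\epsilon$-graded algebras.

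The paper itself states this theorem without proof; it is treated as a well-known background fact carried over from the ungraded and super cases. Your write-up is exactly the argument one would supply if asked to fill in the details, and there is nothing to compare against. One small remark: your phrase ``$\tau_{M,N}$ composed with the action of $R_{21}$'' is slightly ambiguous about order and target; it is cleaner to fix once and for all $c_{M,N}=\tau_{M,N}\circ(R\,\cdot\,)$ and then check that $R\Delta=\Delta^{\mathrm{opp}}R$ (equivalently $R\Delta^{\mathrm{opp}}=\Delta R$ after conjugating) gives $H$-linearity, rather than offering two descriptions. Otherwise the plan is complete.
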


There is a category of ((quasi)-triangular) color  quasi-Hopf algebras, where the morphisms preserve the product, coproduct, unit, counit and the associator. In the case of (quasi-)triangular-Hopf algebras they also preserve the $r$-matrix. Since the morphism satisfies $\phi(1)=1$ they have to be of degree 0 if they are homogeneous.  So this category is only an ordinary one.

\subsection{Quantized universal enveloping algebras}

\begin{defn}[QUEA]
Let $H$ be a topological free  color Hopf $\K\ph$-algebra. Then it is called a quantized universal enveloping algebra, if $\factor{H}{\lambda H} \cong U(\g)$, for a color Lie algebra $\g$.  If $\g$ is a color Lie bialgebra then  $H$ is a quantization of it  if in addition 
\begin{equation}
\delta = \frac{1}{\lambda} (\Delta -\Delta^\opp) \mod \lambda.
\end{equation}
\end{defn}

 Let $H$ be a quantization of a quasitriangular color Lie bialgebra. Then $(H,R)$ is called a quasitriangular quantization if $(H,R)$ is a quasitriangular Hopf algebra and  $ R \equiv 1 \otimes 1 + \lambda r \mod \lambda^2 $.


\section{Drinfeld category}

\subsection{Associator}

Following \citep{ek1}, let $T_n$ be the algebra over $\K$ generated by symmetric elements $t_{ij}$ for $1\leq i,j \leq n$, satisfying the relations $t_{ij}= t_{ji}$, $[t_{ij},t_{lm}]=0$ if $i,j,l,m$ are distinct and $[t_{ij},t_{ik}+ t_{jk}] =0$.
For disjoint sets $P_1, \dots, P_n \subset \{1,\ldots,m\}$, there exists a unique homomorphism $T_n \to T_m$ defined on generators by $ t_{ij} \mapsto \sum_{p \in P_i, q \in P_j} t_{pq}$. We denote it by $X \mapsto X_{P_1,\dots, P_n}$.
For $\Phi \in T_3$, the relation 
\begin{equation}
  \Phi_{1,2,34}\Phi_{12,3,4}=\Phi_{2,3,4}\Phi_{1,23,4}\Phi_{1,2,3}
\end{equation}is called the pentagon relation, and for $ R = e^{\frac{\lambda}{2}t_{12}} \in T_2\ph$ the relations
\begin{align}
  R_{12,3} = \Phi_{3,1,2}R_{1,3}\Phi^{-1}_{1,2,2}R_{2,3}\Phi_{1,2,3}
  R_{1,23} = \Phi^{-1}_{2,3,1}R_{12,3}\Phi_{2,1,3}R_{12,3}\Phi^{-1}_{1,2,3}
\end{align}
are called the hexagon relations.

There is the following well known theorem due to Drinfeld \citep{drinfeld1}:
\begin{theorem}
There exists an associator over $\Q$.
\end{theorem}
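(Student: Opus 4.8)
## Proof Proposal

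The plan is to reduce the existence of a rational associator to the classical Knizhnik–Zamolodchikov (KZ) construction, which is the route taken by Drinfeld in \citep{drinfeld1} and reproduced in \citep{ek1}. The associator $\Phi$ must live in $T_3\ph$, satisfy the pentagon relation in $T_4\ph$, the two hexagon relations together with $R=e^{\frac{\lambda}{2}t_{12}}$, and have the normalization $(\id\otimes\varepsilon\otimes\id)\Phi = 1$ together with the unit conditions in the other two slots; since we only want existence over $\Q$, we need all these equations solvable with coefficients in $\Q$ rather than merely in $\C$ or $\K$.

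First I would recall the analytic construction: consider the KZ connection $\nabla = d - \lambda\bigl(\frac{t_{12}}{z}+\frac{t_{23}}{z-1}\bigr)dz$ on the three-punctured sphere, valued in $T_3$. Its solutions have prescribed asymptotic behaviour $G_0(z)\sim z^{\lambda t_{12}}$ as $z\to 0$ and $G_1(z)\sim (1-z)^{\lambda t_{23}}$ as $z\to 1$ (on the interval $(0,1)$), and one defines the KZ associator $\Phi_{\mathrm{KZ}}$ as the transition matrix $G_0 = G_1\Phi_{\mathrm{KZ}}$. Flatness of $\nabla$, which is equivalent to the infinitesimal braid relations $[t_{ij},t_{ik}+t_{jk}]=0$ defining $T_n$, forces $\Phi_{\mathrm{KZ}}$ to satisfy the pentagon relation (via the five-punctured sphere / the $n=4$ configuration space) and the hexagons (via monodromy around the punctures, giving the braiding $R=e^{\frac{\lambda}{2}t_{12}}$). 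The normalization conditions hold because setting one of the $t_{ij}$ blocks to zero degenerates the connection appropriately. This already yields an associator over $\C$.

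The key step — and the main obstacle — is descending from $\C$ to $\Q$. The coefficients of $\Phi_{\mathrm{KZ}}$ are (regularized) multiple zeta values, which are transcendental, so $\Phi_{\mathrm{KZ}}$ itself is not defined over $\Q$. Drinfeld's argument is that the \emph{scheme} of associators (solutions of pentagon, hexagons, normalization) is a torsor under a prounipotent group defined over $\Q$; since it has a $\C$-point, a standard descent argument for torsors under unipotent/prounipotent groups (which are cohomologically trivial — $H^1$ vanishes, and in fact a torsor under a split unipotent $\Q$-group with a $\bar\Q$-point has a $\Q$-point because the equations are polynomial and one can solve degree by degree, each stage being an affine space over $\Q$) shows it must also have a $\Q$-point. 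Concretely: one solves the defining equations order by order in $\lambda$; at each order the equation for the new term is an inhomogeneous linear equation over $\Q$ whose solution set, given that the lower-order terms were chosen in $\Q$, is a nonempty affine subspace of a $\Q$-vector space (nonempty because it is nonempty over $\C$), hence has a $\Q$-point. Iterating through all orders produces $\Phi \in T_3 \otimes \Q\ph$.

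Two remarks on what must be checked carefully. One must verify that the whole system (pentagon, hexagons, normalization) is consistent at each order — i.e. that the obstruction to lifting vanishes — and this is precisely guaranteed by the existence of the $\C$-solution $\Phi_{\mathrm{KZ}}$: an obstruction in a $\Q$-vector space that vanishes after $\otimes_\Q\C$ already vanishes over $\Q$. One also needs that the relevant graded pieces of $T_3$ and $T_4$ are finite-dimensional $\Q$-vector spaces so that "affine subspace" makes sense and the induction is well-posed; this follows from $T_n$ being a finitely generated algebra with the $t_{ij}$ in degree $1$, so each graded component is finite-dimensional over $\Q$. Assembling these observations gives the theorem; the detailed order-by-order bookkeeping and the explicit analytic estimates for $\Phi_{\mathrm{KZ}}$ are exactly the routine parts I would not spell out, referring instead to \citep{drinfeld1} and \citep{ek1}.
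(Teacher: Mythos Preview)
Your sketch is essentially Drinfeld's original argument and is correct in outline: construct the KZ associator analytically over $\C$, then descend to $\Q$ either by the torsor interpretation under the (prounipotent, $\Q$-defined) Grothendieck--Teichm\"uller group or, equivalently, by solving the pentagon/hexagon/normalization constraints order by order in $\lambda$, using that a system of $\Q$-linear equations solvable over $\C$ is already solvable over $\Q$. The finite-dimensionality of each graded piece of $T_n$ is what makes the induction well-posed, as you note.

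The comparison with the paper is simply that the paper does not prove this statement at all: it records the theorem as a well-known result of Drinfeld, cites \cite{drinfeld1}, and moves on to use it. So your proposal supplies strictly more than the paper does. If you intend to include a proof in your own write-up, what you have is an accurate roadmap of Drinfeld's argument; the parts you flag as ``routine'' (the analytic estimates establishing the asymptotics of $G_0,G_1$ and hence the pentagon/hexagon for $\Phi_{\mathrm{KZ}}$, and the explicit inductive bookkeeping) are genuinely where the work lies, and for a self-contained treatment you would need to either carry them out or cite \cite{drinfeld1} at those points, which is effectively what the paper does wholesale.
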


This means that there is also an associator for every field, wich includes the rational numbers. In the following we will fix such an associator for $\K$.


For a color Lie algebra $\g$, with a symmetric invariant element $\Omega= \Omega_1 \otimes \Omega_2$, we can define a map from $T_n$ to $\End(M_1 \otimes \dots \otimes M_n)$, by setting $t_{ij} \mapsto \Omega_{ij}$. Here $\Omega_{ij}$ is $1 \otimes \dots \otimes \Omega_1 \otimes \dots \otimes \Omega_2 \otimes \dots \otimes 1$ with the components of $\Omega$ in the $i$-th and $j$-th factor in the tensor product. If $i>j$, we have that $\Omega_{ij} = \tau \Omega_{ji}$. So the $\Omega_{ij}$ satisfy the same relations as the $t_{ij}$ since $\Omega$ is invariant. As in the non-color case we get the following theorem.
 
\begin{theorem}
We get a quasitriangular  color quasi-Hopf algebra $(U(\g)\ph ,\Delta,\varepsilon,\Phi,R)$, which we denote by $A_{\g,t}$.
\end{theorem}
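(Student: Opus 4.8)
The plan is to mimic the classical Drinfeld construction, using the map $\rho_n\colon T_n \to \End(M_1\otimes\cdots\otimes M_n)$, $t_{ij}\mapsto \Omega_{ij}$, and transporting the associator $\Phi$ and $R=e^{\frac{\lambda}{2}t_{12}}$ along it, but being careful that every identity now holds in the \emph{color} sense, i.e.\ with respect to the braiding $\tau$ built from $\epsilon$. First I would verify that $\rho_n$ is well defined: the generators $\Omega_{ij}$ satisfy $\Omega_{ij}=\tau\Omega_{ji}$ because $\Omega$ is symmetric, they commute when $i,j,k,l$ are pairwise distinct (the two copies act on disjoint tensor factors, and crossing them introduces a factor $\epsilon(\cdot,\cdot)\epsilon(\cdot,\cdot)=1$ by antisymmetry of the bicharacter on the homogeneous pieces of $\Omega$), and the infinitesimal braid relation $[\Omega_{12},\Omega_{13}+\Omega_{23}]=0$ is exactly the $\g$-invariance of $\Omega$ spelled out on $M_1\otimes M_2\otimes M_3$ — this is the color analogue of the standard computation, using the adjoint action on tensor powers defined in the Preliminaries and the fact that $x\cdot\Omega=0$. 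Hence $\Phi_\g:=\rho_3(\Phi)\in (U(\g)\ph)^{\otimes 3}$ and $R_\g:=\rho_2(R)\in(U(\g)\ph)^{\otimes 2}$ are well-defined elements of degree $0$ (degree $0$ because $\Omega$ is of degree $0$, hence so is every $\Omega_{ij}$, and $\Phi$, $R$ are formal series in the $t_{ij}$ with coefficients in $\K$).

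Next I would take $\Delta$, $\varepsilon$, $S$ to be the \emph{undeformed} color Hopf algebra structure on $U(\g)\ph$ coming from the cocommutative Hopf structure on $U(\g)$ described after Definition~\ref{de:cuea}, and check the quasi-Hopf axioms. The pentagon identity for $\Phi_\g$ follows by applying the functoriality statement $X\mapsto X_{P_1,\dots,P_n}$: the maps $\rho_n$ are compatible with the homomorphisms $T_n\to T_m$, $t_{ij}\mapsto\sum_{p\in P_i,q\in P_j}t_{pq}$, because $\Omega_{P_i,P_j}=\sum_{p,q}\Omega_{pq}$ is precisely how the coproduct acts on $\Omega_{ij}$; so the pentagon relation in $T_3$ maps to the pentagon relation for $\Phi_\g$ in $(U(\g)\ph)^{\otimes 4}$. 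Quasi-coassociativity $\Phi_\g(\Delta\otimes\id)\Delta=(\id\otimes\Delta)\Delta\,\Phi_\g$ holds because $\Delta$ is genuinely coassociative on $U(\g)$, so both sides equal $\Phi_\g$ times the iterated coproduct; likewise the counit condition $(\id\otimes\varepsilon\otimes\id)\Phi_\g=1\otimes 1$ comes from $(\id\otimes\varepsilon\otimes\id)\rho_3 = \rho_2$ applied to $t_{13}\mapsto t_{12}$ etc., together with $\varepsilon(\Omega_1)=\varepsilon(\Omega_2)=0$ — i.e.\ $\Phi$ normalized so that setting any strand to the trivial object kills it. For quasitriangularity I would transport the two hexagon relations in $T_2\ph$ via $\rho$, obtaining the two coproduct identities for $R_\g$; the identity $R_\g\Delta^{\opp}=\Delta R_\g$ holds because $\Delta$ is cocommutative \emph{in the color sense}, $\Delta^{\opp}=\tau\Delta$, but $\tau$ already appears symmetrically, so $\Delta^{\opp}(x)$ and $\Delta(x)$ both commute with $e^{\frac\lambda2\Omega}$ — more precisely, since $\Omega$ is invariant, $[\Omega,\Delta(x)]=0$ and the color-cocommutativity $\tau\Delta=\Delta$ on $U(\g)$ gives the relation. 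Finally the antipode $S$ of $U(\g)$ is carried along unchanged; one may need a small modification by group-like elements as in the classical case, but for the trivial associator normalization chosen here the undeformed $S$ works.

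The step I expect to be the main obstacle is bookkeeping the commutation factors consistently: every time two tensor legs are interchanged — in the definition of $\Phi_{312}$, $\Phi_{3,1,2}$, $R_{13}$, in the product on $H\otimes H$ which carries an explicit $\epsilon(b,c)$, and in the Koszul sign in the hexagon relations — one must check the $\epsilon$-factors collapse exactly as the corresponding signs do in Geer's super case and in Etingof--Kazhdan. The conceptual reason this works is the last paragraph of the Preliminaries: $\grVec$ is the symmetric monoidal functor category $\Hom(\hat\Gamma,\Vect)$, and $T_n$, $\Phi$, $R$ live in a symmetric monoidal setting, so the whole construction is the image of Drinfeld's under the symmetric monoidal functor $\rho$. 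I would therefore organize the proof as: (i) $\rho_n$ is a well-defined morphism of symmetric monoidal data (braid/invariance relations); (ii) it intertwines the operadic composition maps $X\mapsto X_{P_1,\dots}$ with the coproduct; (iii) hence images of pentagon and hexagon relations hold; (iv) coassociativity, counit, and $R\Delta^{\opp}=\Delta R$ from cocommutativity of $U(\g)$; (v) topological freeness is immediate since $U(\g)\ph = U(\g)\otimeshat\K\ph$ and the structure maps are $\K\ph$-linear and reduce mod $\lambda$ to the standard ones. This gives the asserted $A_{\g,t}=(U(\g)\ph,\Delta,\varepsilon,\Phi_\g,R_\g)$.
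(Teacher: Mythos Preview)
Your proposal is correct and follows exactly the approach the paper indicates: define $\rho_n\colon T_n\to\End(M_1\otimes\cdots\otimes M_n)$ by $t_{ij}\mapsto\Omega_{ij}$, verify the $T_n$-relations from symmetry and invariance of $\Omega$, and transport $\Phi$ and $R$ along $\rho$. The paper itself offers no proof beyond the preceding paragraph and the sentence ``As in the non-color case we get the following theorem'', so your write-up is a faithful (and considerably more detailed) expansion of what the paper merely asserts.
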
 

\subsection{Drinfeld category}

Let $\g_+$ be a finite dimensional color Lie algebra, and $\g = D(\g_+)= \g_+ \oplus \g_-$ be the Drinfeld double of $\g_+$ , with its Casimir $\Omega$. Since $\Omega$ is invariant and symmetric, we get a quasitriangular color quasi-Hopf algebra $A_{\g,\Omega}$. 

We define the category $\M_\g$, whose objects are $\g$-modules and whose morphisms are given by 
\begin{equation}
\Hom_{\M_\g} (V,W)= \Hom_\g(V,W)\ph.
\end{equation}
Note: We consider this category to be enriched over $\grVec$ so the homomorphisms here are graded in general.

We equip $\M_\g$ with the usual tensor product, symmetry given by $\beta_{V,W}: V \otimes W \to W \otimes V, v \otimes w \mapsto \tau \exp(\frac{\lambda\Omega}{2}) v \otimes w$
and associator
\begin{equation}
\Phi_{V,W,U}: (V\otimes W) \otimes U \to  V\otimes (W \otimes U), 
v \otimes w \otimes u \mapsto \Phi \cdot (v \otimes w \otimes u).
\end{equation}

This is called Drinfeld category. In fact it is just the category of   modules over the quasitriangular color quasi-Hopf algebra $A_{\g,\Omega}$.

\section{Quantization of color Lie algebras} \label{ch:quant1}

Let $\A$ be the category of topological free graded  $\K\ph$-modules, considered as $\grVec$-category.

Let $\g_+$ be a finite dimensional color Lie algebra and $\g =D(\g_+)$ be its double.
We consider the Verma modules 
\begin{align*}
M_+ &= U(g) \otimes_{U(\g_+)} c_+ & &\text{and}  & M_- &= U(g) \otimes_{U(\g_-)} c_-, 
\end{align*}
where $c_\pm$ is the trivial one dimensional $U(\g_\pm)$-module, concentrated in degree 0. 
The module structure on $M_\pm$ comes from its definition as Verma module by acting on $U(\g)$.

\begin{remark}
 It should be clear that given a graded algebra $A$, right $A$-module $M$ and a left $A$-module $N$, the tensor product $M \otimes_A N$ is well defined and again a graded vector space. If $M$, was 
in fact a $B$-$A$-bimodule, it is a left $B$-module, and similarly  for $N$. 

There is also a nice definition for the tensor product over $A$ using a coend. A graded algebra $A$ can be considered as a category $\A$ enriched  over $\grVec$ with only one object, which we denote by $*$. A left (resp. right)  $A$-module is precisely a functor  from $\A$ (resp. $\A^\opp$)  to $\grVec$. Then the tensor product over $A$ can be defined as 
\begin{equation}
M \otimes_A N := \int^{a \in \A} M(a) \otimes N(a),
\end{equation} 
where the integral symbol denotes a coend.
\end{remark}

Note: By the PBW Theorem, we have an isomorphism $U(\g_+) \otimes U(\g_-) \cong U(g)$ of vector spaces, which is given by the multiplication in $U(\g)$  and in general $U(\g \oplus \h) \cong U(\g) \otimes U(\h)$ as vector space.

This implies that 
\begin{equation}
M_\pm =  U(\g_\mp) 1_\pm  ,
\end{equation}
with $1_\pm$ in $M_\pm$. So $M_\pm$ are free $U(\g_\mp)$-modules.

\begin{lemma}\label{th:umiso}
There exists an isomorphism $\phi:U(\g) \to M_+ \otimes M_-$ of $U(\g)$-modules of degree 0 given on generators by $1\mapsto 1_+ \otimes 1_-$
\end{lemma}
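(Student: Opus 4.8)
The plan is to exhibit the map and its inverse explicitly, using the PBW-type vector-space isomorphisms recorded just before the statement. First I would define $\phi$ as the unique morphism of left $U(\g)$-modules sending the generator $1 \in U(\g)$ to $1_+ \otimes 1_-$; such a map exists and is unique because $U(\g)$ is the free $U(\g)$-module on one generator of degree $0$, and $1_+ \otimes 1_-$ has degree $0$, so $\phi$ is automatically a morphism in $\grVec$ of degree $0$. Concretely $\phi(u) = u \cdot (1_+ \otimes 1_-)$ for $u \in U(\g)$, where the action on $M_+ \otimes M_-$ is the tensor-product (coproduct) action twisted by the Koszul sign, as in the adjoint-action formula for $\g$ on $\g \otimes \g$.

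Next I would show $\phi$ is bijective. Using $M_\pm = U(\g_\mp) 1_\pm$ as free $U(\g_\mp)$-modules, we get $M_+ \otimes M_- \cong U(\g_-) \otimes U(\g_+)$ as graded vector spaces. On the other hand the PBW theorem gives $U(\g) \cong U(\g_-) \otimes U(\g_+)$ as graded vector spaces, via the multiplication map $a_- \otimes a_+ \mapsto a_- a_+$ (in the order dictated by the chosen ordering of a homogeneous PBW basis of $\g = \g_- \oplus \g_+$). The key computational point is that $\phi$ intertwines these two identifications up to lower-order terms: for $a_- \in U(\g_-)$ one has $\phi(a_-) = a_- \cdot (1_+ \otimes 1_-)$, and since $\g_-$ annihilates $1_-$ only up to reordering through $1_+$, expanding via the coproduct $\Delta(a_-)$ and the fact that $\g_-\cdot 1_- = 0$ in $M_-$ modulo the action that moves things into $U(\g_+)$ on the first factor shows that $\phi(a_-) \equiv (1_+ \text{-part}) \otimes a_- 1_-$ plus terms of lower filtration degree, and similarly inserting a factor from $U(\g_+)$ hits the first tensor factor. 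Hence with respect to the filtration by total PBW-degree, $\phi$ is upper-triangular with identity diagonal on each graded piece $V_i$ (each of which, after fixing degree, involves only finitely many basis monomials, so triangularity genuinely implies invertibility). Therefore $\phi$ is an isomorphism of graded vector spaces, and being a $U(\g)$-module map it is an isomorphism in the category of $U(\g)$-modules.

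I expect the main obstacle to be the bookkeeping in the previous paragraph: making precise the claim that $\phi$ is ``triangular with identity diagonal.'' One has to choose a homogeneous PBW basis of $\g$ adapted to the decomposition $\g = \g_+ \oplus \g_-$, order it, and check that when a PBW monomial $a = y_{j_1}\cdots y_{j_r} x_{i_1} \cdots x_{i_s}$ (with $y$'s in $\g_-$, $x$'s in $\g_+$, increasing) acts on $1_+ \otimes 1_-$, the leading term is $(x_{i_1}\cdots x_{i_s} 1_+) \otimes (y_{j_1}\cdots y_{j_r} 1_-)$ up to a nonzero scalar coming from the commutation factor $\epsilon$, with all corrections strictly lower in the filtration; this uses the color Jacobi identity and the Koszul sign rule repeatedly but is otherwise routine. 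An alternative, cleaner route that avoids explicit bases is to argue by completeness/universality: note $M_+ \otimes M_-$ is generated as a $U(\g)$-module by $1_+ \otimes 1_-$ (which follows again from PBW), so $\phi$ is surjective; then a dimension count in each finite-dimensional graded-and-filtered piece $\mathrm{gr}\,(U(\g))_i$ versus $\mathrm{gr}\,(M_+\otimes M_-)_i$, both of which by PBW are symmetric-algebra pieces $S(\g)_i$, shows $\phi$ is also injective. I would present whichever of these two is shorter, most likely the surjectivity-plus-dimension-count version, since it sidesteps the sign computations entirely.
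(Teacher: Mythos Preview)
Your argument is correct, but the paper takes a genuinely different and much shorter route. After defining $\phi$ exactly as you do (unique $U(\g)$-module map with $\phi(1)=1_+\otimes 1_-$), the paper does not touch PBW filtrations or dimension counts at all. Instead it observes that both $U(\g)$ and $M_+\otimes M_-$ carry natural structures of connected cocommutative coalgebras (the latter because each $M_\pm\cong U(\g_\mp)$ is a quotient coalgebra of $U(\g)$, and one takes the tensor coalgebra), that $\phi$ is a coalgebra morphism, and that $\phi$ restricts to an isomorphism on primitive elements (on both sides the primitives are $\g=\g_+\oplus\g_-$). The standard structure theory of connected cocommutative coalgebras in characteristic~$0$ then finishes the proof in one line. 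Your filtration/triangularity or surjectivity-plus-dimension approach is more hands-on and avoids invoking that coalgebra machinery, at the cost of the PBW bookkeeping you correctly flag; the paper's approach trades that bookkeeping for a single appeal to the fact that a morphism of free connected coalgebras which is an isomorphism on primitives is an isomorphism. If you want to keep your write-up self-contained, your second suggestion (surjectivity from the observation that $1_+\otimes 1_-$ generates, then matching dimensions of the finite-dimensional filtered pieces $U_n(\g)$ against those of the induced filtration on $M_+\otimes M_-$) is the cleaner of your two options and entirely adequate.
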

\begin{proof}
It is well defined by the universal property of $U(\g)$, as the extension of $x \mapsto x 1_+ \otimes 1_- +  1_+ \otimes x1_-$.
It is an isomorphism since $U(g)$ and $M_\pm$ can be regarded as free connected coalgebras and $\phi$ clearly is an isomorphism on the primitive elements.
\end{proof}

Next we define a $\grVec$-functor $F: \M_\g \to \A$ by
\begin{equation}
F(V)= \Hom_{\M_\g}(M_+ \otimes M_-,V).
\end{equation}
Since this is just a Hom functor, its definition on morphisms is clear.

The isomorphism from \Cref{th:umiso} gives an isomorphism
\begin{equation}\label{eq:psi}
\Psi_V : F(V) \to V\ph,  f \mapsto f(1_+ \otimes 1_-) .
\end{equation}
So the functor $F$ is naturally isomorphic to the ``forgetful'' functor.

We want to show that $F$ is a tensor functor, for this we need a natural transformation 
$J: \otimes_{\K\ph} \circ (F \otimes F) \to   F \circ \otimes_{\M_\g}  $, which also satisfies $J_{U \otimes V ,W} \circ (J_{U,V} \otimes \id_W) = J_{U ,V  \otimes W} \circ (\id_U \otimes J_{V,W} )$.

Define $i_\pm : M_\pm \to M_\pm \otimes M_\pm$ by $1_\pm \mapsto 1_\pm \otimes 1_\pm$ and extended as $\g$-module morphism.
Clearly $i_\pm$ is of degree $0$.

\begin{lemma}
The map $i_\pm$ is coassociative, i.e. $\Phi \circ (i_\pm \otimes \id) \circ i_\pm = (\id \otimes i_\pm) \circ i_\pm $.
\end{lemma}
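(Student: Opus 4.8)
The plan is to reduce the claimed identity to the coassociativity of the comultiplication on the universal enveloping algebra, exploiting the fact that all the maps in sight are $\g$-module morphisms determined by their value on the generating vectors $1_\pm$. First I would observe that $M_\pm$ is a free $\g$-module on the single generator $1_\pm$ in the appropriate sense: by \Cref{th:umiso} and the PBW identification $M_\pm = U(\g_\mp)1_\pm$, a $\g$-module morphism out of $M_\pm$ (or out of a tensor power of it built from $M_\pm$ alone, viewed via $\Phi$-associated tensor structure) is uniquely determined once its value on $1_\pm$ (resp. $1_\pm\otimes\cdots\otimes 1_\pm$) is fixed, provided that prescribed value is annihilated by $\g_\pm$ in the same way $1_\pm$ is. Hence both sides of $\Phi\circ(i_\pm\otimes\id)\circ i_\pm = (\id\otimes i_\pm)\circ i_\pm$ are $\g$-module morphisms $M_\pm \to M_\pm\otimes(M_\pm\otimes M_\pm)$, and it suffices to check they agree on $1_\pm$.

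Next I would compute both values on $1_\pm$. By the defining property of $i_\pm$ we get $(\id\otimes i_\pm)\circ i_\pm(1_\pm) = 1_\pm\otimes(1_\pm\otimes 1_\pm)$ and $(i_\pm\otimes\id)\circ i_\pm(1_\pm) = (1_\pm\otimes 1_\pm)\otimes 1_\pm$, so the claim becomes $\Phi\cdot\big((1_\pm\otimes 1_\pm)\otimes 1_\pm\big) = 1_\pm\otimes(1_\pm\otimes 1_\pm)$. Now the key point is that the vector $1_\pm\otimes 1_\pm\otimes 1_\pm$ is $\g$-invariant: each $1_\pm$ is annihilated by $\g_\pm$ and generates a trivial one-dimensional submodule under $\g_\pm$, and more to the point $\Delta$-diagonally $\g$ acts on the triple tensor product through the coproduct, so the relevant statement is that $\Phi$ acts trivially on the (diagonal) $\g$-invariant part coming from the trivial sub-comodule spanned by $1_\pm^{\otimes 3}$. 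Since $\Phi$ is built from $\Omega_{ij}$'s and $\Omega$ is invariant, $\Phi$ acts as the identity on any vector killed by all the $\Omega_{ij}$; because $1_\pm$ generates a trivial $\g_\pm$-module and $\Omega = r+\tau(r)$ with $r = x_i\otimes\alpha^i$ pairing $\g_+$ against $\g_-$, one checks $\Omega_{ij}$ annihilates $1_\pm\otimes 1_\pm$ in every pair of slots, hence $\Phi$ fixes $1_\pm^{\otimes 3}$.

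Concretely I would phrase the last step as: the line $\K\cdot 1_\pm$ inside $M_\pm$ is a sub-object for the relevant action, and on it $\Omega$ and therefore $\Phi$ restrict to the identity, exactly as in \citep{ek1,geer}; alternatively one can note that $i_\pm$ lands in the subcomodule of ``group-like-type'' invariants where the Drinfeld associator is normalized to act trivially by $(\id\otimes\epsilon\otimes\id)\Phi = 1\otimes 1$ together with invariance. The main obstacle is making the invariance/annihilation bookkeeping precise in the color-graded setting — i.e. carefully tracking the commutation factors $\epsilon$ in the action of $\Omega_{ij}$ on $1_\pm\otimes 1_\pm$ and verifying that the sign contributions cancel so that $1_\pm^{\otimes 3}$ is genuinely $\g$-invariant and thus fixed by $\Phi$. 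Once that is in hand, the two sides of the coassociativity identity agree on the generator $1_\pm$, and the freeness/rigidity of $M_\pm$ as a $\g$-module promotes this to equality of morphisms, completing the proof.
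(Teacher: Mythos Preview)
Your argument is correct and is essentially the paper's own proof: both sides are $\g$-module maps out of the cyclic module $M_\pm$, so it suffices to compare them on $1_\pm$, and then $\Phi$ fixes $1_\pm^{\otimes 3}$ because each $\Omega_{ij}$ annihilates $1_\pm\otimes 1_\pm$. Two minor corrections: the vector $1_\pm^{\otimes 3}$ is \emph{not} $\g$-invariant (only $\g_\pm$-invariant), so drop that phrasing and rely solely on the $\Omega$-annihilation you already establish; and there is no color sign bookkeeping to worry about, since $1_\pm$ lies in degree~$0$ and all commutation factors appearing are therefore trivially~$1$.
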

\begin{proof}
Following \citep[Lemma 2.3]{ek1}. 
We only prove the identity for $i_+$, since the proof for $i_-$ is analog.

Let $x \in M_+$. Then since the comultiplication in $U(\g_-)$ is coassociative, we have 
\begin{equation}
(i_+ \otimes \id) i_+ x = (\id \otimes i_+) i_+ x.
\end{equation}
It is enough to show
\begin{equation}
\Phi \cdot (i_+ \otimes \id) i_+ x = ( i_+ \otimes \id) i_+ x,
\end{equation}
but since $\Phi$ is $\g$ invariant by definition, it is enough to show this for $x= 1_+$. 
This means $\Phi \cdot 1_+ \otimes 1_+ \otimes 1_+ = 1_+ \otimes 1_+ \otimes 1_+ $. Which follows directly from the fact that $\Omega$ annihilates $1_+ \otimes 1_+$ and the definiton of $\Phi$.
\end{proof}

We define $J$ by 
\begin{equation}
J_{V,W}(v,w) = (v \otimes w) \circ \Phi^{-1}_{1,2,34} \circ ( \id \otimes \Phi_{2,3,4}) \circ \beta_{2,3} \circ ( \id \otimes \Phi^{-1}_{2,3,4}) \circ \Phi_{1,2,34} \otimes (i_+ \otimes i_-)
\end{equation}
for $v \in F(V), w \in F(W)$.
Since all involved maps are of degree 0, $J$ is also of degree 0.

The maps  can be given by a diagram
\begin{equation}
\begin{split}
M_+ \otimes M_- \rightarrow  (M_+ \otimes M_+) \otimes (M_- \otimes M_-))   
\rightarrow M_+ \otimes ((M_+ \otimes M_-) \otimes M_- )  \\
\rightarrow M_+ ((\otimes M_- \otimes M_+) \otimes M_- )   \rightarrow (M_+ \otimes M_-) \otimes (M_+ \otimes M_- ) \rightarrow V \otimes W
\end{split}
\end{equation}

Actually in the graded case there is a better definition 
\begin{align*}
J_{V,W} : \Hom(M_+ \otimes M_- ,V) \otimes \Hom(M_+ \otimes M_-,W) \xrightarrow{\otimes}
\Hom(M_+ \otimes M_- \otimes M_+ \otimes M_-, V \otimes W) \\ \xrightarrow{\Hom(\beta_{23},\cdot)}
\Hom(M_+ \otimes M_+ \otimes M_- \otimes M_-, V \otimes W) \xrightarrow{\Hom(i_+ \otimes i_-, \cdot)}
\Hom(M_+ \otimes  M_-, V \otimes W)
\end{align*}

\begin{theorem}
The functor $F$ together with the natural transformation $J$ forms a tensor functor.
\end{theorem}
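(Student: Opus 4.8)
The plan is to verify the two axioms of a tensor functor: (i) the hexagon-type coherence $J_{U\otimes V,W}\circ(J_{U,V}\otimes\id_W)=J_{U,V\otimes W}\circ(\id_U\otimes J_{V,W})$, and (ii) compatibility with unitors (which here is essentially trivial since $M_\pm$ are strictly counital for $i_\pm$ and $\Phi$ satisfies $(\id\otimes\epsilon\otimes\id)\Phi=1\otimes1$). The whole argument is a transcription of Etingof--Kazhdan \citep[\S2]{ek1} into the color setting, so the real content is checking that the Koszul sign rule introduces no obstruction. First I would reformulate $J$ using the cleaner graded description given just above the theorem: $J_{V,W}$ is the composite of the tensor map on Hom-spaces, then precomposition with $\beta_{23}\colon M_+\otimes M_-\otimes M_+\otimes M_-\to M_+\otimes M_+\otimes M_-\otimes M_-$ (reassociated via $\Phi$'s as needed), then precomposition with $i_+\otimes i_-$. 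Because $F$ is naturally isomorphic via $\Psi$ to the forgetful functor to $\A$, it suffices to chase the element $1_+\otimes 1_-\in M_+\otimes M_-$ through both sides.

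The key computational step is to show that both $J_{U\otimes V,W}\circ(J_{U,V}\otimes\id_W)$ and $J_{U,V\otimes W}\circ(\id_U\otimes J_{V,W})$ arise by precomposing $(u\otimes v\otimes w)$ with one and the same morphism $M_+\otimes M_-\to M_+^{\otimes 3}\otimes M_-^{\otimes 3}$ (up to the $\Phi$-reassociations that are forced on us), and that this common morphism is well defined. Concretely one uses the coassociativity of $i_\pm$ (the preceding lemma) to identify the two ways of building $M_\pm\to M_\pm^{\otimes 3}$, and then one must commute the braiding morphisms $\beta_{ij}=\tau\exp(\tfrac{\lambda\Omega}{2})$ past each other and past the $i_\pm$. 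The braidings satisfy the hexagon relations in the Drinfeld category $\M_\g$ (this is exactly the statement that $\M_\g$ is braided monoidal, i.e. the modules over $A_{\g,\Omega}$), and $i_\pm$ being $\g$-module maps commute with everything built from $\Omega$ up to the required $\Phi$'s; so the rearrangement is governed entirely by the braided-monoidal coherence of $\M_\g$. Here the color signs enter only through $\tau$ and through the product on $H^{\otimes k}$, and since all of $\Omega$, $\Phi$, $i_\pm$ are of degree $0$, every sign that appears is $\epsilon$ evaluated on $0$, hence trivial; this is the point to state explicitly.

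The main obstacle I anticipate is purely bookkeeping: tracking the tower of $\Phi^{\pm1}$'s in the explicit formula for $J$ and confirming that the pentagon identity for $\Phi$ together with the coassociativity lemma for $i_\pm$ makes the two iterated composites literally equal, rather than merely equal ``up to coherence''. I would handle this by working in the strictification, i.e. arguing in the braided monoidal category $\M_\g$ where Mac Lane coherence lets us suppress the associativity isomorphisms, reducing the claim to the single identity $(\beta_{23}\otimes\id)\circ\beta_{?}=\cdots$ among braid morphisms on $M_+^{\otimes ?}\otimes M_-^{\otimes ?}$, which is an instance of the braid relations; then I would re-insert the $\Phi$'s mechanically. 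A clean alternative, which I would mention, is to avoid the explicit formula altogether: observe that $F$ is isomorphic to the functor $V\mapsto \Hom_{\M_\g}(M_+\otimes M_-,V)$ and that $M_+\otimes M_-$, equipped with $i_+\otimes i_-$ composed with the braiding $\beta_{23}$ and the $\Phi$-reassociations, is precisely a (lax) coalgebra object in $\M_\g$; the tensor-functoriality of $F$ is then the general fact that $\Hom$ out of a coalgebra object is lax monoidal, with the hexagon for $J$ following formally from coassociativity of that coalgebra structure, which is the content of the $i_\pm$-coassociativity lemma plus the hexagon axiom for $\beta$. Either way, the substantive inputs are all already available in the excerpt, and no new sign subtleties beyond ``everything is degree $0$'' arise.
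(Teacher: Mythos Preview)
Your approach is essentially the paper's: the paper's proof is two sentences, stating the coherence equation and then deferring entirely to the diagrammatic argument in \citep{etingofbook}, with the remark that a diagrammatic proof transports to the color setting unchanged. Your proposal is a fleshed-out version of exactly that deferral, with the added (correct) justification that since $\Omega$, $\Phi$, $i_\pm$, and hence $J$ are all of degree $0$, the Koszul signs are all trivial and the braided-monoidal coherence of $\M_\g$ carries the argument.

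One correction: the coherence condition you write in (i), namely $J_{U\otimes V,W}\circ(J_{U,V}\otimes\id_W)=J_{U,V\otimes W}\circ(\id_U\otimes J_{V,W})$, is missing the associator of the source category. Because $\M_\g$ has the nontrivial Drinfeld associator $\Phi$ while the target $\A$ is strict, the correct tensor-functor axiom is
\[
F(\Phi_{U,V,W})\circ J_{U\otimes V,W}\circ(J_{U,V}\otimes\id_W)=J_{U,V\otimes W}\circ(\id_U\otimes J_{V,W}),
\]
which is the form the paper uses in its proof. Your later discussion of tracking $\Phi^{\pm 1}$'s and invoking the pentagon identity shows you are aware of this, so it is a slip in the statement rather than a gap in the strategy; just be sure the equation you actually verify carries the $F(\Phi)$ factor.
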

\begin{proof}
  One only needs the check the equation 
  \begin{equation}
  F(\Phi_{UVW}) \circ J_{U \otimes V,W}  (J_{U,V} \otimes \id ) = J_{U,V \otimes  W} \circ ( \id \otimes J_{V,W}) .
  \end{equation}
  The proof given for this in \citep{etingofbook} is diagrammatically so it also holds in the color case. 
\end{proof}

Let $\End(F)$ be the color algebra  of natural endomorphisms of $F$.  In  fact what we mean here is 
\begin{equation}
\End(F) = \int_{ V \in \M_\g} \Hom( F(V) , F(V)).
\end{equation}
This is the end in the category $\grVec$ enriched over itself, so it is a color vector space. We need to use this definition since in the ``classical'' one the natural transformations must consist of morphisms of degree 0. A natural transformation in this sense consists of a family $\eta_V$ of graded maps $V \to V$, which satisfy the ``normal'' relation for a natural transformation but with additional sings, this means
\begin{equation}
\eta_W F(f) = \epsilon(\eta,f) F(f) \eta_V.
\end{equation}
 We say that 
 $\eta \in \End(V)$ is of degree $i$ if all $\eta_V$ are of degree $i$. So $\End(F) = \bigoplus_{g \in \Gamma} \End_g(F)$ is a graded vector space, where $\End_g(F)$ consists of the natural transformations of degree $g$.

\begin{prop}
There is a canonical color algebra isomorphism 
\begin{equation}
\Theta :  U(\g)\ph \to \End(F) , x \mapsto x \cdot
\end{equation}
where $x \cdot$ on the right stands for the action of $x$ induced on every  $U(\g)$-module.
\end{prop}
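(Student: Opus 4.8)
The plan is to define $\Theta$ through the natural isomorphism $\Psi$ of \eqref{eq:psi}, so that $\Theta(x)$ is nothing but ``the action of $x$'' on modules, then to check that $\Theta$ is a homomorphism of color algebras, and finally --- the substantial point --- to establish bijectivity by a Yoneda-type argument using the rank-one free $\g$-module $U(\g)$ as universal test object. For the construction, fix a homogeneous $x\in U(\g)$; for a $\g$-module $V$ the $\K\ph$-linear extension of $v\mapsto x\cdot v$ is a graded endomorphism of $V\ph$ of degree $\abs{x}$, and one first checks that the resulting family is a natural endomorphism, in the $\grVec$-enriched sense, of the functor $\M_\g\to\A$, $V\mapsto V\ph$, to which $F$ is naturally isomorphic via $\Psi$; concretely, $x\cdot g(v)=\epsilon(x,g)\,g(x\cdot v)$ for every morphism $g$ of $\M_\g$, which is immediate from $g(x\cdot v)=\epsilon(g,x)\,x\cdot g(v)$ together with $\epsilon(x,g)\epsilon(g,x)=1$. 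Transporting along $\Psi$ yields a natural endomorphism $\Theta(x)$ of $F$ of degree $\abs{x}$, characterised by $\Psi_V\circ\Theta(x)_V=(x\cdot)\circ\Psi_V$; the assignment extends $\K\ph$-linearly and $\lambda$-adically continuously to $x\in U(\g)\ph$, and this is the map $x\mapsto x\cdot$ of the statement.

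That $\Theta$ is a homomorphism of color algebras is then a formality: $(xy)\cdot=(x\cdot)\circ(y\cdot)$ on every module gives $\Theta(xy)=\Theta(x)\,\Theta(y)$ for the composition product on $\End(F)$, one has $\Theta(1)=\id_F$, and $\Theta$ respects the $\Gamma$-grading and the $\K\ph$-module structure by construction. Injectivity is immediate as well: evaluating $\Theta(x)$ on the left regular module $U(\g)\in\M_\g$ shows that $\Theta(x)=0$ forces $x\cdot1=x=0$.

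For surjectivity, take $\eta\in\End(F)$, which we may assume homogeneous of degree $d$, and let $\tilde\eta$ denote its transport along $\Psi$, so that $\tilde\eta_W\circ g=\epsilon(\eta,g)\,g\circ\tilde\eta_V$ for every morphism $g$ of $\M_\g$ (here one uses that $F(g)$ transports to $g$ under $\Psi$). Put $x:=\tilde\eta_{U(\g)}(1)\in U(\g)\ph$, which is homogeneous of degree $d$. For an arbitrary $V\in\M_\g$ and homogeneous $v\in V$, the map $\iota_v\colon U(\g)\to V$, $a\mapsto\epsilon(v,a)\,a\cdot v$, is a morphism in $\M_\g$ of degree $\abs{v}$ satisfying $\iota_v(1)=v$, and applying the naturality relation to $g=\iota_v$ and evaluating at $1$ gives
\[
\tilde\eta_V(v)=\epsilon(\eta,\iota_v)\,\iota_v(x)=\epsilon(d,\abs{v})\,\epsilon(\abs{v},d)\,x\cdot v=x\cdot v .
\]
By $\K\ph$-linearity and continuity, $\tilde\eta_V=(x\cdot)$ for every $V$, that is, $\eta=\Theta(x)$; hence $\Theta$ is bijective, and being an algebra homomorphism it is an isomorphism of color algebras. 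I expect this last step to be the only delicate one: one must make sure that $U(\g)$ is admitted as an object of $\M_\g$ so that it can serve as a universal test object, that the end defining $\End(F)$ genuinely imposes the signed naturality relation against the non-degree-zero morphisms $\iota_v$ and not merely against degree-zero ones, and that the Koszul signs entering $\iota_v$ and the naturality square cancel precisely as displayed.
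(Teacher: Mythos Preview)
Your argument is correct and follows essentially the same Yoneda-type strategy as the paper: transport along $\Psi$, check injectivity on the regular module, and for surjectivity set $x:=\tilde\eta_{U(\g)}(1)$ and recover $\tilde\eta$ from naturality. The only cosmetic difference is that the paper first tests naturality against the right-multiplication maps $r_y\in\End_\g(U(\g))$ to identify $\tilde\eta_{U(\g)}$ with $x\cdot$, and then passes to arbitrary modules via free modules and quotients, whereas you go directly to any $V$ using the maps $\iota_v\colon U(\g)\to V$; your route is slightly more economical but the content is the same.
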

\begin{proof}
$\End(F)$ and $F(M_+ \otimes M_-)$ are isomorphic due to the Yoneda Lemma for enriched categories. Since $M_+ \otimes M_-$ is isomorphic to $U(\g)$ as $U(\g)$-module, $\End(F)$ is in fact isomorphic to $U(\g)$.

Following \citep{geer}: 
We can identify $F(V)$ with $V\ph$.  
The map $\Theta$ is injective since the action of $U(\g)$ on itself is injective. 
We want to show that is surjective. Let $\eta\in \End(F)$, and we identify $\eta_V$ with a map $V\ph \to V\ph$. We define $x := \eta_{U(\g)}(1)$. We claim that  $\eta_V = x \cdot$.  For $y \in  U(\g)$ we define $r _y \in \End(U(\g))$ by $r_y(z)= \epsilon(y,z) zy$ for $z \in U(\g)$. We have 
\begin{equation}
\eta_{U(\g)}(y) = \eta_{U(\g)} (r_y 1) =  \epsilon(x,y) r_y \eta_{U(\g)}(1) = \epsilon(x,y) r_y x = xy.
\end{equation}
So we get $\eta_{U(\g)}= x\cdot$.  Similarly one shows that  $\eta_{V}= x\cdot$ for any free $\g$-modules $V$ and since any $\g$-module is a quotient of a free one the claim follows.
\end{proof}

We define $J \in (U(\g) \otimes U(\g)) \ph$ by 
\begin{equation} \label{eq:defJ}
J = ( \phi^{-1} \otimes \phi^{-1})( \Phi^{-1} _{1,2,34} ( 1 \otimes \Phi_{2,3,4}) \beta_{2,3} ( \id \otimes \Phi_{2,3,4}) \Phi_{1,2,34} ( 1_+ \otimes 1_+ \otimes 1_- \otimes 1_- )).
\end{equation}
This means $J_{U(\g),U(\g)} (\phi^{-1} \otimes \phi^{-1})( 1_+ \otimes 1_-)$. With this the natural transformation $J$ can be identified with the action of the element $J$.

\begin{prop}
Using $\Psi$ as defined in \cref{eq:psi}, we have 
\begin{equation}
J \cdot (v \otimes w) = \Psi_{V \otimes W} (J_{V,W} (\Psi^{-1}_V(v) \otimes \Psi^{-1}_W(w)))
\end{equation}
for $v \in V\ph, w \in W\ph$.
\end{prop}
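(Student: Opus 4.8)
The plan is to obtain the identity from the defining formula \cref{eq:defJ} together with the naturality of the bifunctor transformation $J$. The starting observation is that \cref{eq:defJ} is exactly the proposition in the special case $V=W=U(\g)$, $v=w=1$: by \cref{eq:psi} and \cref{th:umiso} we have $\phi^{-1}=\Psi_{U(\g)}^{-1}(1)$, so \cref{eq:defJ} says
\[
J=\Psi_{U(\g)\otimes U(\g)}\big(J_{U(\g),U(\g)}(\Psi_{U(\g)}^{-1}(1)\otimes\Psi_{U(\g)}^{-1}(1))\big)=J\cdot(1\otimes1),
\]
the last equality since $1$ has degree $0$. It remains to transport this to arbitrary $V,W,v,w$.

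First I would record the description of $\Psi_V^{-1}$. By \cref{eq:psi}, $\Psi_V^{-1}(v)\colon M_+\otimes M_-\to V$ is the unique $\g$-module morphism sending $1_+\otimes1_-$ to $v$; composing it with the degree-$0$ isomorphism $\phi$ of \cref{th:umiso} produces the map $\rho_v\colon U(\g)\to V$, $u\mapsto u\cdot v$, so that $\Psi_V^{-1}(v)=\rho_v\circ\phi^{-1}$, with no Koszul sign because $\phi$ has degree $0$. Next I would apply the $\grVec$-naturality of $J\colon\otimes_{\K\ph}\circ(F\times F)\to F\circ\otimes_{\M_\g}$ to the pair of morphisms $\Psi_V^{-1}(v)$, $\Psi_W^{-1}(w)$, evaluate the resulting naturality square on $\id_{M_+\otimes M_-}\otimes\id_{M_+\otimes M_-}$, and compose with $\Psi_{V\otimes W}$. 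One side of the square yields precisely $\Psi_{V\otimes W}\big(J_{V,W}(\Psi_V^{-1}(v)\otimes\Psi_W^{-1}(w))\big)$, the right-hand side of the proposition. The other side, after composing with $\Psi_{V\otimes W}$, yields $\rho_v\otimes\rho_w$ applied to the element $(\phi^{-1}\otimes\phi^{-1})\big(\cdots(1_+\otimes1_+\otimes1_-\otimes1_-)\big)$, where $\cdots$ is the composite of associators and the braiding $\beta_{2,3}$ displayed in \cref{eq:defJ}; that element is exactly $J$, so this side equals $(\rho_v\otimes\rho_w)(J)$, up to the Koszul sign produced by the naturality square.

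It then remains to identify $(\rho_v\otimes\rho_w)(J)$, together with the sign carried along, with the action $J\cdot(v\otimes w)$, where $U(\g)\otimes U(\g)$ acts on $V\otimes W$ factorwise with the sign dictated by the product on $H^{\otimes 2}$. Writing $J=\sum_k a_k\otimes b_k$ in homogeneous components, both $(\rho_v\otimes\rho_w)(a_k\otimes b_k)$ and $(a_k\otimes b_k)\cdot(v\otimes w)$ are a sign times $(a_k\cdot v)\otimes(b_k\cdot w)$, and the two signs are reconciled using $\abs{a_k}=-\abs{b_k}$ (since $J$ has degree $0$) together with the bicharacter identity $\epsilon(f,g)\epsilon(g,f)=1$.

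I expect the only genuine difficulty to be the sign bookkeeping: tracking the Koszul sign through the tensor-of-morphisms map $F(V)\otimes F(W)\to\Hom((M_+\otimes M_-)^{\otimes 2},V\otimes W)$, through the $\grVec$-enriched naturality square for $J$, and through the factorwise $U(\g)\otimes U(\g)$-action, and then checking that these contributions cancel. If one prefers to sidestep this, the entire chain can be carried out diagrammatically in $\grVec$, exactly as for the corresponding non-color statements in \citep{etingofbook,geer}; the structural content is then simply the unwinding of \cref{eq:defJ} and the naturality of $J$.
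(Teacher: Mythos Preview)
Your approach is essentially the paper's, just phrased through naturality rather than by direct evaluation: the paper also identifies $\Psi_V^{-1}(v)$ explicitly (as the map $f_v(x)=\epsilon(v,x)\,x\cdot v$), observes from the explicit formula for $J_{V,W}$ that $J_{V,W}(f_v\otimes f_w)(1_+\otimes 1_-)=(f_v\otimes f_w)\big((\phi\otimes\phi)J\big)$ --- which is your naturality step unpacked --- and then compares the resulting signs on both sides using $\deg J=0$.

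One slip worth flagging, since it is exactly the sign issue you anticipate: your map $\rho_v(u)=u\cdot v$ is \emph{not} a $\g$-module morphism in the enriched sense when $\abs{v}\neq 0$, because it satisfies $\rho_v(g\cdot u)=g\cdot\rho_v(u)$ whereas a degree-$\abs{v}$ morphism must satisfy $\rho_v(g\cdot u)=\epsilon(v,g)\,g\cdot\rho_v(u)$. The correct description is $\rho_v(u)=\epsilon(v,u)\,u\cdot v$ (this is the paper's $f_v$ composed with $\phi$), and with that correction your final sign reconciliation --- using $\abs{a_k}+\abs{b_k}=0$ and $\epsilon(f,g)\epsilon(g,f)=1$ --- does go through.
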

\begin{proof}
For each $v \in V\ph$, we define $f_v:M_+\otimes M_- \to V$ by $f_v (x) = \epsilon(v,x) x \cdot v$. Then $f_v(1_+ \otimes 1_-) = v$, since $1_+ \otimes 1_-$ is the unit in $M_+ \otimes M_-$.
So we have $f_v = \Psi^{-1}(v)$.
Let $\theta_1 \otimes \theta_2 := (\phi \otimes \phi )J \in (M_+\otimes M_-)^{\otimes 2}$.
Then the right hand side gives 
\begin{align}
 (J_{V,W} (\Psi^{-1}_V(v) \otimes \Psi^{-1}_W(w)))(1_+ \otimes 1_-) =  (f_v \otimes f_w) (\theta_1 \otimes \theta_2)
 = \epsilon(w,\theta_1) f_v (\theta_1) \otimes f_w(\theta_2) \\
 = \epsilon(w,\theta_1) \epsilon(v,\theta_1) \epsilon(w,\theta_2) \phi^{-1}(\theta_1)\cdot v \otimes \phi^{-1}(\theta_2)\cdot w.
\end{align}
And the left hand side gives 
\begin{align*}
 \phi^{-1}(\theta_1) \otimes \phi^{-1}(\theta_2) (v \otimes w)= \epsilon(\theta_2,v) \phi^{-1}(\theta_1)\cdot v \otimes \phi^{-1}(\theta_2)\cdot w.
\end{align*}
This equal since $J$ and with this $\theta_1 \otimes \theta_2$ are of degree 0.
\end{proof}

\begin{lemma}\label{th:Jlimit}
We have
\begin{equation}
J \equiv 1 + \frac{\lambda}{2}r \mod \lambda^2.
\end{equation}
\end{lemma}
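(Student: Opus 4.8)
The plan is to compute everything modulo $\lambda^2$, using one structural input: the associator of $A_{\g,\Omega}$ is the image of a Drinfeld associator under $t_{ij}\mapsto\Omega_{ij}$, and such an associator — in the normalisation tied to $R=e^{\lambda t_{12}/2}$ — has no term linear in $\lambda$, i.e.\ $\Phi\equiv 1^{\otimes 3}\pmod{\lambda^2}$ and likewise $\Phi^{-1}\equiv 1^{\otimes 3}$. Substituting this into \eqref{eq:defJ}, every factor $\Phi_{1,2,34}^{\pm1}$ and $\Phi_{2,3,4}^{\pm1}$ collapses to the identity, leaving
\[
J\;\equiv\;(\phi^{-1}\otimes\phi^{-1})\bigl(\beta_{2,3}\,(1_+\otimes 1_+\otimes 1_-\otimes 1_-)\bigr)\pmod{\lambda^2},
\]
where $\beta_{2,3}$ is the Drinfeld braiding applied to the two middle legs of $M_+\otimes M_+\otimes M_-\otimes M_-$.

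Next I would expand $\beta_{2,3}=\tau_{2,3}\circ\exp(\tfrac{\lambda}{2}\Omega_{2,3})\equiv\tau_{2,3}+\tfrac{\lambda}{2}\,\tau_{2,3}\Omega_{2,3}\pmod{\lambda^2}$ and treat the two orders separately. The order-zero part sends $1_+\otimes 1_+\otimes 1_-\otimes 1_-$ to $1_+\otimes 1_-\otimes 1_+\otimes 1_-$ (no sign, since all four vectors are homogeneous of degree $0$), and $\phi^{-1}\otimes\phi^{-1}$ carries this to $1\otimes 1$ because $\phi(1)=1_+\otimes 1_-$. For the order-$\lambda$ part, write $\Omega=r+\tau(r)$ with $r=x_i\otimes\alpha^i\in\g_+\otimes\g_-$. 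As $\Omega_{2,3}$ feeds its first tensor component into the second leg, a copy of $M_+=U(\g)\otimes_{U(\g_+)}c_+$ on which $\g_+$ annihilates $1_+$, the summand $r_{2,3}$ kills $1_+\otimes 1_+\otimes 1_-\otimes 1_-$ and only $(\tau r)_{2,3}$ contributes.

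It then remains to evaluate $(\phi^{-1}\otimes\phi^{-1})\,\tau_{2,3}\,(\tau r)_{2,3}(1_+\otimes 1_+\otimes 1_-\otimes 1_-)$. Writing out $(\tau r)_{2,3}$ and applying $\tau_{2,3}$ produces $\sum_i 1_+\otimes(x_i\cdot 1_-)\otimes(\alpha^i\cdot 1_+)\otimes 1_-$: the sign $\epsilon(|\alpha^i|,|x_i|)$ from the flip $\tau_{2,3}$ cancels the sign $\epsilon(|x_i|,|\alpha^i|)$ carried by $\tau(r)$, and every remaining Koszul sign is trivial because $\Omega$ (hence $r$) has total degree $0$ and the $1_\pm$ have degree $0$. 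Finally, since $\phi$ is a $U(\g)$-module morphism with $\phi(1)=1_+\otimes 1_-$ and $\g_+$ (resp.\ $\g_-$) acts by zero on $1_+$ (resp.\ $1_-$), one has $\phi^{-1}(1_+\otimes(x\cdot 1_-))=x$ for $x\in\g_+$ and $\phi^{-1}((\alpha\cdot 1_+)\otimes 1_-)=\alpha$ for $\alpha\in\g_-$; applying this to the two tensor factors turns the sum into $\sum_i x_i\otimes\alpha^i=r$. Hence $J\equiv 1+\tfrac{\lambda}{2}r\pmod{\lambda^2}$.

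The only real difficulty is the sign bookkeeping — tracking which leg each component of $\Omega$ and each $\Phi_{\bullet}$ acts on, and verifying the cancellation of the two $\epsilon$-factors above. The conceptual content is exactly that of the non-color Etingof–Kazhdan computation: the associator is invisible at first order, the ``diagonal'' half $r\in\g_+\otimes\g_-$ of the Casimir dies against $1_+$, and the other half $\tau(r)$ is transported back to $r$ under the identifications $\phi^{-1}$; the commutation factor only decorates this with signs that here evaluate to $1$.
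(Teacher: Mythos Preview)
Your argument is correct and follows essentially the same route as the paper's proof: both use $\Phi\equiv 1\pmod{\lambda^2}$ to reduce to the braiding $\beta_{2,3}$ on $1_+\otimes 1_+\otimes 1_-\otimes 1_-$, observe that one half of $\Omega=r+\tau(r)$ is killed because $\g_\pm$ annihilates $1_\pm$, and then invert $\phi$ on each factor to recover $r$. The only cosmetic difference is that the paper commutes the flip $\tau_{2,3}$ past $e^{\lambda\Omega_{2,3}/2}$ first (using symmetry of $\Omega$) and so phrases the vanishing as $\tau(r)\cdot(1_-\otimes 1_+)=0$ rather than $r\cdot(1_+\otimes 1_-)=0$; your more explicit tracking of the $\epsilon$-signs is, if anything, a small improvement.
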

\begin{proof}
Recall that $r = \sum m_i \otimes p_i$, where the $p_i$ are a basis of $g_+$ and the $m_i$ are the corresponding dual basis of $\g_-$. We have $\tau(r)\cdot (1_- \otimes 1_+) = 0$, since $p_i$ acts trivially on $1_+$. 
So we have using $\Phi \equiv 1 \mod \lambda^2$
\begin{align*}
J &\equiv ( \phi^{-1} \otimes \phi^{-1})(\E^{\lambda \Omega_{23}/2} (  1_+ \otimes 1_- \otimes 1_+ \otimes 1_-) \mod \lambda^2 \\
&\equiv 1 + \frac{\lambda}{2}( \phi^{-1} \otimes \phi^{-1})(r_{23} + \tau(r_{23}))(  1_+ \otimes 1_- \otimes 1_+ \otimes 1_-) \mod \lambda^2 \\
&\equiv 1 + \frac{\lambda}{2}( \phi^{-1} \otimes \phi^{-1})(  1_+ \otimes  p_i 1_- \otimes m_i 1_+ \otimes 1_-) \mod \lambda^2 \\
&\equiv 1 + \frac{\lambda}{2} p_i \phi^{-1} (  1_+ \otimes 1_-) \otimes m_i \phi^{-1}(1_+ \otimes 1_-) \mod \lambda^2 \\
&\equiv 1 + \frac{\lambda}{2} r \mod \lambda^2.
\end{align*}
\end{proof}

\begin{defn}
We can now define a color  Hopf algebra $H$ on $U(\g)\ph$ by 
\begin{align*}
\Delta= J^{-1} \Delta_0 J, \ \varepsilon = \varepsilon_0 \text{ and }
S= Q S_0 Q^{-1},
\end{align*}
with $ Q = \mu (S_0 \otimes \id)J$.
\end{defn}

we  give the corresponding color  Hopf algebra structure on $\End(V)$ under the isomorphisms $\Theta$. For this we first need:

\begin{lemma}
We have
\begin{equation}
\End(F) \otimes \End(F) = \End(F \otimes F)
\end{equation}
as algebra.
\end{lemma}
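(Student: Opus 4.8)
The strategy is to identify both sides with $U(\g)\ph \otimes U(\g)\ph$ and then match the multiplications. Recall from the previous proposition that $\Theta : U(\g)\ph \to \End(F)$ is a color algebra isomorphism, obtained by combining the enriched Yoneda lemma (which gives $\End(F) \cong F(M_+ \otimes M_-)$, since $F = \Hom_{\M_\g}(M_+\otimes M_-,-)$) with the identification $F(M_+\otimes M_-)\cong U(\g)\ph$ from \Cref{th:umiso}. First I would observe that the functor $F \otimes F \colon \M_\g \times \M_\g \to \A$, $(V,W) \mapsto F(V)\otimes F(W)$, is again representable: since the hom-object of the $\grVec$-enriched product category $\M_\g \times \M_\g$ is by definition $\Hom\big((A,B),(V,W)\big) = \Hom_{\M_\g}(A,V)\otimes\Hom_{\M_\g}(B,W)$, we have tautologically
\begin{equation*}
(F\otimes F)(V,W) = \Hom_{\M_\g\times\M_\g}\big((M_+\otimes M_-, M_+\otimes M_-),(V,W)\big).
\end{equation*}

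Applying the enriched Yoneda lemma to this representable functor gives, as color vector spaces,
\begin{equation*}
\End(F\otimes F) = \int_{(V,W)} \Hom\big(F(V)\otimes F(W),F(V)\otimes F(W)\big) \cong (F\otimes F)(M_+\otimes M_-, M_+\otimes M_-) = F(M_+\otimes M_-)\otimes F(M_+\otimes M_-),
\end{equation*}
which is $\End(F)\otimes\End(F)$. Concretely, the isomorphism sends $x\otimes y \in U(\g)\ph\otimes U(\g)\ph$ to the natural endomorphism of $F\otimes F$ whose component on $(V,W)$ is, under $\Psi_V\otimes\Psi_W$, the map $v\otimes w \mapsto \epsilon(y,v)\,(x\cdot v)\otimes(y\cdot w)$ coming from the two separate $U(\g)$-actions with the Koszul sign. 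Surjectivity and injectivity are proved exactly as in the previous proposition: given $\eta\in\End(F\otimes F)$ set $x\otimes y := \eta_{U(\g),U(\g)}(1\otimes 1)$, and use the right-multiplication operators $r_z\in\End_{\M_\g}(U(\g))$ in each of the two slots (together with naturality with respect to pairs $(r_z,\id)$ and $(\id,r_z)$) to conclude $\eta_{U(\g),U(\g)} = (x\otimes y)\cdot$; then extend to arbitrary free modules and, since every $\g$-module is a quotient of a free one, to all pairs $(V,W)$.

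Finally I would check this is an isomorphism of color algebras and not merely of color vector spaces: composition of two of the above action-type natural transformations is computed componentwise, and the enriched composition in $\M_\g\times\M_\g$ introduces precisely the braiding of $\grVec$ between the middle factors, so composition corresponds exactly to the multiplication of the tensor-product color algebra $U(\g)\ph\otimes U(\g)\ph$ (with its Koszul signs), which in turn matches $\End(F)\otimes\End(F)$ via $\Theta\otimes\Theta$. I expect the main obstacle to be purely bookkeeping: tracking the Koszul signs through the enriched Yoneda identification and confirming that the self-enriched end (as opposed to the ordinary end of degree-zero natural transformations) behaves well for topologically free graded $\K\ph$-modules; the structural content is formal once representability is in hand.
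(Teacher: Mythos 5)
Your argument is correct and is essentially the paper's: the paper's entire proof is the remark that the identity ``follows from the definition as an end,'' and the representability-plus-enriched-Yoneda computation you give (identifying $\End(F\otimes F)$ with $(F\otimes F)(M_+\otimes M_-,\,M_+\otimes M_-)\cong F(M_+\otimes M_-)^{\otimes 2}\cong \End(F)\otimes\End(F)$) is exactly what that remark is gesturing at. You supply considerably more detail than the paper does, in particular the check that the identification is one of color algebras with the correct Koszul signs, and your observation that representability is what rescues the statement (since ends do not commute with tensor products in general) is precisely the point that fails for $F^2$ in the second quantization, where the paper only gets $H\otimes H\subset H^2$.
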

\begin{proof}
In the classical case this is obvious, but it is not so in our case, since we consider the enriched natural transformations. Here it follows from the definition as an end.
\end{proof}

There is a natural coproduct  on $\End(F)$ given by
\begin{equation}\label{eq:coprod}
\Delta(a)_{V,W} = J^{-1}_{V,W} a_{V \otimes W} J_{V,W} \text{ for } a \in \End(F)
\end{equation}
and $\End(F)$ becomes a bialgebra with it. It fact it is a Hopf algebra.
This follows directly from the fact that $(F,J)$ is a tensor functor, which gives that  $J$ is a twist. The twisted coproduct is precisely the one given here and the twisted $\Phi$ is trivial. We have that $H \cong \End(V)$ as Hopf algebra. 

\begin{prop}
The Hopf algebra $H$ is a  quantization of  the color Lie algebra $\g$. Moreover we define an $r$-matrix on $H$ by \begin{equation}
R= (J^\opp)^{-1} \E^{\frac{\lambda}{2}\Omega}.
\end{equation}
Then $(H,R)$ is a quasitriangular quantization of $(\g,r)$.
\end{prop}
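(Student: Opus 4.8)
The plan is to check the two assertions in turn — that $H$ quantizes the color Lie bialgebra $\g = D(\g_+)$, and that $(H,R)$ is a quasitriangular quantization of $(\g,r)$ — and in each case to reduce the work to two facts already at hand: the first–order expansion $J \equiv 1 + \tfrac{\lambda}{2}r \mod \lambda^2$ of \Cref{th:Jlimit} (in particular $J \equiv 1 \mod \lambda$), and the fact that, $(F,J)$ being a tensor functor, $J$ is a twist on the quasitriangular color quasi-Hopf algebra $A_{\g,\Omega}$ for which the twisted associator is trivial.

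First I would dispose of the QUEA property: since $J \equiv 1 \mod \lambda$, reducing $\Delta = J^{-1}\Delta_0 J$, $\varepsilon = \varepsilon_0$ and $S = Q S_0 Q^{-1}$ modulo $\lambda$ recovers the standard color Hopf algebra structure on $U(\g)$, so $H/\lambda H \cong U(\g)$. For the co-bracket condition I would expand to first order: from $J = 1 + \tfrac{\lambda}{2}r + O(\lambda^2)$ and $J^{-1} = 1 - \tfrac{\lambda}{2}r + O(\lambda^2)$ one gets, for $x \in \g$,
\begin{equation*}
\Delta(x) \equiv \Delta_0(x) + \frac{\lambda}{2}\,[\Delta_0(x), r] \mod \lambda^2 ,
\end{equation*}
where the bracket is the ordinary commutator in $U(\g)^{\otimes 2}$; no Koszul sign intervenes because $r$ is of degree $0$. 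A short computation with the commutation factors, using that $r = \sum_i e_i \otimes e^i$ is built from dual homogeneous bases with $|e_i| + |e^i| = 0$, identifies $[\Delta_0(x), r]$ with the diagonal adjoint action $x \cdot r$, which by the formula $\delta(x) = x\cdot r$ for the double is exactly $\delta(x)$. Since the braiding $\tau$ is $\g$-equivariant and $\Delta_0$ is cocommutative ($\tau\Delta_0 = \Delta_0$), applying $\tau$ yields $\Delta^{\opp}(x) \equiv \Delta_0(x) + \tfrac{\lambda}{2}\, x\cdot\tau(r) \mod \lambda^2$; subtracting and using invariance of the Casimir, $x\cdot\Omega = x\cdot(r+\tau(r)) = 0$, gives $x\cdot(r-\tau(r)) = 2\,x\cdot r = 2\delta(x)$, whence $\Delta(x) - \Delta^{\opp}(x) \equiv \lambda\,\delta(x) \mod \lambda^2$, i.e. $\delta = \tfrac{1}{\lambda}(\Delta - \Delta^{\opp}) \mod \lambda$. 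So $H$ is a quantization of $\g$.

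For the quasitriangular part I would not verify the hexagon-type identities directly but invoke the twist instead. The element $R$ is the $J$-twist of the $R$-matrix $\E^{\frac{\lambda}{2}\Omega}$ of $A_{\g,\Omega}$, i.e. $R = (J^{\opp})^{-1}\,\E^{\frac{\lambda}{2}\Omega}\,J$ (with $J^{\opp} = J_{21}$); since $J$ is a twist whose twisted associator is trivial, the resulting twist-equivalent object is a genuine quasitriangular color Hopf algebra, and its coproduct, counit and antipode coincide with those defining $H$. Hence $(H,R)$ is a quasitriangular color Hopf algebra. It then remains to identify $R$ modulo $\lambda^2$: with $(J^{\opp})^{-1} \equiv 1 - \tfrac{\lambda}{2}\tau(r)$, $\E^{\frac{\lambda}{2}\Omega} \equiv 1 + \tfrac{\lambda}{2}(r+\tau(r))$ and $J \equiv 1 + \tfrac{\lambda}{2}r$, the first-order terms combine to $\tfrac{\lambda}{2}r + \tfrac{\lambda}{2}(r+\tau(r)) - \tfrac{\lambda}{2}\tau(r) = \lambda r$, so $R \equiv 1\otimes 1 + \lambda r \mod \lambda^2$, and $(H,R)$ is a quasitriangular quantization of $(\g, r)$.

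I expect the only genuinely delicate point to be the first–order bookkeeping in the middle paragraph: getting the commutation factors right in the identity $[\Delta_0(x), r] = x\cdot r$ and in the passage to $\Delta^{\opp}$, and invoking $\g$-equivariance of $\tau$ and invariance of $\Omega$ at the right moments. Everything else is formal, resting on $J \equiv 1 \mod \lambda$, on \Cref{th:Jlimit}, and on the twist structure already obtained from the tensor functor $(F,J)$.
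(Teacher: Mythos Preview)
Your argument follows the paper's proof essentially line for line: the same reduction modulo $\lambda$, the same first-order expansion $\Delta_1(x)=\tfrac12[\Delta_0(x),r]$ together with the invariance of $\Omega$ to obtain $\delta$, the same appeal to the twist to conclude that $R$ is an $r$-matrix, and the same first-order computation $R\equiv 1\otimes 1+\lambda(\tfrac12 r-\tfrac12\tau(r)+\tfrac12\Omega)=1\otimes1+\lambda r$. Your insertion of the trailing factor $J$ in $R=(J^{\opp})^{-1}\E^{\lambda\Omega/2}J$ is in fact what the twist formula and the paper's own modulo-$\lambda^2$ calculation require, so this silent correction is appropriate.
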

\begin{proof}
 By definition $\factor{H}{\lambda H}$ is  isomorphic to $U(g)$ as color Hopf algebra.  From \Cref{th:Jlimit} and the definition of the coproduct it follows that $ 
\Delta_1(x) = \frac{1}{2} r \Delta_0(x) + \frac{1}{2}\Delta_0(x) r = \frac{1}{2}[\Delta_0(x),r]$, so we have
 $\delta(x) = \Delta_1(x) - \Delta_1^\opp(x) =  \frac{1}{2} [\Delta_0(x),r- \tau(r)] =  [\Delta_0(x),r]$, since $r+ \tau(r)= \Omega$, which is $\g$ invariant.

$R$ is an $r$-matrix, because it is obtained by twist, or explicit computation.
 
From \Cref{th:Jlimit}, we immediately get $R \equiv 1 \otimes 1 + \lambda( \frac{1}{2} r -  \frac{1}{2} \tau(r) +  \frac{1}{2} \Omega) \equiv  1 \otimes 1 + \lambda r \mod \lambda^2$.
\end{proof}

\subsection{Quantization of $\g_+$ and $\g_-$}

As shown before, we have $\End(F) \cong \End(M_+ \otimes M_-)$, since both are isomorphic to $U(g)$ as color algebras.  So we can define $U_\lambda(\g_+) =F(M_-)$ and embed it into $H$ via the map $i: F(M_-) \to \End(M_+ \otimes M_-)$  given by 
\begin{equation}
i(x) = (\id \otimes x) \circ \Phi \circ ( i_+ \otimes \id) 
\end{equation}
for $x  \in F(M_-)$. 
Then $i$ is injective, and satisfies 
\begin{equation}
i(x) \circ i(y) = i(z)
\end{equation}
for $x,y \in F(M_-)$ and  $z =  x \circ (\id \otimes y) \circ \Phi \circ (i_+ \otimes \id) \in F(M_-)$.
So $U_\lambda(\g_+)$ is a  color subalgebra of $H$.

We next want to show that it is indeed a color Hopf subalgebra, for this we need the following proposition.
\begin{prop}
The $r$-matrix of $H$ is polarized which means that $R \in U_\lambda(\g_+) \otimes U_\lambda(\g_-) \subset H \otimes H$.
\end{prop}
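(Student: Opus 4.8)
The plan is to mimic the Etingof–Kazhdan/Geer argument by expressing $R = (J^{\opp})^{-1}\E^{\lambda\Omega/2}$ through the coend description of $\End(F)$ and then tracking into which tensor factors its building blocks land. First I would recall that $\Omega = r + \tau(r)$ with $r = \sum m_i\otimes p_i$, $p_i$ a basis of $\g_+$ and $m_i$ the dual basis of $\g_-$. Since $\E^{\lambda\Omega/2}$ is a sum of products of $\Omega$'s, and each $\Omega$ is a sum of a term in $\g_-\otimes\g_+$ and a term in $\g_+\otimes\g_-$, one cannot conclude polarization just by looking at $\E^{\lambda\Omega/2}$ alone — the point is that the twist $(J^{\opp})^{-1}$ exactly corrects this. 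So the core of the argument should be a factorization of $R$ into a part that visibly lies in $U_\lambda(\g_+)\otimes U_\lambda(\g_-)$.

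The key steps, in order, would be: (1) Use the identification $\End(F)\otimes\End(F)=\End(F\otimes F)$ together with $\Theta$ to view $R$ as a natural transformation, i.e. as an endomorphism of the functor $(V,W)\mapsto F(V)\otimes F(W)$; concretely $R$ acts on $F(M_+\otimes M_-)\otimes F(M_+\otimes M_-)$, equivalently is determined by its value on $(1_+\otimes 1_-)\otimes(1_+\otimes 1_-)$. (2) Recall from the construction of $U_\lambda(\g_+)=F(M_-)$ and its embedding $i(x)=(\id\otimes x)\circ\Phi\circ(i_+\otimes\id)$, and the dual embedding for $U_\lambda(\g_-)=F(M_+)$; an element of $\End(F)$ lies in $U_\lambda(\g_+)$ iff, morally, it ``only acts through the $M_-$-leg'' of $M_+\otimes M_-$, and dually for $U_\lambda(\g_-)$. (3) Compute $\E^{\lambda\Omega/2}\cdot(1_+\otimes 1_- \otimes 1_+\otimes 1_-)$: because $p_i$ annihilates $1_+$ and $m_i$ annihilates $1_-$, many cross-terms drop, exactly as in \Cref{th:Jlimit}, and one sees that the surviving action of $\Omega_{13}$-type terms uses the $M_-$-leg on the left and the $M_+$-leg on the right — i.e. already has the polarized shape — while the ``wrong'' pieces are precisely those cancelled against $(J^{\opp})^{-1}$. (4) Use the Yoneda/naturality argument (as in the proof of the proposition identifying $\End(F)$ with $U(\g)\ph$) to upgrade the statement on the single pair of generating objects $M_\pm$ to all of $\M_\g$: since every $\g$-module is a quotient of a free one and $M_+\otimes M_-\cong U(\g)$, naturality forces $R$ to have the claimed polarized form on every $V\otimes W$.

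The main obstacle I expect is step (3) together with making the bookkeeping of ``which leg'' rigorous: one must carefully carry the associator insertions $\Phi$ through the definition of $R$ and of the embeddings $i$, and verify that modulo the (invisible) action of $\Phi$ on the highest-weight vectors — which by $\g$-invariance and $\Omega\cdot(1_+\otimes 1_+)=0$, $\Omega\cdot(1_-\otimes 1_-)=0$ is trivial on the relevant vectors — the exponential $\E^{\lambda\Omega/2}$ applied after $i_+\otimes i_-$ really does land in $U_\lambda(\g_+)\otimes U_\lambda(\g_-)$. In other words, the subtlety is not the $\lambda$-linear term (that is \Cref{th:Jlimit}) but showing the full, all-orders statement; the right tool is to argue structurally via the coend/Yoneda picture rather than by an order-by-order expansion, reducing everything to the behaviour of $\Omega$ on products of the highest-weight vectors $1_\pm$, where the color signs enter only through the degree-$0$ condition on $J$ and $\Omega$ and so never obstruct the cancellation.
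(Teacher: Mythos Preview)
Your proposal has a genuine gap at the crucial point. Step~(3) is not an argument but a hope: you assert that ``the `wrong' pieces are precisely those cancelled against $(J^{\opp})^{-1}$'' and that one should ``argue structurally via the coend/Yoneda picture,'' yet you give no mechanism for this all-orders cancellation. Knowing that $p_i\cdot 1_+=0$ and $m_i\cdot 1_-=0$ controls the $\lambda$-linear term (that is exactly \Cref{th:Jlimit}), but it does not by itself force the higher-order products of $\Omega$'s, combined with the higher-order terms of $(J^{\opp})^{-1}$, to land in $U_\lambda(\g_+)\otimes U_\lambda(\g_-)$. Your step~(4) also does not help here: Yoneda lets you transport an already-established identity from $M_+\otimes M_-$ to all modules, but it cannot manufacture the polarization you have not yet proved on $M_+\otimes M_-$.

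The paper's proof avoids this entirely by never expanding $\E^{\lambda\Omega/2}$ or $J$. Instead it reformulates the definition of $R$ as the single morphism identity
\[
R \circ \beta_{23}^{-1} \circ (i_+\otimes i_-) \;=\; \beta_{23}\circ(i_+\otimes i_-)
\]
in $\Hom(M_+\otimes M_-,\,(M_+\otimes M_-)^{\otimes 2})$. One then inserts an extra layer of $i_+\otimes i_-$ (using the counit to collapse it back) and invokes the coassociativity of $i_+$ and $i_-$ to obtain the factorization
\[
R \;=\; (\id\otimes\varepsilon\otimes\id\otimes\id\otimes\varepsilon\otimes\id)\circ(\id\otimes R\otimes\id)\circ(i_+\otimes\id\otimes\id\otimes i_-).
\]
This last formula is exactly what ``polarized'' means once you compare with the embeddings $i:F(M_\mp)\hookrightarrow\End(F)$: the outer $i_+$ on the left leg and $i_-$ on the right leg exhibit $R$ as an element of $U_\lambda(\g_+)\otimes U_\lambda(\g_-)$. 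The whole argument is diagrammatic and works to all orders at once; the missing idea in your proposal is precisely this morphism-level characterization of $R$ and the trick of doubling $i_\pm$ and using their coassociativity.
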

\begin{proof}
Following \citep[Lemma 19.4]{etingofbook}. 
The defining equation of $R$ is equivalent to
\begin{equation}\label{eq:defR2}
R \circ \beta^{-1}_{23} \circ ( i_+ \otimes i_-) = \beta_{23} \circ ( i_+ \otimes i_-)
\end{equation}
in $\Hom(M_+ \otimes M_-, M_+ \otimes M_- \otimes M_+ \otimes M_-)$, where we regard $R$ as an element in $\Hom(M_+ \otimes M_- \otimes M_+ \otimes M_-,M_+ \otimes M_- \otimes M_+ \otimes M_-)$.

Using the counit we have 
\begin{equation}
  \beta_{23} (i_+ \otimes i_-) = (\id \otimes \varepsilon \id \otimes \id \otimes \varepsilon \id \otimes) \beta_{34}(\id \otimes i_+ \otimes i_- \otimes  \id) (i_+ \otimes i_-) 
\end{equation}
and using \cref{eq:defR2} in the middle and then the coassociativity of $i_+$ and $i_-$ we further get
\begin{align*}
  \beta_{23} (i_+ \otimes i_-) &=  (\id \otimes \varepsilon \id \otimes \id \otimes \varepsilon \id \otimes)(\id \otimes R \otimes \id) \beta_{34} (\id \otimes i_+ \otimes i_- \otimes  \id) (i_+ \otimes i_-) \\
  &= (\id \otimes \varepsilon \id \otimes \id \otimes \varepsilon \id \otimes)(\id \otimes R \otimes \id) \beta_{34} (i_+ \otimes \id \otimes \id \otimes  i_-) (i_+ \otimes i_-) .
\end{align*}

So we have proved 
\begin{equation}
R = (\id \otimes \epsilon \otimes \id \otimes \id \otimes \epsilon \otimes \id) \circ  (\id \otimes R \otimes \id) \circ ( i_+ \otimes \id \otimes \id \otimes i_-).
\end{equation}

\end{proof}

We define maps $p_\pm: U_h(\g_\mp)^* \to U_h(\g_\pm)$ by 
\begin{equation}
p_+(f) = (\id \otimes f)(R) \text{ resp. } p_-(g) = (g \otimes \id)(R).
\end{equation} 
Let $\im p_\pm$ be the image of $p_\pm$ and $\U_\pm$ be the color  algebra generated by it, then we have 
\begin{lemma}
$\U_\pm$ is closed under the coproduct and the antipode.
\end{lemma}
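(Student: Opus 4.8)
The plan is to adapt the classical Etingof--Kazhdan/Etingof--Schiffmann argument (as in \citep[\S 19--20]{etingofbook}) to the color setting, exploiting the quasitriangular structure of $H$ together with the polarization $R \in U_\lambda(\g_+) \otimes U_\lambda(\g_-)$ just established. The key structural fact to use is the standard QUEA identities for the $r$-matrix of a quasitriangular (quasi-)Hopf algebra: $(\Delta \otimes \id)R$ and $(\id \otimes \Delta)R$ are given by products of the $R_{ij}$ (conjugated by the associator $\Phi$), and $(S \otimes \id)R = R^{-1}$, $(\id \otimes S)R = R^{-1}$ up to the usual twisting. Since $R$ lies in $U_\lambda(\g_+) \otimes U_\lambda(\g_-)$, these identities express the coproduct (resp.\ antipode) applied to $p_+(f)$, $p_-(g)$ in terms of the $p_\pm$ of the transformed functionals, and hence land back in the algebra generated by $\im p_\pm$.

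Concretely, for $\U_+$ I would first compute $\Delta(p_+(f))$. Writing $R = R' \otimes R''$ (Sweedler-type notation, with $R' \in U_\lambda(\g_+)$, $R'' \in U_\lambda(\g_-)$), one has $p_+(f) = f(R'') R'$. Applying $\Delta$ to $R'$ in the first tensor slot and using the $R$-matrix axiom $(\Delta \otimes \id)R = \Phi_{312} R_{13} \Phi_{132}^{-1} R_{23} \Phi_{123}$, one gets $\Delta(p_+(f))$ as a sum of terms of the form (constant involving $\Phi$) $\cdot\, p_+(f_{(1)}) \otimes p_+(f_{(2)})$, where the $f_{(i)}$ arise from a comultiplication on $U_\lambda(\g_-)^*$ dual to the product of $U_\lambda(\g_-)$; since $\Phi \in H^{\otimes 3}$ does \emph{not} in general lie in $\U_+^{\otimes 3}$, one has to check — as in the classical case — that the extra $\Phi$-factors can be absorbed, which works because $\Phi$ is built from $\Omega$ and the relevant components act through $\U_+ \otimes \U_+ \otimes \U_+$ on the Verma-module realization (this is where the finite-dimensionality of $\g_+$ and the explicit Verma-module picture are used). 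The upshot is $\Delta(p_+(f)) \in \U_+ \otimes \U_+$, and since $\U_+$ is by definition generated as an algebra by the $p_+(f)$ while $\Delta$ is an algebra map, $\Delta(\U_+) \subset \U_+ \otimes \U_+$. The antipode case is analogous, using $(S \otimes \id)R = R^{-1}$ together with the fact that $R^{-1}$ is again polarized (it lies in $U_\lambda(\g_+) \otimes U_\lambda(\g_-)$, being a power series $1 \otimes 1 + O(\lambda)$ whose inverse is computed slot-wise), so $S(p_+(f)) = \overline{f}(\overline{R''})\,\overline{R'} \in \im p_+ \subset \U_+$; then $S(\U_+) \subset \U_+$ because $S$ is an anti-algebra map. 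The symmetric argument with the roles of the two slots exchanged handles $\U_-$.

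Throughout, all the identities have to be written with the Koszul sign rule: whenever one swaps a functional past an element of $\g$ or composes graded maps, a factor $\epsilon(-,-)$ appears, and one must check these signs are consistent — e.g.\ $p_\pm$ are graded maps of degree $0$ (since $R$ has degree $0$), so $\im p_\pm$ is a graded subspace and $\U_\pm$ a graded subalgebra, and the comultiplications induced on the duals $U_\lambda(\g_\mp)^*$ are the color-graded ones. The main obstacle I expect is precisely the bookkeeping around the associator $\Phi$: in the color quasi-Hopf setting the coproduct formulas for $R$ are genuinely quasi-Hopf (they carry $\Phi$-factors), so showing closedness of $\U_\pm$ under $\Delta$ is not a one-line consequence of $R$ being polarized — one really needs the Verma-module description of $H$, $\Phi$, and $\beta$ to argue that these $\Phi$-corrections stay inside $\U_+^{\otimes 3}$ (respectively $\U_-^{\otimes 3}$), exactly as in the classical proof, and to verify the signs come out right. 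Once that is in place, the rest is formal from $\Delta$ and $S$ being (anti-)homomorphisms.
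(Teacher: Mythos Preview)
Your overall strategy---use the quasitriangular (hexagon) identities $(\Delta\otimes\id)R=R_{13}R_{23}$, $(\id\otimes\Delta)R=R_{13}R_{12}$ for the coproduct and $(S\otimes\id)R=R^{-1}$ for the antipode---is exactly what the paper intends: its entire proof reads ``Uses the hexagon identity.'' So the approach is the same.

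However, you have introduced an unnecessary complication that, if followed through, would lead you down a wrong path. You repeatedly worry about $\Phi$-factors in the hexagon relations, writing that ``in the color quasi-Hopf setting the coproduct formulas for $R$ are genuinely quasi-Hopf (they carry $\Phi$-factors)'' and that one must check these stay inside $\U_+^{\otimes 3}$ via the Verma-module realization. But $H$ is \emph{not} a quasi-Hopf algebra: the whole point of twisting by $J$ was to kill the associator, and the paper states this explicitly (``the twisted $\Phi$ is trivial'') just before defining $H$. Hence the hexagon relations for $(H,\Delta,R)$ are the ordinary ones with no $\Phi$, and $\Delta(p_+(f))=(\id\otimes\id\otimes f)(R_{13}R_{23})$ lies in the span of $p_+(f_{(1)})\otimes p_+(f_{(2)})$ with no correction terms to absorb. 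The argument is therefore simpler than you anticipate; the ``main obstacle'' you expect does not exist. The Koszul-sign bookkeeping you mention is of course still needed, but it is routine since $R$ (and hence each $p_\pm$) has degree $0$.
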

\begin{proof}
Uses the hexagon identity. 
\end{proof}

\begin{lemma}
We have $U_h(\g_\pm) \otimes_{\K\ph} \K\laur\lambda = \U_\pm  \otimes_{\K\ph} \K\laur\lambda$.
\end{lemma}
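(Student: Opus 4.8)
The plan is to show that $\U_\pm$ and $U_h(\g_\pm)$ become equal after inverting $\lambda$, using the polarized $r$-matrix as the bridge. The key observation is that $U_h(\g_\pm) = F(M_\mp)$ is a topologically free $\K\ph$-module whose reduction mod $\lambda$ is $U(\g_\pm)$, and that the maps $p_\pm$ built from $R$ approximate, to first order in $\lambda$, the natural pairing that identifies $\g_\mp^*$ with $\g_\pm$ inside the double $\g = D(\g_\pm)$.

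First I would make precise the classical limit. By \Cref{th:Jlimit} we have $R \equiv 1\otimes 1 + \lambda r \bmod \lambda^2$ with $r = \sum_i p_i \otimes m_i$, where $\{p_i\}$ is a basis of $\g_+$ and $\{m_i\}$ the dual basis of $\g_-$. Hence for $f \in U_h(\g_-)^*$ we get
\begin{equation}
p_+(f) = (\id \otimes f)(R) \equiv f(1)\cdot 1 + \lambda \sum_i f(m_i)\, p_i \bmod \lambda^2,
\end{equation}
and symmetrically for $p_-$. So if $f$ is chosen in $U_h(\g_-)^*$ to vanish on $1$ and to pick out (mod $\lambda$) the coordinate dual to a given $m_j$, then $\tfrac{1}{\lambda}p_+(f) \equiv p_j \bmod \lambda$. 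Therefore the image of $p_+$, after inverting $\lambda$ and reducing, contains all of $\g_+$; since $\g_+$ generates $U(\g_+)$ as an algebra and $\U_+$ is defined to be the algebra generated by $\im p_+$, a standard successive-approximation / Nakayama-type argument over the complete local ring $\K\ph$ shows that $\U_+\otimes_{\K\ph}\K\laur{\lambda}$ surjects onto (indeed contains) $U_h(\g_+)\otimes_{\K\ph}\K\laur{\lambda} = F(M_-)\otimes_{\K\ph}\K\laur{\lambda}$. The reverse inclusion $\U_+ \subset U_h(\g_+)$ is immediate once one knows $\im p_+ \subset U_h(\g_+)$, which is exactly the content of the polarization proposition proved just above: $R \in U_\lambda(\g_+)\otimes U_\lambda(\g_-)$, so $(\id\otimes f)(R) \in U_\lambda(\g_+)$ for every $f$. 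The case of $\U_-$ is entirely parallel, using $(g\otimes\id)(R)$ and the basis $\{m_i\}$ of $\g_-$.

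Concretely, the steps in order are: (i) record $R \equiv 1 + \lambda r \bmod \lambda^2$ and the explicit form of $r$; (ii) evaluate $p_\pm$ on suitable functionals to exhibit every basis element of $\g_\pm$ in $\tfrac{1}{\lambda}\im p_\pm \bmod \lambda$; (iii) invoke the polarization result to get $\U_\pm \subset U_h(\g_\pm)$; (iv) run a filtration/Nakayama argument: any element of $U_h(\g_\pm)$ can be written, modulo $\lambda$, as a polynomial in the $p_i$ (resp. $m_i$), lift term by term using elements of $\U_\pm \otimes \K\laur{\lambda}$, and conclude the difference lies in $\lambda\, U_h(\g_\pm)$, iterating and using $\lambda$-adic completeness to sum the correction series. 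The main obstacle is step (iv), i.e. controlling that the iteration converges inside $\U_\pm \otimes_{\K\ph}\K\laur{\lambda}$ rather than merely in $U_h(\g_\pm)\otimes\K\laur{\lambda}$; this is where one genuinely uses that $\U_\pm$ is a \emph{subalgebra} (so products of lifted generators stay inside it) together with the fact that, over $\K\laur{\lambda}$, the leading-order obstruction at each stage already lives in $\im p_\pm$. In the super case this is \citep[Proposition 20.2]{etingofbook} / \citep{geer}, and the argument there is diagrammatic enough to transport verbatim to the color setting, the only change being the insertion of commutation factors $\epsilon$, which are scalars and do not affect the inductive bookkeeping.
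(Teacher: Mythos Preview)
Your proposal is correct and follows essentially the same approach as the paper, which simply defers to \citep[Lemma~19.5]{etingofbook} and notes that the argument goes through in the color setting. Your outline reconstructs precisely that argument---one inclusion from the polarization of $R$, the other from the expansion $R\equiv 1+\lambda r$ plus a Nakayama-type lifting---and correctly observes that the commutation factors, being scalars, do not disturb the induction; the only slip is bibliographic (the relevant reference is Lemma~19.5, not Proposition~20.2).
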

\begin{proof}
  The proof in \citep[Lemma19.5]{etingofbook} works also in the color case.
\end{proof}

\begin{theorem}
$U_h(\g_\pm)$ is a color Hopf algebra and a quantization of $\g_+$.
\end{theorem}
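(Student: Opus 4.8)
The plan is to transport the color Hopf structure that the two preceding lemmas establish on the auxiliary subalgebra $\U_\pm$ up to $U_h(\g_\pm)=F(M_\mp)$, using that the two agree after inverting $\lambda$, and then to read off the semiclassical limit from that of $H$. I will do the case $U_h(\g_+)$; the case $U_h(\g_-)$ is identical after exchanging $\pm$. Recall that $U_h(\g_+)=F(M_-)$ is topologically free over $\K\ph$, with $\Psi_{M_-}$ of \cref{eq:psi} giving $U_h(\g_+)\cong M_-\ph=U(\g_+)\ph$ as $\K\ph$-modules, and that it is a color subalgebra of $H$. The first step is to compute the embedding $i(x)=(\id\otimes x)\circ\Phi\circ(i_+\otimes\id)$ modulo $\lambda$. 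Since $\Phi\equiv 1$ and $i_+(1_+)=1_+\otimes 1_+$, the $\g$-module endomorphism $\overline{i(x)}$ of $M_+\otimes M_-$ sends the cyclic vector $1_+\otimes 1_-$ to $1_+\otimes\bigl(x(1_+\otimes 1_-)\bmod\lambda\bigr)$; writing $x(1_+\otimes 1_-)\bmod\lambda=v1_-$ with $v\in U(\g_+)$, and using that $U(\g_+)$ acts trivially on $1_+$ so that $\phi^{-1}(1_+\otimes v1_-)=v$, the isomorphism $\Theta$ together with \Cref{th:umiso} identifies $\overline{i(x)}$ with the action of $v$. As $\Psi_{M_-}$ is an isomorphism, $\bar x\mapsto v$ is therefore a bijection of $U_h(\g_+)/\lambda U_h(\g_+)$ onto the subalgebra $U(\g_+)\subset U(\g)=H/\lambda H$.

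Combining this with the PBW splitting $U(\g)=U(\g_+)\oplus U(\g_+)U(\g_-)^{+}$ (augmentation ideal of $U(\g_-)$), a standard topological Nakayama argument lifts it to a $\K\ph$-module direct sum decomposition of $H$ with $U_h(\g_+)$ as one summand; hence $U_h(\g_+)$ and $U_h(\g_+)\otimeshat U_h(\g_+)$ are $\K\ph$-module direct summands of $H$ and of $H\otimeshat H$, and in particular are saturated (submodules with torsion-free quotient). This already shows $U_h(\g_+)/\lambda U_h(\g_+)\cong U(\g_+)$ as color algebras, and since $J\equiv 1\bmod\lambda$ by \Cref{th:Jlimit} the coproduct $\Delta=J^{-1}\Delta_0 J$ reduces to $\Delta_0$, so it is an isomorphism of color Hopf algebras. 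Next I would prove closure under $\Delta$ and $S$: by polarization of the $r$-matrix (the preceding proposition), $R\in U_h(\g_+)\otimeshat U_h(\g_-)$, so $\im p_+\subset U_h(\g_+)$ and hence $\U_+\subset U_h(\g_+)$. For $x\in U_h(\g_+)$ the lemma $U_h(\g_+)\otimes_{\K\ph}\K\laur\lambda=\U_+\otimes_{\K\ph}\K\laur\lambda$ gives some $n$ with $\lambda^n x\in\U_+$; the lemma that $\U_+$ is closed under coproduct and antipode then yields $\lambda^n\Delta(x)\in\U_+\otimeshat\U_+\subset U_h(\g_+)\otimeshat U_h(\g_+)$ and $\lambda^n S(x)\in\U_+\subset U_h(\g_+)$, so by saturation $\Delta(x)\in U_h(\g_+)\otimeshat U_h(\g_+)$ and $S(x)\in U_h(\g_+)$. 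With $\varepsilon(U_h(\g_+))\subset\K\ph$ this makes $U_h(\g_+)$ a color Hopf subalgebra of $H$; as $H$ has trivial associator, $U_h(\g_+)$ is an honest topologically free color Hopf $\K\ph$-algebra with $U_h(\g_+)/\lambda\cong U(\g_+)$, i.e.\ a QUEA.

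Finally, for the quantization statement I would invoke that $H$ is a quantization of the color Lie bialgebra $\g=D(\g_+)$, so that $\delta_\g=\tfrac1\lambda(\Delta-\Delta^\opp)\bmod\lambda$ on $U(\g)$. Because $U_h(\g_+)$ is a color Hopf subalgebra reducing to $U(\g_+)$, and $U_h(\g_+)\otimeshat U_h(\g_+)$ and $H\otimeshat H$ are $\lambda$-torsion-free, the same formula computed inside $U_h(\g_+)$ gives the restriction of $\delta_\g$ along $U(\g_+)\hookrightarrow U(\g)$; on $\g_+$ this equals $x\mapsto x\cdot r=[\Delta_0(x),r]$, which lies in $\g_+\otimes\g$ a priori and, being also in $U(\g_+)\otimeshat U(\g_+)$, in fact lies in $(\g_+\otimes\g)\cap(U(\g_+)\otimes U(\g_+))=\g_+\otimes\g_+$ by PBW, where it is precisely the cobracket of $\g_+$ regarded as a Lie sub-bialgebra of its double (see \citep{colorlie}). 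Hence $U_h(\g_+)$ is a quantization of $\g_+$, and symmetrically $U_h(\g_-)$ of $\g_-$. The step I expect to require the most care is the descent from $\U_\pm$ to $U_h(\g_\pm)$, i.e.\ getting the $\K\ph$-module bookkeeping right (topological freeness, the PBW-induced direct summand, saturation, absence of $\lambda$-torsion) so that closure under $\Delta$ and $S$ genuinely transports from $\U_\pm$ to $U_h(\g_\pm)$; the diagrammatic and cohomological computations underlying polarization and the reduction modulo $\lambda$ carry over verbatim from the classical and super cases (cf.\ \citep{etingofbook,geer}), since every structure map involved has degree $0$ and the commutation factor therefore causes no obstruction.
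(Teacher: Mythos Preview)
Your proposal is correct and follows the same route as the paper: polarization of $R$ gives $\U_+\subset U_h(\g_+)$, the two preceding lemmas supply closure of $\U_+$ under $\Delta,S$ and agreement with $U_h(\g_+)$ after inverting $\lambda$, and then one descends to $U_h(\g_+)$. The paper's own proof is the single sentence ``From the previous lemmas it follows that it is a color Hopf subalgebra of $U_h$'', so your write-up simply makes explicit the steps the paper suppresses --- in particular the saturation argument via the PBW direct-summand decomposition and the verification that the induced cobracket on $\g_+$ is the original one --- without departing from the intended strategy.
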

\begin{proof}
From the previous lemmas it follows that, it is a color Hopf subalgebra of $U_h$. 
\end{proof}

\section{Quantization of triangular color Lie bialgebras}

Let $\mathfrak{a}$ be a not necessarily finite dimensional triangular color Lie bialgebra, then we define $\g_+ :=\{( 1 \otimes f)(r), f \in \mathfrak{a}^* \} $, 
$\g_- :=\{( f \otimes 1)(r), f \in \mathfrak{a}^* \} $ and $\g = \g_+ \oplus \g_-$.
One can identify $\g_-$  with $\g_+$ via the map $\chi(f)= ( f\otimes \id)(r)$.
Then one can define a Lie bracket on $\g$, such that for $x,y \in \g_\pm$ it is the Lie bracket in $\g_\pm$ and for $x \in \g_+, y \in \g\-$ it is defined by
\begin{equation}
[x,y] :=  (\ad^* x)(y) - \epsilon(x,y) (\ad^* y)(x)
.\end{equation}

One can define a map  $\pi: \g \to \mathfrak{a}$, such that the restriction to $\g_+$ and $\g_-$ is the embedding. With this one has 
\begin{equation}
\pi([x,y]) = [ \pi(x),\pi(y)].
\end{equation}

\begin{prop}
The bracket on $\g$ defined above actually is a color Lie bracket, and the natural pairing gives an invariant inner product, so in fact a color  Manin triple.
\end{prop}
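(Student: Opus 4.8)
The plan is to verify the color Manin triple axioms for the data $(\g, \g_+, \g_-, (\cdot,\cdot))$, where the bilinear form is the one induced by the natural pairing between $\g_+ \cong \mathfrak a^*$-image and $\g_-$ coming from the triangular $r$-matrix. First I would set up the pairing explicitly: for $x = (1\otimes f)(r) \in \g_+$ and $y = (g \otimes 1)(r) \in \g_-$, declare $(x,y) := \langle f, \chi^{-1}(y)\rangle$ (or symmetrically in terms of $g$), extended by zero on $\g_\pm \times \g_\pm$. This is non-degenerate by construction since $\chi$ is a bijection $\mathfrak a^* \to \g_+$ and the evaluation pairing $\mathfrak a^* \times \mathfrak a \to \K$ is non-degenerate when restricted appropriately; the isotropy of $\g_\pm$ is then immediate, and the symmetry condition $(a,b) = \epsilon(a,b)(b,a)$ follows from the antisymmetry $r = -\tau(r)$ of the triangular $r$-matrix together with the defining bicharacter relation $\epsilon(a,b)\epsilon(b,a)=1$.

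Next I would check that the bracket on $\g$ is a color Lie bracket. Color antisymmetry is built into the definition of the cross term $[x,y] = (\ad^* x)(y) - \epsilon(x,y)(\ad^* y)(x)$ and holds on each $\g_\pm$ since these are already color Lie algebras. For the color Jacobi identity I would split into cases according to how many of the three arguments lie in $\g_+$ versus $\g_-$; the all-plus and all-minus cases are inherited, and the mixed cases reduce — after using the compatibility condition $\delta([a,b]) = a\cdot\delta(b) - \epsilon(a,b)\, b\cdot\delta(a)$ of the color Lie bialgebra $\mathfrak a$ and the co-Jacobi identity — to identities among coadjoint actions. The cleanest route is to use the map $\pi: \g \to \mathfrak a$ with $\pi([x,y]) = [\pi(x),\pi(y)]$: one transports the potential Jacobiator through $\pi$, observes it lands in the kernel of $\pi$, and then uses non-degeneracy of the pairing (i.e., test against $\g_\mp$) to conclude it vanishes. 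This is the standard argument that realizes $\g$ as the double, only now carrying the commutation factor $\epsilon$ through every Koszul sign.

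Finally I would verify invariance: $([a,b],c) + \epsilon(a,b)(b,[a,c]) = 0$, equivalently $([b,a],c) = (b,[a,c])$. Again decompose by membership in $\g_\pm$. The only nontrivial cases involve at least one argument from each summand; there the bracket's cross term is defined precisely via $\ad^*$, so invariance becomes the defining adjointness of the coadjoint action with respect to the evaluation pairing, i.e. $\langle (\ad^* x) f, z\rangle = -\epsilon(\ldots)\langle f, [x,z]\rangle$, which is how $\ad^*$ is defined in the first place (up to the color signs one must track carefully). Assembling non-degeneracy, isotropy, symmetry, the Jacobi identity, and invariance gives the color Manin triple.

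\emph{Main obstacle.} I expect the bookkeeping of commutation factors in the mixed-case Jacobi identity to be the real work: one must confirm that every $\epsilon$-factor produced by the Koszul rule when commuting graded elements past one another matches exactly what the color Lie bialgebra compatibility and co-Jacobi identities supply, with no leftover sign discrepancy. In the non-graded Drinfeld-double proof this is automatic; here each reassociation in the triple-bracket expansion contributes a bicharacter value, and the verification hinges on repeated use of $\epsilon(f+g,h)=\epsilon(f,h)\epsilon(g,h)$ and $\epsilon(f,g)\epsilon(g,f)=1$ to collapse these. Once one is confident the signs are consistent (which can be checked abstractly by noting everything is a $\grVec$-internal statement, so it must hold by the same diagrammatic proof as in the classical case), the remaining steps are routine.
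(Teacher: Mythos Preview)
The paper states this proposition without proof, so there is no argument to compare against directly; your outline is essentially the standard verification that the Drinfeld double of a (color) Lie bialgebra is a Lie algebra carrying a Manin-triple structure, and in broad strokes it is correct. The implicit justification in the paper is presumably the earlier Proposition on the double together with the Theorem relating color Lie bialgebras and color Manin triples: once one observes that the bracket you are asked to verify is exactly the double bracket on $D(\g_+)$ (with $\g_- \cong \g_+^*$ via $r$), the result follows from those earlier statements and the references to \cite{colorlie}.

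Two points deserve attention in your write-up. First, your claim that $\chi:\mathfrak a^*\to\g_+$ is a bijection is not right: $\chi(f)=(f\otimes\id)(r)$ surjects onto $\g_-$ but has nontrivial kernel whenever $\mathfrak a$ is strictly larger than the span of the legs of $r$. The non-degeneracy of the pairing you want is between $\g_+$ and $\g_-$, and it comes from the fact that $r$, viewed as an element of $\g_+\otimes\g_-$, induces an isomorphism $\g_+^*\cong\g_-$; you should set up the pairing through this identification, not through $\mathfrak a^*$.

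Second, and more seriously, your proposed shortcut for the Jacobi identity via $\pi$ has a gap. You argue that $\pi(j(a,b,c))=j(\pi a,\pi b,\pi c)=0$, hence the Jacobiator lies in $\ker\pi$, and then say non-degeneracy of the pairing finishes it. But $\ker\pi$ is the antidiagonal $\{(x,-x):x\in\g_+\}\subset\g_+\oplus\g_-$ (since $\g_+$ and $\g_-$ embed as the \emph{same} subspace of $\mathfrak a$ in the triangular case), which is not zero, and pairing an element of $\ker\pi$ against $\g_\mp$ does not force it to vanish without further input. What actually closes the argument is either the direct case-split you also mention (reducing the mixed Jacobi identities to the cocycle and co-Jacobi conditions on $\g_+$), or the recognition that $(\g,\g_+,\g_-)$ is literally $D(\g_+)$, so Jacobi is inherited from the general double construction. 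Your sign-bookkeeping remark at the end is accurate: once you commit to the direct verification, the only real content is tracking the $\epsilon$-factors, and the observation that the argument is internal to the symmetric monoidal category $\grVec$ is the right way to be confident they cancel.
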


Let $\M_\mathfrak{a}$ be the category of $\mathfrak{a}$-modules, with morphisms given by 
$\Hom(V,W) = \Hom_\mathfrak a(V,W)\ph$, this again can be viewed as a category  enriched over $\grVec$.  Using  morphisms $\pi$ one can  define the pullback functor $\pi^*: \M_\mathfrak{a} \to \M_g$ to the Drinfeld category of $\g$.  One can also pullback the monoidal structure along this functor.  

$\Omega := r + \tau( r)$ is $\g$ invariant, this is needed to pullback the  monoidal structure. 

Using the pullback functor  $\pi^*$  and the Verma modules defined before one can define a functor $F: \M_\mathfrak{a} \to \A$ by 
\begin{equation}
F(V) := \Hom_{\M_\h} (M_+ \otimes M_-, \pi^*(V)).
\end{equation}
$F$ is again isomorphic to the forgetful functor, and so we have $H :=\End(F) =  U(\mathfrak{a\ph})$.  In the same way as before one can define a tensor structure on $F$, and with this a deformed bialgebra on $H$. 

Note that if $\mathfrak{a}$ was in fact triangular then, we have $\Omega=0$ and the Hopf algebra is  also triangular.  

So essentially following the construction in the previous section, one gets the following
\begin{theorem}
Any quasitriangular color Lie bialgebra admits a quasitriangular quantization $U_h(\g)$, and if $\g$ is triangular, the quantization is also triangular.
\end{theorem}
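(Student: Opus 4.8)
The plan is to mirror the construction carried out in Section 3 for the double $D(\g_+)$, but now applied to the Manin triple attached to an arbitrary (quasi)triangular color Lie bialgebra $\g$ via the explicit polarization $\g = \g_+ \oplus \g_-$ described just above. First I would invoke the preceding proposition to know that $(\g,\g_+,\g_-)$ is a genuine color Manin triple, with $\Omega = r + \tau(r)$ a $\g$-invariant symmetric element; this is exactly the data needed to form the quasitriangular color quasi-Hopf algebra $A_{\g,\Omega}$ and the associated Drinfeld category $\M_\g$. Next, using the map $\pi\colon \g \to \mathfrak a$ and the pullback functor $\pi^*\colon \M_{\mathfrak a}\to \M_\g$, I would transport the Verma modules $M_\pm$ and the fiber functor $F(V) = \Hom_{\M_\g}(M_+\otimes M_-, \pi^*(V))$, together with its tensor structure $J$ constructed precisely as in \cref{eq:defJ}. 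By the same Yoneda/PBW argument as in \Cref{th:umiso} and the proposition computing $\End(F)$, one gets $H := \End(F) \cong U(\mathfrak a)\ph$, and twisting by $J$ produces a color Hopf algebra structure on $H$; \Cref{th:Jlimit} gives $J \equiv 1 + \tfrac{\lambda}{2} r \bmod \lambda^2$, so the computation in the proof of the last proposition of Section 3 shows $\delta = \tfrac1\lambda(\Delta - \Delta^\opp)\bmod\lambda$, i.e. $H$ is a quantization of $\g$ in the required sense.

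For the quasitriangular structure I would set $R = (J^\opp)^{-1}\E^{\frac{\lambda}{2}\Omega}$, observe that it is an $r$-matrix by the twist argument (equivalently by direct computation, exactly as in Section 3), and note $R \equiv 1\otimes 1 + \lambda r \bmod \lambda^2$ again by \Cref{th:Jlimit}, since $r + \tau(r) = \Omega$. Thus $(H,R)$ is a quasitriangular quantization of $(\g,r)$. For the triangular case, I would use that when $\g$ is triangular one has $r = -\tau(r)$, hence $\Omega = r + \tau(r) = 0$; then $\E^{\frac\lambda2\Omega} = 1$, the associator $\Phi$ is trivial, $J$ reduces to the twist built purely from $\beta_{23} = \tau$ and the coproducts $i_\pm$, and $R = (J^\opp)^{-1}$. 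The triangularity condition $R_{21}R = \id$ then follows formally: $R_{21} = J^{-1}$ so $R_{21}R = J^{-1}(J^\opp)^{-1}$, and one checks this is the identity because with $\Phi$ trivial the cocycle identity for $J$ forces $J^\opp$ to be the inverse of $J$ up to the appropriate symmetrization — concretely, the construction of $J$ is symmetric under swapping the two tensor factors together with the sign from $\beta_{23}^{\pm1}$, so $J^\opp = J^{-1}$ when $\Omega = 0$.

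The main obstacle, and the step I would spend the most care on, is the verification that $R = (J^\opp)^{-1}\E^{\frac\lambda2\Omega}$ genuinely satisfies the quasitriangularity axioms (the two coproduct-intertwining identities and $R\Delta^\opp = \Delta R$) for the twisted Hopf algebra $H$ coming from the pullback along $\pi^*$. In Section 3 this was handled through the polarization lemma and the fact that the Drinfeld category of a genuine double carries a canonical braiding; here $\M_{\mathfrak a}$ need not be a double, so I would instead argue that $\pi^*$ is a braided monoidal functor from $\M_{\mathfrak a}$ with its own (trivial, since $\mathfrak a$ is an ordinary Hopf algebra before deformation) structure — or more safely, simply redo the computation showing $R$ intertwines $\Delta$ and $\Delta^\opp$ directly, using $\Delta = J^{-1}\Delta_0 J$, $\Delta^\opp = (J^\opp)^{-1}\Delta_0^\opp J^\opp$, $\E^{\frac\lambda2\Omega}$ central-up-to-flip because $\Omega$ is $\g$-invariant, and the cocycle/hexagon relations satisfied by $J$ and $\Phi$ as recorded in Section 2. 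The remaining points — injectivity of $\pi$ restricted to $\g_\pm$, freeness of the relevant modules, and that all structure maps are of degree $0$ — are routine and go through exactly as in the double case, so I would dispatch them by reference.
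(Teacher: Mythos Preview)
Your proposal is correct and follows essentially the same route as the paper. The paper's own ``proof'' of this theorem is in fact nothing more than the sentence ``essentially following the construction in the previous section, one gets the following,'' together with the remark that in the triangular case $\Omega = r + \tau(r) = 0$, so the resulting Hopf algebra is triangular. Your outline --- form the auxiliary Manin triple $(\g,\g_+,\g_-)$, pull back along $\pi^*\colon \M_{\mathfrak a}\to \M_\g$, build the fiber functor $F(V)=\Hom_{\M_\g}(M_+\otimes M_-,\pi^*(V))$ with its tensor structure $J$, identify $H=\End(F)\cong U(\mathfrak a)\ph$, twist, and set $R=(J^\opp)^{-1}\E^{\lambda\Omega/2}$ --- is exactly what the paper intends, and you are simply more explicit than the paper about the steps and about where the verification of the quasitriangular axioms for $R$ lives.

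One small clarification on your triangular argument: you should be careful to distinguish the Casimir $\Omega_\g$ of the auxiliary Manin triple $\g$ (which is never zero) from its image under $\pi\otimes\pi$ in $\mathfrak a\otimes\mathfrak a$, which is $r+\tau(r)$ and vanishes precisely in the triangular case. The relevant point is that $\Omega$ \emph{acts} as zero on modules of the form $\pi^*(V)$, so $\E^{\lambda\Omega/2}$ and $\Phi$ act trivially there; this is what makes $R_{21}R=\id$ hold on $H=\End(F)$, and it is also what the paper has in mind when it says ``$\Omega=0$ and the Hopf algebra is also triangular.'' Your heuristic that $J^\opp=J^{-1}$ in this situation is the right intuition, but the cleaner phrasing is that the action of $\Omega$ on pullback modules vanishes.
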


\section{Second quantization of color Lie bialgebras}

\subsection{Topological spaces}

Let $F$ be a space of functions into a topological space, then the weak topology is the initial topology with respect to the evaluation maps. 
Let $V$ and $W$ be topological vector spaces.  We need a topology for $\Hom(V,W)$. We use the weak one, for which a basis is given by
\begin{equation}
\left\lbrace  f \in \Hom(V,W) | f(v_i) \in U_i, i=1, \ldots, n \right\rbrace_{U_1,\ldots,U_n,v_1,\ldots,v_n}, 
\end{equation}
where $U_i$ are open sets in $W$ and $v_i$ are elements in $V$.
Let $\K$ be a field of characteristic zero, with the discrete topology, and $V$ a topological vector space over $\K$ . Then its topology is called linear if the open subspaces form a basis of neighborhoods of 0. 

Let $V$ be a topological vector space with a linear topology, then $V$ is called separated if the map $V \to \projlim_{U \text{ open subspace}} ( \factor{V}{U})$ is injective, this is e.g. the case when $V$ is discrete, i.e. 0 is an open set.

All topological vector spaces we consider will be linear and separated, so we will just call them topological vector spaces.  

If $V$ is finite dimensional than the weak topology on $\Hom(V,\K)$ is the discrete topology. 

In general a neighborhood basis of zero is given by cofinite dimensional subspaces.  

A topological vector space is called complete if the map $$V \to \projlim_{U \text{ open subspace}} ( \factor{V}{U})$$  is surjective.

Let $V$ and $W$ be topological vectors spaces, we define the topological
tensor product by
\begin{equation}
V \hat \otimes W = \projlim \factor{V}{V'} \otimes \factor{W}{W'},
\end{equation}
where $V',W'$ run over open subspaces of $V$ (resp. $W$). With this we have:

\begin{prop}\label{th:montopo}
Complete vector spaces with continuous linear maps form a symmetric monoidal category.  
\end{prop}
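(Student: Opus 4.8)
The plan is to verify the three defining axioms of a symmetric monoidal category for $(\mathbf{Vec}_{\mathrm{top}}^{\mathrm{compl}}, \hat\otimes)$: that $\hat\otimes$ is a functor, that the associator and unitors (with unit object $\K$) are well-defined natural isomorphisms satisfying the pentagon and triangle, and that the braiding induced by the flip is a natural isomorphism satisfying the hexagon identities and squaring to the identity. The underlying structure is already symmetric monoidal on plain vector spaces, so the only real content is that everything is compatible with taking the projective limit $V\hat\otimes W = \projlim V/V' \otimes W/W'$ and stays within the subcategory of complete separated linearly topologized spaces.

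First I would check that $\hat\otimes$ is well-defined on objects: for complete separated $V,W$, the space $V\hat\otimes W$ is by construction a projective limit of discrete (hence complete separated) vector spaces, and a projective limit of such spaces is again complete separated with its natural linear topology — the open subspaces being the kernels of the projections $V\hat\otimes W \to V/V' \otimes W/W'$. Then I would check functoriality: a continuous linear $f\colon V\to V_1$ sends open subspaces to preimages of open subspaces in a way that induces compatible maps $V/V' \otimes W/W' \to V_1/V_1' \otimes W_1/W_1'$ on a cofinal system, hence a continuous $f\hat\otimes g$ on the limits; bifunctoriality and preservation of composition follow from the universal property of $\projlim$. Next, the unit: $\K$ is discrete, so $\K/\K' $ is either $\K$ or $0$, and $V\hat\otimes\K = \projlim (V/V')\otimes\K = \projlim V/V' = V$ by completeness, giving the left unitor $\ell_V$; similarly $r_V$, and these are natural and continuous in both directions.

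For the associator, I would observe that for any three open subspaces $V',W',U'$ the ordinary vector-space associator gives isomorphisms $(V/V'\otimes W/W')\otimes U/U' \cong V/V'\otimes(W/W'\otimes U/U')$, and that both sides of $(V\hat\otimes W)\hat\otimes U$ and $V\hat\otimes(W\hat\otimes U)$ are projective limits over a common cofinal indexing set of triples of open subspaces (here one uses that the open subspaces of $V\hat\otimes W$ include all the $\ker(V\hat\otimes W\to V/V'\otimes W/W')$, which is enough to compute the outer tensor product); passing to the limit yields a natural continuous isomorphism $a_{V,W,U}$. The pentagon and triangle identities then hold because they hold at each finite stage $V/V'\otimes W/W'\otimes U/U'$ and the projections are jointly monic. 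The braiding is handled identically: the flip $\tau$ on $V/V'\otimes W/W' \to W/W'\otimes V/V'$ (with the sign $\epsilon(|v|,|w|)$ in the graded/color setting, or just the plain flip otherwise) is natural, assembles over a cofinal system to a continuous isomorphism $V\hat\otimes W \to W\hat\otimes V$, squares to the identity, and satisfies the hexagons since each finite stage does.

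The main obstacle — and the only place genuine care is needed — is the cofinality bookkeeping for the associativity isomorphism: one must check that computing $(V\hat\otimes W)\hat\otimes U$ as a projective limit over open subspaces of $V\hat\otimes W$ and of $U$ gives the same limit (cofinally) as the limit over triples $(V',W',U')$, so that it can be identified with the analogous limit for $V\hat\otimes(W\hat\otimes U)$. This rests on the fact that $\hat\otimes$ commutes with the relevant projective limits in each variable — equivalently, that $V\hat\otimes W$ already knows its topology through the basic opens $\ker(V\hat\otimes W \to V/V'\otimes W/W')$ — which follows from the separatedness and the definition of the topology on a projective limit. Once this associativity-is-computed-by-a-cofinal-limit-of-triples lemma is in place, every coherence diagram reduces formally to its finite-dimensional counterpart, which is classical, and continuity of all structure maps and their inverses is automatic since they are built as limits of continuous (indeed, between discrete spaces, automatically continuous) maps.
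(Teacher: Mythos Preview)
The paper states this proposition without proof, so there is nothing to compare against. Your plan is a correct and standard verification: the only substantive step is exactly the cofinality bookkeeping you single out, namely that the basic opens $\ker\bigl(V\hat\otimes W \to V/V'\otimes W/W'\bigr)$ are cofinal among all open subspaces of $V\hat\otimes W$, so that iterated completed tensor products are computed by a single projective limit over triples; once that is in place, all coherence diagrams reduce to their discrete counterparts and hold automatically. One small remark: in this section the spaces are ungraded, so the braiding is the plain flip without a sign; the $\epsilon$-weighted flip enters only in the subsequent ``Topology and Grading'' paragraph.
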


\subsubsection*{Topology and Grading} 

A topological color vector space, is a color vector space $V = \bigoplus_{i \in \Gamma} V_i$, where each space $V_i$ is a topological vector space.

A linear map between topological color vector spaces is continuous if each homogeneous part is continuous.

For a graded vector space we say that it is complete, separated or has a given property if every space $V_i$ has this property.  The tensor product can also be defined by replacing the usual tensor product over vector spaces by the completed one. 
We note that the tensor product involves the direct sum $\bigoplus_{j \in \Gamma} V_i \otimes W_{i-j}$, for which a priori it is not clear whether it is complete. But it turns out that here since the considered  topologies are linear, this is the case. 

In fact using the construction, which defines graded vector spaces as functors, one just replaces the category of vector spaces by the category of complete vector spaces and gets a category of graded complete vector spaces, which is again monoidal due to \Cref{th:montopo}. 


%
%
%

\subsection{Manin triples}

Let $\la$  be a color  Lie bialgebra with discrete topology, i.e. each $\la_i$ is equipped with the discrete topology,  and $\la^*$ its dual, with the weak topology. Since $\la$ is discrete $\la^*$ is the full graded dual. The cocommutator defines a continuous Lie bracket on $\la^*$. We have a natural topology on $\la \oplus \la^*$ and the above defines a continuous Lie bracket with respect  to this topology.

Let $\g$ be a Lie algebra, with a nondegenerate inner product $\Sp{\cdot}{\cdot}$ and $\g_+$ and $\g_-$ be two isotropic Lie subalgebras, i.e. $\Sp{\g_+}{\g_+} = 0$, such that $\g = \g_+ \oplus \g_-$.  Then the inner product defines an embedding $\g_- \to \g^*_+$. 
To get a topology on $\g$, we equip $\g_+$ with the discrete topology and $\g_-$ with the weak topology.  If the Lie bracket on $\g$ is continuous in this topology we call $(\g,\g_+,\g_-)$ a Manin triple. 

To every color Lie bialgebra one can associate a color Manin triple by $(\la \oplus \la^* ,\la,\la^*)$, and conversely every color Manin triple gives a Lie bialgebra on $\g_+$.

\subsection{Equicontinuous $\g$-modules }

Let $M$ be a topological vector space and $\{A_x\}_{x\in X}$ be a family in $\End M$. Then $\{A_x\}_{x \in X}$ is equicontinuous if for all open neighborhoods $U$ of $0$ in $M$ there exists an open neighborhood $V$ such that $A_x V =U$ for all $x\in X$.  

\begin{defn}
Let $M$ be a complete topological color vector space. Then we call $M$ an equicontinuous $\g$-module if there is a continuous color Lie algebra morphism $\pi: \g \to \End(M)$ such that $\{ \pi(\g)\}$ is an equicontinuous family.
\end{defn}

For two equicontinuous $\g$-modules $M,N$, we have that $M \otimeshat N$ is again an equicontinuous $\g$-module. Further on $(V \otimes W) \otimes U$ can be identified with $V \otimes (W \otimes U)$ and $V \otimes W$ with $W \otimes V$ by the flip. So we can define the symmetric monoidal category $\M^e_0$ of equicontinuous $\g$-modules.

We define again the Verma modules $M_\pm$ by $M_\pm = \Ind^\g_{\g_\pm} 1 = U(\g) \otimes_{U(\g_\pm)} 1 $. 

\begin{lemma}
The module $M_-$ equipped with the discrete topology is an equicontinuous $\g$-module. 
\end{lemma}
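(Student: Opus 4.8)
The statement to prove is that the Verma module $M_- = U(\g) \otimes_{U(\g_-)} 1$, equipped with the discrete topology, is an equicontinuous $\g$-module. Since $M_-$ is discrete, every linear subspace (in particular $\{0\}$) is open, so the completeness and separatedness conditions are automatic, and the continuity of the action map $\pi:\g \to \End(M_-)$ is automatic because any linear map into a discrete space is continuous. Thus the only real content is the \emph{equicontinuity} of the family $\{\pi(a)\}_{a \in \g}$: for every open neighborhood $U$ of $0$ in $M_-$ we must produce an open neighborhood $V$ of $0$ with $\pi(a)V \subseteq U$ for all $a \in \g$ simultaneously. Because $M_-$ is discrete we may take $U = \{0\}$ as the worst case (it generates the neighborhood filter), so it suffices to find an open subspace $V$ with $\pi(a)V = 0$ for all $a$. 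But in a discrete space $V = \{0\}$ is open, and $\pi(a)\cdot 0 = 0$ trivially; so at the level of this bare reading the claim is immediate.

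\textbf{What actually needs care.} The subtlety is that $M_-$ is \emph{graded}, $M_- = \bigoplus_{i \in \Gamma} (M_-)_i$, and by the conventions fixed earlier (``for a graded vector space we say that it is complete, separated or has a given property if every space $V_i$ has this property'') the relevant notion of equicontinuity must be checked on each homogeneous component, against neighborhoods of $0$ that are open subspaces of the individual $(M_-)_i$. Moreover $\pi(a)$ for homogeneous $a$ of degree $g$ sends $(M_-)_i$ into $(M_-)_{i+g}$, so the family to control is really $\{\pi(a)\}$ acting \emph{between} graded pieces. First I would unwind the definitions to this level: fix $i \in \Gamma$ and an open subspace $U \subseteq (M_-)_i$; since $(M_-)_i$ is discrete, $U \supseteq \{0\}$ and we need $V_j \subseteq (M_-)_j$ open with $\pi(a)(V_j) \subseteq U$ for all $j$ and all homogeneous $a$ with $j + |a| = i$. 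Again each $(M_-)_j$ is discrete so $V_j = \{0\}$ works. Hence the lemma holds essentially by the observation that a discrete topological vector space, graded or not, makes every linear family equicontinuous.

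\textbf{The one point to state explicitly.} For the statement to be non-vacuous as a building block for $\M^e_0$ one should also record that $M_-$ is genuinely an object of the category of complete topological color vector spaces on which $\g$ acts, i.e.\ that the PBW identification $M_- \cong U(\g_+) 1_-$ exhibits each graded piece $(M_-)_i$ as a $\K$-vector space with the discrete topology, which is complete and separated (here one invokes that the projective limit defining completeness stabilizes because $\{0\}$ is already open). I would therefore organize the proof as: (1) recall $M_- = U(\g_+)1_-$ is $\Gamma$-graded with each component a discrete topological vector space, hence complete and separated; (2) note that the $\g$-action is by graded endomorphisms and is continuous since the target is discrete; (3) verify equicontinuity by exhibiting, for any open $U$, the open subspace $V = \{0\}$ (equivalently, any open $V$) with $\pi(a)V \subseteq U$ for all $a \in \g$, which holds because $\pi(a)\cdot 0 = 0$. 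The ``main obstacle'' is really only notational bookkeeping — correctly transporting the scalar notion of equicontinuity through the graded-by-components convention — rather than any genuine analytic difficulty; the discreteness of $M_-$ trivializes every estimate.
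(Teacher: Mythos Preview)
Your argument for the equicontinuity of the family $\{\pi(a)\}_{a\in\g}$ is fine: with $M_-$ discrete the zero subspace is open, and $\pi(a)\{0\}=\{0\}\subseteq U$ for every $a$ and every open $U$. That part really is trivial.

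The gap is in your treatment of the \emph{continuity of $\pi:\g\to\End(M_-)$}. You write that this is ``automatic because any linear map into a discrete space is continuous,'' but $\End(M_-)$ is \emph{not} discrete. The paper equips $\Hom(V,W)$ with the weak topology, and for infinite-dimensional discrete $M_-$ the weak topology on $\End(M_-)$ has basic neighbourhoods of $0$ of the form $\{f\mid f(v_1)=\dots=f(v_n)=0\}$, which are never $\{0\}$. Equivalently, continuity of $\pi$ amounts to continuity of every evaluation $\g\to M_-$, $a\mapsto a\cdot v$; since the target is discrete, this means the annihilator $\{a\in\g\mid a\cdot v=0\}$ must be \emph{open} in $\g$. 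On $\g_+$ that is free (discrete topology), but on $\g_-\cong\g_+^*$, which carries the weak topology, you must show this annihilator contains a subspace cut out by finitely many evaluations at elements of $\g_+$. That is a genuine statement about the Manin-triple structure: for $v=x\,1_-$ with $x\in\g_+$ one has $a\cdot v=[a,x]_+\,1_-$, and $[a,x]_+$ depends only on finitely many ``coordinates'' of $a$ precisely because the cobracket $\delta(x)\in\g_+\otimes\g_+$ is a finite tensor; one then inducts along the PBW filtration of $M_-\cong U(\g_+)1_-$.

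This is exactly the non-trivial content that the paper's one-line proof imports from Etingof--Kazhdan (and then reduces the color case to the ungraded one degree by degree). So your proposal correctly dispatches the equicontinuity clause but misses the substantive half of the definition; you should either supply the argument above or, as the paper does, cite the ungraded result and observe it passes to each graded component.
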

\begin{proof}
This is true in the non-graded case, see e.g. \citep{ek1}, so it also holds in the color case since it can be checked in each degree.
\end{proof}

We want to define a topology on $M_+$, for this we first  define a topology on $U(\g_-)$.  We have  $U_n(\g_-) \cong \oplus_{k\leq n} S^k \g_-$.  We equip $S^k \g_-$ with the weak topology coming from the embedding into $(\g_+^{\otimes k})^*$.  This gives a topology on $U_n(\g_-)$. Finally we put on $U(\g_-)$, and with this on $M_+$, the topology coming from the inductive limit $\lim U_n(g)=U(g)$.

\begin{lemma}
For all $g\in \g$ the map $\pi_{M_+}(g):M_+ \to M_+$ is continuous.
\end{lemma}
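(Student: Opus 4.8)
The plan is to reduce continuity of $\pi_{M_+}(g)$ to a statement about how elements of $\g$ act on the graded pieces $S^k\g_-$, where the topology has been defined explicitly via the embeddings $S^k\g_- \hookrightarrow (\g_+^{\otimes k})^*$. Since the topology on $M_+ \cong U(\g_-)$ is the inductive-limit topology from the filtration $U_n(\g_-) \cong \bigoplus_{k\le n} S^k\g_-$, a linear map is continuous if and only if its restriction to each $U_n(\g_-)$ is continuous, and that in turn follows from continuity on each $S^k\g_-$ (with values landing in some $U_m(\g_-)$). So first I would decompose the action of a homogeneous $g \in \g$ according to the two summands $\g = \g_+ \oplus \g_-$: write $g = g_+ + g_-$ and treat the two cases separately, since on $M_+ = U(\g)\otimes_{U(\g_+)} 1$ the subalgebra $U(\g_-)$ acts by left multiplication (the "positive part" of the PBW decomposition), while $\g_+$ acts in a more complicated way that must be moved past the $\g_-$-part.

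Second, for $g_- \in \g_-$ the map is just left multiplication by $g_-$ on $U(\g_-)$. This sends $S^k\g_- \to S^{k+1}\g_-$ (in the associated graded) and is continuous because the multiplication $\g_- \otimes S^k\g_- \to S^{k+1}\g_-$ is continuous for the weak topologies: the pairing of $S^{k+1}\g_-$ against $\g_+^{\otimes(k+1)}$ is compatible, via the coproduct of $U(\g_+)$, with the pairing of $S^k\g_-$ against $\g_+^{\otimes k}$, so a cofinite-dimensional (hence open) subspace of $S^k\g_-$ pulls back from one of $S^{k+1}\g_-$. Concretely, I would show $g_- \cdot$ is continuous by exhibiting, for each finite set of functionals $\alpha_1,\dots,\alpha_n \in \g_+^{\otimes(k+1)}$ cutting out a basic open set, a finite set of functionals on $\g_+^{\otimes k}$ whose common kernel is mapped into it — this is where the structure maps of the Manin triple (the coadjoint-type action relating $\g_+$ and $\g_-$) enter.

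Third, for $g_+ \in \g_+$ the action on $M_+$ is determined by moving $g_+$ through $U(\g_-)$ using the commutation relations in $U(\g)$: in the PBW picture, $g_+ \cdot (u\, 1_+) = \pi_{M_+}(g_+)(u)\, 1_+$ where $\pi_{M_+}(g_+)(u)$ is obtained by repeatedly applying $g_+ u' = u' g_+ + [g_+,u']$ and discarding terms that end in a $\g_+$-factor (which kill $1_+$). This is essentially the coadjoint action of $\g_+$ on $U(\g_-)$, which on $S^1\g_- = \g_-$ is exactly $\ad^*$ and on higher symmetric powers is its natural extension by the Leibniz rule. I would argue that this operator preserves or lowers the filtration degree $k$ and is continuous on each $S^k\g_-$ by the same functional-kernel argument: the dual statement is that the adjoint action of $\g_+$ on $\g_+^{\otimes k}$ is given by sums of $\ad$ in each tensor slot, which is a fixed linear map, so the preimage of an open subspace is open. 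The non-graded version of all of this is in \citep{ek1}; since everything is defined degreewise and the commutation factors only contribute invertible scalars, the argument transfers verbatim to the color case.

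The main obstacle I expect is the $\g_+$-case: one has to make precise that the "move $g_+$ past everything and throw away terms hitting the highest weight vector" operation is genuinely a well-defined continuous endomorphism, i.e. that it respects the filtration (does not raise $k$) and that the induced map on each $S^k\g_-$ is continuous for the weak topology. The cleanest way is probably to identify $\pi_{M_+}(g_+)$ explicitly with the (extended) coadjoint action $-\ad^*(g_+)$ on $U(\g_-)$, check this on generators using the bracket formula $[x,y] = (\ad^* x)(y) - \epsilon(x,y)(\ad^* y)(x)$ from the Manin triple, and then observe that on $S^k\g_-$ this action is dual to the fixed finite linear operator $\sum_{i} \id^{\otimes(i-1)} \otimes \ad(g_+) \otimes \id^{\otimes(k-i)}$ on $\g_+^{\otimes k}$, whose transpose is automatically weakly continuous. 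Everything else — the $\g_-$-case, the reduction to degreewise continuity, the passage from the non-color to the color setting — is routine.
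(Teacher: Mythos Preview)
The paper does not actually supply a proof of this lemma; it is stated without argument, with the implicit understanding (as for the neighboring lemmas) that the non-graded case is treated in \citep{ek1} and the color case follows degreewise. Your proposal is a correct and detailed reconstruction of that standard argument: reducing via the inductive-limit topology to the pieces $S^k\g_-$, splitting $g = g_+ + g_-$, and identifying the two resulting operators with (transposes of) fixed linear maps on $\g_+^{\otimes k}$, which are automatically weak-$*$ continuous. This is precisely the Etingof--Kazhdan approach the paper defers to.

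One small point worth tightening in your write-up: for $g_+ \in \g_+$, the commutator $[g_+, x_i]$ has both a $\g_-$- and a $\g_+$-component, and the latter must be pushed further right, producing terms of strictly lower filtration degree. You acknowledge this (``preserves or lowers''), but the continuity of the full map into $U_k(\g_-)$ then requires a short induction on $k$: the top-degree part is continuous as the transpose of the Leibniz-extended adjoint action on $\g_+^{\otimes k}$, and the lower-degree remainder is continuous by the inductive hypothesis. With that inductive step made explicit, your argument is complete and matches what \citep{ek1} does in the ungraded setting.
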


Next we need a topology on $M_+^*$. For this we note that $U_n(\g_-)^* \cong \bigoplus_{k\leq n} S^k \g_+$, so we equip $U_n(\g_-)^*$ with the discrete topology. Since $U(g_-)^*$ is the projective limit of  $U_n(\g_-)^*$, it carries the corresponding topology. 

\begin{lemma}
$M_+^*$ is  an equicontinuous $\g$-module.
\end{lemma}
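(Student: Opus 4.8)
The plan is to mimic the argument that showed $M_-$ (with the discrete topology) is an equicontinuous $\g$-module, but now dualized, using the three facts already established: that $U_n(\g_-)^* \cong \bigoplus_{k \le n} S^k \g_+$ carries the discrete topology, that $M_+^* = U(\g_-)^*$ is the projective limit of the $U_n(\g_-)^*$, and that each $\pi_{M_+}(g)$ is continuous. Recall that $M_+^*$ is complete and separated by construction as a projective limit of discrete (hence complete, separated) spaces, and that the $\g$-action on $M_+^*$ is the dual (contragredient) action $\langle \pi_{M_+^*}(g) f, m\rangle = -\epsilon(g,f)\langle f, \pi_{M_+}(g) m\rangle$ (with the sign dictated by the Koszul rule), which is a color Lie algebra morphism $\g \to \End(M_+^*)$ since it is the dual of one. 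So the only thing to check is (i) continuity of this morphism and (ii) equicontinuity of its image.

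First I would verify continuity of $\pi_{M_+^*}: \g \to \End(M_+^*)$, where $\End(M_+^*)$ carries the weak topology described in the paper (basic opens $\{A : A f_i \in U_i\}$ for $f_i \in M_+^*$, $U_i$ open in $M_+^*$). Since $M_+^*$ is a projective limit, its open subspaces are generated by the kernels of the projections $M_+^* \to U_n(\g_-)^*$; equivalently, a neighborhood basis of $0$ in $M_+^*$ is given by the annihilators $(U_n(\g_-))^\perp$. So continuity of $g \mapsto \pi_{M_+^*}(g)$ amounts to: for each $n$ and each $f \in M_+^*$, the set of $g$ with $\pi_{M_+^*}(g) f \in (U_n(\g_-))^\perp$ is open in $\g$; but $\g_+$ is discrete and $\g_-$ has the weak topology, so I would reduce to checking, degree by degree, that $g \mapsto \langle f, \pi_{M_+}(g) m \rangle$ is continuous in $g$ for fixed $m \in U_n(\g_-) \subset M_+$ and fixed $f$, which follows from the already-proven continuity of $\pi_{M_+}(g)$ together with the fact that $f$ is continuous on $M_+$. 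The grading makes this manageable because, as the paper repeatedly notes, everything can be checked in each homogeneous component, and in each degree we are in the non-graded situation handled in \citep{ek1}.

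Next I would prove equicontinuity: given an open neighborhood of $0$ in $M_+^*$, which without loss of generality is $(U_n(\g_-))^\perp$ for some $n$, I need an open neighborhood $W$ of $0$ in $M_+^*$ with $\pi_{M_+^*}(g) W \subseteq (U_n(\g_-))^\perp$ for all $g \in \g$. The key point is that $\g$ raises the PBW-filtration degree on $M_+ = U(\g_-) 1_+$ by at most one: $\pi_{M_+}(g) U_n(\g_-) \subseteq U_{n+1}(\g_-)$ for every $g \in \g$ (this is immediate for $g \in \g_-$, and for $g \in \g_+$ it follows from the fact that $\g_+$ acts by a derivation-like operator lowering or preserving the degree on the Verma module, as $\g_+$ annihilates $1_+$). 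Dualizing, $\pi_{M_+^*}(g)$ maps $(U_{n+1}(\g_-))^\perp$ into $(U_n(\g_-))^\perp$ for all $g \in \g$ simultaneously, so $W := (U_{n+1}(\g_-))^\perp$ works, uniformly in $g$. This is exactly the mechanism that makes the contragredient of a ``locally finite, filtered'' module equicontinuous.

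The main obstacle I anticipate is bookkeeping with the color signs and the interaction between the direct-sum grading and the topologies: one must be careful that the contragredient action $\langle \pi_{M_+^*}(g) f, m\rangle = -\epsilon(g,f)\langle f, \pi_{M_+}(g) m\rangle$ is genuinely a color Lie algebra morphism (the sign is forced and it does work, precisely because it is the transpose of $\pi_{M_+}$ in the symmetric monoidal category $\grVec$), and that ``open subspace of $M_+^*$'' really is controlled by the filtration-annihilators $(U_n(\g_-))^\perp$ — this uses that $M_+^* = \varprojlim U_n(\g_-)^*$ with each $U_n(\g_-)^*$ discrete, which is given. Once these identifications are in place the filtration-shift argument is short. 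I would therefore structure the proof as: (1) identify the contragredient module structure and note it is a color Lie morphism; (2) describe the neighborhood basis of $0$ in $M_+^*$ via $(U_n(\g_-))^\perp$; (3) prove continuity using continuity of $\pi_{M_+}(g)$ in each degree; (4) prove equicontinuity via the one-step filtration shift $\pi_{M_+}(\g) U_n(\g_-) \subseteq U_{n+1}(\g_-)$; and conclude by invoking \citep{ek1} for the non-graded case that underlies each homogeneous component.
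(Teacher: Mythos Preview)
The paper states this lemma without proof, presumably deferring (as it does for the analogous statement about $M_-$) to the non-graded argument in \citep{ek1}. Your proposal supplies exactly that argument in detail and is correct: the neighborhood basis of $0$ in $M_+^*$ is given by the annihilators $(U_n(\g_-))^\perp$ because $M_+^* = \varprojlim U_n(\g_-)^*$ with each stage discrete, and the uniform filtration shift $\pi_{M_+}(g)\,U_n(\g_-) \subseteq U_{n+1}(\g_-)$ for all $g \in \g$ dualizes to $\pi_{M_+^*}(g)\,(U_{n+1}(\g_-))^\perp \subseteq (U_n(\g_-))^\perp$, which is precisely equicontinuity. One small remark: in step~(3) you invoke ``continuity of $\pi_{M_+}(g)$'' (a statement about a fixed $g$) to prove continuity of $g \mapsto \pi_{M_+^*}(g)$ (a statement about varying $g$); the latter really needs the observation that $\g_+$ is discrete (so nothing to check there) and that on $\g_-$ the map $g \mapsto g\cdot m$ into $U_{n+1}(\g_-)$ is continuous for the weak topologies involved, which is straightforward but not literally the content of the preceding lemma. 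With that clarification your outline is complete and matches what the paper is tacitly assuming.
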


However $M_+$ is not equicontinuous in general.

There is  a Casimir element. 
It corresponds to the identity under the isomorphism of $\la^* \otimeshat \la \to \End(\la)$.

Let $\M^t_\g$ be the category of equicontinuous $\g$-modules  and morphisms 
\begin{equation}
\Hom_{\M^t_\g}(V,W) = \Hom_\g(V,W)\ph,
\end{equation}
where the $\Hom$ on the right denotes the continuous $\g$-module morphisms. 

We define a natural transformation $\gamma$ by $\gamma_{V,W} = \beta_{W,V}^{-1} \in \Hom(V \otimes W, W \otimes V)$ for $V,W \in \M^e$. 

Using the completed tensor product, we can define the structure of a braided monoidal category on $\M^t_\g$ using $\Phi$ and  $\gamma$ similarly to \Cref{ch:quant1}.

\subsection{Tensor functor $F$}

Let $V$ be a complete color space over $\K$. Then the space $V \otimeshat \K\ph$ is  again a complete color space, and carries a natural structure of a topological color $K\ph$-module. A $\K\ph$-module is called complete if it is isomorphic to $V\ph$ for a complete color space $V$.

Let $\A^c$ be the category of complete   color $\K\ph$-modules.  On complete $\K\ph$-modules we define a tensor product $V \tilde{\otimes} W =  \factor{V \otimeshat W}{<1 \otimes \lambda - \lambda\otimes  1>}$ . Then with this a color $\K\ph$-modules by $(V \tilde{\otimes} W)_i = \bigoplus_{j \in \Gamma} (V_j \tilde{\otimes}  W_{i-j})$.

We define a functor $F$ from the category $\M^t_\g$ of equicontinuous $\g$-modules to the category of complete color vector spaces $\A^c$, by
$V \mapsto \Hom(M_-,M_+^* \otimes V)$

We can define a comultiplication on $M_+^*$ by
\begin{equation}
i^*_+ : M^*_+ \otimes M^*_+ \to M^*_+ : i^*_+(f \otimes g)(x) := (f \otimes g)(i_+(x))
\end{equation}
for $f,g \in M^*_+$. This is continuous.

\begin{prop}
There is  isomorphisms $\Psi_V: F(V) \to V$ for all $V \in \M^t_\g$ natural in $V$, given by 
\begin{equation}
f \mapsto (\ev(1_+) \otimes \id) f (1_-)
\end{equation} 
\end{prop}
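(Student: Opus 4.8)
The plan is to exhibit an explicit inverse to $\Psi_V$ and check that both composites are the identity, using the comultiplication $i_+^*$ on $M_+^*$ together with the equicontinuity of the modules involved. First I would observe that $M_+^* \otimes V \cong \Hom(M_+, V)$ (via the completed tensor product, since $M_+^*$ is the dual of $M_+$ and $V$ is complete), so an element of $F(V) = \Hom(M_-, M_+^* \otimes V)$ is the same data as a continuous map $M_- \otimes M_+ \to V$, or equivalently, since $M_+ \otimes M_-$ is isomorphic to $U(\g)$ as a $\g$-module (this is the analogue of \Cref{th:umiso} with the roles of $\pm$ as needed), a continuous $\g$-module map out of $U(\g)$. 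Such a map is determined by its value on $1 \in U(\g)$, and that value corresponds under the identification to $(\ev(1_+) \otimes \id) f(1_-) \in V$. This already shows $\Psi_V$ is a bijection of the underlying sets; one then checks it is $\K\ph$-linear, continuous, and has continuous inverse, which is where the linear topologies and the inductive/projective limit descriptions of the topologies on $M_+$, $M_+^*$, $M_-$ are used.

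For naturality, given a morphism $\phi: V \to W$ in $\M^t_\g$ I would chase the square
\begin{equation}
\Psi_W \circ F(\phi) = \phi \circ \Psi_V,
\end{equation}
which reduces, after unwinding $F(\phi) = \Hom(M_-, (\id_{M_+^*} \otimes \phi) \circ -)$, to the tautology that $(\ev(1_+)\otimes\id)$ commutes with applying $\phi$ in the last tensor slot; the Koszul signs all vanish since $1_+$, $1_-$, $\ev$, and the structure maps are all of degree $0$. The only subtlety is keeping track of the $\K\ph$-coefficients, i.e. that $F$, $\Psi$, and everything in sight is defined over $\K\ph$ and not just $\K$, but this is built into the definition of the categories $\M^t_\g$ and $\A^c$.

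\textbf{Main obstacle.} The substantive point — not a triviality as in the non-topological \Cref{th:umiso} — is verifying that $\Psi_V$ and its inverse are genuinely \emph{continuous} maps of topological $\K\ph$-modules, and that the natural identification $M_+^* \otimes V \cong \Hom(M_+,V)$ used above is an isomorphism of \emph{complete} color vector spaces, not merely of abstract vector spaces. This rests on the fact that $M_+$ carries the inductive-limit topology from the filtration $U_n(\g_-)$ with each $S^k\g_-$ given the weak topology, that $M_+^*$ is correspondingly the projective limit of the discrete spaces $U_n(\g_-)^*$, and that $V$ is equicontinuous; I expect the cleanest route is to check the compatibility on each filtration piece $U_n(\g_-)$ (where things are finite-dimensional in each $\Gamma$-degree, hence the weak topology is discrete and duals behave well) and then pass to the limit, exactly as in the non-color case treated in \citep{ek1}, invoking \Cref{th:montopo} to know the limits assemble into the monoidal category $\A^c$. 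Since each homogeneous component reduces to the ungraded statement, the color grading contributes nothing beyond bookkeeping.
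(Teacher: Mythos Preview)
Your argument is correct and is precisely an unpacking of what the paper records in one line as ``This follows from Frobenius reciprocity'': the identification $M_+^*\otimeshat V\cong\Hom(M_+,V)$ together with tensor--hom adjunction and $M_+\otimes M_-\cong U(\g)$ is exactly Frobenius reciprocity applied twice, and the explicit formula $f\mapsto(\ev(1_+)\otimes\id)f(1_-)$ is the image of $1\in U(\g)$ under the resulting chain of isomorphisms. Your attention to the topological side (continuity, completeness, passage through the filtration on $M_+^*$) is more careful than the paper's terse statement but addresses the same issue in the same way.
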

\begin{proof}
This follows from Frobenius reciprocity.
\end{proof}

This shows that $F$ is natural isomorphic to the forgetful functor.

We now want to define a tensor structure on the functor $F$. Similar to \eqref{eq:defJ} we define, $J_{V\otimes W} ( v \otimes w)$ by 
\begin{align*}
M_- \xrightarrow{i_-} M_- \otimes M_-  \xrightarrow{v \otimes w }
 (M_+^* \otimes V)  \otimes (M_+^* \otimes W) 
  \xrightarrow{\Phi}  M_+^* \otimes ((V  \otimes M_+^*) \otimes W)  \\
   \xrightarrow{\gamma_{23}}  M_+^* \otimes (( M_+^* \otimes V  ) \otimes W)
 \xrightarrow{\Phi} ( M_+^* \otimes  M_+^*) \otimes (V  \otimes W) 
  \xrightarrow{i_+^* \otimes \id \otimes \id}  M_+^*  \otimes (V  \otimes W) 
\end{align*}

That is (with out associators)
\begin{equation}
J_{VW} (v \otimes w)= (i_+^* \otimes \id \otimes \id ) \circ  (\id \otimes \gamma \otimes \id) \circ  (v \otimes w)  \circ i_-.
\end{equation}

Again one can write down a version without using explicitly  the maps $v, w$, which do not exist in the categorical sense for the graded case.

\begin{lemma}
We have
\begin{equation}
\Phi \circ (i_- \otimes \id) \circ  i_- = (\id \otimes i_-) \circ i_-
\end{equation}
and 
\begin{equation}
\Phi \circ (i_+^* \otimes \id) \circ  i_+^* = (\id \otimes i_+^*) \circ i_+^*
\end{equation}
i.e. $i_-$ and $i_+^*$ are coassociative in $\Hom(M_-,(M_-)^{\otimes 3})$ resp. $\Hom(M_+^*,(M_+^*)^{\otimes 3})$.
\end{lemma}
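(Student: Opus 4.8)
The plan is to mimic the proof of the analogous coassociativity statement for $i_+$ and $i_-$ that appeared earlier (Lemma on $i_\pm$ following \citep[Lemma 2.3]{ek1}), transporting it through duality for the $i_+^*$ case. For the statement about $i_-$, I would argue exactly as before: the coproduct on $U(\g_+)$ (which is what governs $i_-: M_- \to M_- \otimes M_-$, since $M_- = U(\g_+) 1_-$ as a $U(\g_+)$-module) is coassociative, so $(i_- \otimes \id) i_- = (\id \otimes i_-) i_-$ as plain maps $M_- \to M_-^{\otimes 3}$. Then, since $\Phi$ is $\g$-invariant, to check $\Phi \circ (i_- \otimes \id) i_- = (i_- \otimes \id) i_-$ it suffices to evaluate on the cyclic generator $1_-$, where both sides give $1_- \otimes 1_- \otimes 1_-$; and $\Phi \cdot (1_- \otimes 1_- \otimes 1_-) = 1_- \otimes 1_- \otimes 1_-$ follows from the fact that $\Omega$ annihilates $1_- \otimes 1_-$ (because $\g_-$ acts trivially on $1_-$ and $\Omega = r + \tau(r)$ with $r = m_i \otimes p_i$, $p_i \in \g_+$, $m_i \in \g_-$) together with the definition of $\Phi$ as a universal expression in the $\Omega_{ij}$.

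For the $i_+^*$ statement I would dualize. By definition $i_+^*(f \otimes g)(x) = (f \otimes g)(i_+(x))$, so $i_+^*$ is the transpose of $i_+: M_+ \to M_+ \otimes M_+$, and iterating, $(i_+^* \otimes \id) i_+^*$ and $(\id \otimes i_+^*) i_+^*$ are the transposes of $(i_+ \otimes \id) i_+$ and $(\id \otimes i_+) i_+$ respectively. Since $i_+$ was already shown to be coassociative in the earlier lemma, namely $\Phi \circ (i_+ \otimes \id) i_+ = (\id \otimes i_+) i_+$, taking transposes and using that the transpose of the action of $\Phi$ on $M_+^{\otimes 3}$ is the action of $\Phi$ (in the appropriate sense, using that $\Omega$ is symmetric so $\Omega_{ij}^* = \Omega_{ij}$ on the dual with the relevant sign conventions) gives $\Phi \circ (i_+^* \otimes \id) i_+^* = (\id \otimes i_+^*) i_+^*$ on $\Hom(M_+^*, (M_+^*)^{\otimes 3})$. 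One must be a little careful that the duality pairing is compatible with the braiding/associator actions; this is where the Koszul sign bookkeeping enters.

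I would also double-check continuity: the lemma is stated in $\Hom(M_-, (M_-)^{\otimes 3})$ and $\Hom(M_+^*, (M_+^*)^{\otimes 3})$ in the topological category, so I must note that $i_-$ and $i_+^*$ are continuous (already recorded above for $i_+^*$, and $i_-$ is a $\g$-module map between discrete modules hence continuous) and that $\Phi$ acts continuously on equicontinuous modules, so all composites live in the right Hom-spaces.

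The main obstacle I expect is not the algebra, which is essentially a transposition of an already-proven identity, but rather making sure the sign conventions and the identification of $\Phi$ acting on $M_+^*$ with the transpose of $\Phi$ acting on $M_+$ are correct in the color setting — i.e. that the Koszul rule and the symmetry of $\Omega$ combine so that the pentagon-solving element $\Phi(\Omega_{12},\Omega_{23})$ dualizes to the same expression rather than to some permuted or sign-twisted variant. Once that identification is nailed down, the coassociativity of $i_+^*$ is immediate from the coassociativity of $i_+$.
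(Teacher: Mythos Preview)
The paper does not supply a proof of this lemma; it is stated and immediately followed by the next proposition. The implicit template is the earlier lemma on $i_\pm$, and your argument for $i_-$ is exactly that proof transported verbatim, which is correct and is clearly what the authors intend.

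For $i_+^*$ your dualization strategy is the right one, but watch the directions. As defined in the paper, $i_+^*: M_+^* \otimes M_+^* \to M_+^*$ is a \emph{multiplication} (the transpose of the comultiplication $i_+$), so the transpose of $(i_+ \otimes \id)\circ i_+$ is $i_+^* \circ (i_+^* \otimes \id)$, not $(i_+^* \otimes \id)\circ i_+^*$ as you (and the paper's displayed formula) write. In other words, the identity for $i_+^*$ as printed has its compositions reversed and really lives in $\Hom((M_+^*)^{\otimes 3}, M_+^*)$; the way $i_+^*$ is used in the definition of $J_{VW}$ and in the proof that $U_h(\g_+)$ is a subalgebra confirms it is a product, so this is a typo in the lemma, not in your reasoning. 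Once read correctly, your transposition argument produces precisely the needed associativity statement. The sign/transpose concern you flag is legitimate but resolves: $\Phi$ has degree $0$ and is a series in the $\Omega_{ij}$ with $(S\otimes S)(\Omega)=\Omega$ (both tensor legs are primitive), so the contragredient action of $\Phi$ on $(M_+^*)^{\otimes 3}$ matches what you need when you dualize the identity $\Phi \circ (i_+ \otimes \id)\circ i_+ = (\id \otimes i_+)\circ i_+$.
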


\begin{prop}
The maps $J_{VW}$ are isomorphisms and  define a tensor structure on $F$.
\end{prop}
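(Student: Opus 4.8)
The plan is to mimic exactly the argument already used in Section~\ref{ch:quant1} for the functor $F(V) = \Hom_{\M_\g}(M_+\otimes M_-,V)$, transporting each step through the dualization $M_+ \leadsto M_+^*$ and the replacement of $\beta$ by $\gamma = \beta^{-1}$. First I would check that the composite defining $J_{VW}$ is well defined as a morphism in $\A^c$: every arrow in the displayed chain ($i_-$, the ``evaluation'' $v\otimes w$, the associators $\Phi$, the braiding $\gamma_{23}$, and $i_+^*\otimes\id\otimes\id$) is continuous and of degree~$0$, hence so is the composite, and naturality in $V,W$ is automatic since $J$ is built from natural transformations. In the graded setting one uses the coend/$\Hom$-space reformulation (as for the ``better definition'' of $J$ in Section~\ref{ch:quant1}) so that one never literally writes the non-existent evaluation maps $v,w$; this is purely bookkeeping.

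Next I would prove that each $J_{VW}$ is an isomorphism. Under the natural isomorphism $\Psi_V: F(V)\xrightarrow{\sim}V$ from the previous proposition, $J_{VW}$ becomes an endomorphism of $V\otimes W$ (topologically free over $\K\ph$), and by the computation analogous to \Cref{th:Jlimit} one has $J_{VW}\equiv \id \bmod \lambda$: indeed modulo $\lambda$ all of $\Phi$, $\gamma$ reduce to the identity/flip, $i_-$ and $i_+^*$ reduce to the trivial (co)multiplications on $1_-$ and $\ev(1_+)$, and the whole composite collapses to the identity on $V\otimes W$ after applying $\Psi$. A $\K\ph$-linear endomorphism of a topologically free module that is the identity mod $\lambda$ is invertible (geometric series in $\lambda$), so $J_{VW}$ is an isomorphism.

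The substantive step is the hexagon/associativity constraint for a tensor structure, namely
\begin{equation*}
F(\Phi_{UVW}) \circ J_{U\otimes V,W}\circ (J_{U,V}\otimes\id) = J_{U,V\otimes W}\circ(\id\otimes J_{V,W}).
\end{equation*}
Here I would invoke the coassociativity of $i_-$ and of $i_+^*$ established in the preceding lemma, together with the pentagon identity for $\Phi$ and the (braid/hexagon) compatibility of $\gamma$ with $\Phi$ in the braided monoidal category $\M^t_\g$. Expanding both sides into the long composite of Verma-module maps, one rewrites each side with all associators collected at the ends and all $\gamma$'s in standard position; the identity then reduces to a diagram chase that is formally identical to the one in \citep{etingofbook} (resp.\ \citep{ek1}) for the non-color Etingof--Kazhdan construction. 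Since that chase is purely diagrammatic — it uses only the axioms of a braided monoidal category and the coassociativity of $i_\pm$, $i_+^*$ — it transfers verbatim to the color case, the only change being the bookkeeping of Koszul signs, which are consistently absorbed because every structural map in sight is of degree~$0$. The main obstacle is precisely organizing this diagram chase so that the braiding $\gamma=\beta^{-1}$ (rather than $\beta$) and the dualized comultiplication $i_+^*$ (rather than $i_+$) enter with the correct variances; once the coassociativity lemma is in hand this is routine.

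Finally, combining the two halves: $J$ is a natural isomorphism satisfying the associativity constraint, and $\Psi$ intertwines it with the forgetful functor's trivial tensor structure, so $(F,J)$ is a tensor functor and each $J_{VW}$ is an isomorphism, which is the assertion.
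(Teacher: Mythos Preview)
Your proposal is correct and follows essentially the same approach as the paper: the paper's proof consists of the single sentence ``They are isomorphisms because they are isomorphisms modulo $\lambda$,'' which is exactly your second step, and the tensor-structure verification is (as in the earlier Theorem in \Cref{ch:quant1}) implicitly deferred to the diagrammatic argument in \citep{etingofbook}, which you have simply spelled out in more detail.
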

\begin{proof}
They are isomorphisms because they are isomorphisms modulo $\lambda$. 
\end{proof}

\subsection{Quantization of color Lie bialgebras}

Let $H=\End(F)$ be the algebra of endomorphisms of the fiber  functor $F$, where $\End(F)$ is again to be understood in the enriched sense.
Let $H_0$ be the endomorphism algebra of the forgetful functor from $\M^e_0$ to the category of complete color vector spaces.  The algebra $H$ is naturally isomorphic to $H_0\ph$ .

Let $F^2: \M^e \times \M^e \to \A^c$ be the bifunctor defined by $F^2(V,W) =F(V \otimes W)$ and $H^2 = \End(F^2)$  then $H \otimes H \subset H^2$ but not necessarily $H^2 = H \otimes H$. 

$H$ has a ``comultiplication'' $\Delta: H \to H^2$ , defined by 
\begin{equation}
\Delta(a)_{VW}(v \otimes w)=  J_{VW}^{-1} a_{V\otimes W} J_{VW} (v \otimes w).
\end{equation}

\subsection{The algebra $U_h(g_+)$}

For $x \in F(M_-)$ we define $m_+(x) \in \End(F)$ by  
\begin{equation}
m_+(x)(v) = \epsilon(x,v) i_+^* \otimes \id \circ \Phi^{-1} \circ (\id \otimes v) \circ x
\end{equation}
for $v \in F(V)$.

We define $U_h(g_+) \subset H$ as the image of $m_+$. 
We have  $m_+$ is an embedding since it is so modulo  $\lambda$.

\begin{prop}
$U_h(\g_+)$ is a subalgebra of  $H$.
\end{prop}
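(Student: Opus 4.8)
The plan is to show that the image of $m_+$ is closed under the multiplication of $H = \End(F)$, i.e. that for $x, y \in F(M_-)$ there exists $z \in F(M_-)$ with $m_+(x) \circ m_+(y) = m_+(z)$, and to identify $z$ explicitly in terms of $x$, $y$, $i_-$, $i_+^*$ and $\Phi$. First I would unwind the definition of $m_+(x)$: it is the natural endomorphism of $F$ whose component at $V$ sends $v \in F(V)$ to $\epsilon(x,v)\,(i_+^* \otimes \id)\circ \Phi^{-1}\circ(\id \otimes v)\circ x$. Composing $m_+(x)$ after $m_+(y)$ means feeding $m_+(y)(v)$ into the $V$-slot, so the core of the argument is a diagram chase in $\Hom(M_-, M_+^* \otimes V)$ built from two copies of $i_-$, two copies of $i_+^*$, the associators $\Phi^{\pm 1}$, and the map $v$.

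The key steps, in order: (1) Write out $m_+(x)\circ m_+(y)$ applied to $v$ as a morphism $M_- \to M_+^* \otimes V$, carefully tracking the Koszul signs $\epsilon(x,v)$, $\epsilon(y,v)$ and any sign coming from the flip implicit in reassociating the $M_+^*$ factors. (2) Use the coassociativity of $i_-$ (the first identity in the preceding Lemma, $\Phi\circ(i_-\otimes\id)\circ i_- = (\id\otimes i_-)\circ i_-$) to rewrite the iterated comultiplication on $M_-$ so that the two ``input'' copies are grouped compatibly with the output side. (3) Use the coassociativity of $i_+^*$ (the second identity in that Lemma) together with a pentagon/naturality manipulation of $\Phi$ to collapse the two $i_+^*$'s and the intervening associators into a single $i_+^*$. (4) Read off that the result equals $m_+(z)$ where $z = x\circ(\id\otimes y)\circ\Phi\circ(i_-\otimes\id)\in F(M_-)$ — precisely the analogue of the formula $z = x\circ(\id\otimes y)\circ\Phi\circ(i_+\otimes\id)$ used for $U_\lambda(\g_+)$ in \Cref{ch:quant1}. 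This shows the image of $m_+$ is closed under multiplication; since $m_+$ is $\K\ph$-linear and the identity $\id_F$ equals $m_+$ applied to the canonical element $M_- \xrightarrow{\ev} 1 \hookrightarrow M_+^* $ (coming from $1_+^* \in M_+^*$), $U_h(\g_+)$ contains the unit, so it is a subalgebra.

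The main obstacle I expect is step (3): bookkeeping the associators and the braiding-induced signs so that the two separate ``$(i_+^* \otimes \id \otimes \id)\circ(\id\otimes\gamma\otimes\id)$'' blocks from the two factors of $m_+$ genuinely recombine. One must insert the pentagon identity for $\Phi$ at the right place and use that $\Phi$ is $\g$-invariant (hence commutes with the relevant $\g$-module maps), exactly as in the coassociativity lemma whose proof reduces everything to the action on $1_- \otimes 1_-$; the cleanest route is to mimic that reduction here, checking the identity after evaluating at $1_- \in M_-$ and using that $\Omega$ annihilates $1_-\otimes 1_-$. The color signs are handled uniformly by the Koszul rule, since all the structure maps $i_-$, $i_+^*$, $\gamma$, $\Phi$ are of degree $0$; the only place a sign survives is the prefactor $\epsilon(x,v)$, and one checks it reassembles into $\epsilon(z,v)$ because $|z| = |x| + |y|$. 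This is entirely parallel to the non-color computation in \citep[Section 19]{etingofbook} and to the argument already given for $U_\lambda(\g_+)$, so no new idea is needed beyond careful sign tracking.
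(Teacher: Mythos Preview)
Your overall strategy—show $m_+(x)\,m_+(y)=m_+(z)$ for an explicit $z\in F(M_-)$ and conclude that the image of $m_+$ is multiplicatively closed—is exactly what the paper does. However, you have misread the definition of $m_+$. The map
\[
m_+(x)(v)=\epsilon(x,v)\,(i_+^*\otimes\id)\circ\Phi^{-1}\circ(\id\otimes v)\circ x
\]
involves only $i_+^*$, the associator $\Phi$, the element $x\in F(M_-)$, and the argument $v$; there is \emph{no} occurrence of $i_-$ and no occurrence of the braiding $\gamma$. Those maps enter the tensor structure $J_{V,W}$, not $m_+$. Consequently your step~(2) (``use coassociativity of $i_-$''), the ``two $(i_+^*\otimes\id\otimes\id)\circ(\id\otimes\gamma\otimes\id)$ blocks'' in step~(3), and your proposed formula $z=x\circ(\id\otimes y)\circ\Phi\circ(i_-\otimes\id)$ are all based on a diagram that is not the one at hand.

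The actual computation is shorter than you anticipate. Writing out $m_+(x)\,m_+(y)(v)$ and tracking the Koszul signs yields
\[
\epsilon(x,yv)\,\epsilon(y,v)\,(i_+^*\otimes\id)\,\Phi^{-1}\,(\id\otimes i_+^*\otimes\id)\,\Phi^{-1}_{2,3,4}\,(\id\otimes\id\otimes v)\,(\id\otimes y)\,x.
\]
Now only the \emph{associativity} of $i_+^*$ (the second identity in the preceding Lemma) together with the pentagon identity for $\Phi$ is needed to slide both copies of $i_+^*$ together and obtain $m_+(z)(v)$ with
\[
z=(i_+^*\otimes\id)\,\Phi^{-1}\,(\id\otimes y)\,x\in F(M_-).
\]
No evaluation at $1_-$, no $\Omega$-annihilation, and no $i_-$ enter. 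Your remark on signs is correct: since $|z|=|x|+|y|$ the prefactors recombine to $\epsilon(z,v)$, with an extra $\epsilon(x,y)$ absorbed by the Koszul rule in the formation of $z$.
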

\begin{proof}
Since $i_+^*$ is coassociative we get
\begin{align*}
&m_+(x)m_+(y)(v)
=  \epsilon(x,yv) \epsilon(y,v) (i^*_+ \otimes \id) \Phi^{-1} ( \id \otimes i^*_+ \otimes \id) \Phi^{-1}_{2,3,4}  (\id \otimes \id \otimes v)( \id \otimes y) x  \\
&=  \epsilon(x,y) \epsilon(x,v) \epsilon(y,v) (i^*_+ \otimes \id)  (  i^*_+ \otimes  \id \otimes\id)\Phi^{-1}_{1,2,3}\Phi^{-1}_{1,23,4} \Phi^{-1}_{2,3,4} (\id \otimes \id \otimes v)( \id \otimes y) x \\
&=  \epsilon(x,y)  \epsilon(xy,v) (i^*_+ \otimes \id)  (  i^*_+ \otimes \id \otimes\id)\Phi^{-1}_{12,3,4}\Phi^{-1}_{12,3,4}  (\id \otimes \id \otimes v)( \id \otimes y) x \\
&=  \epsilon(x,y)  \epsilon(xy,v) (i^*_+ \otimes \id) \Phi^{-1} (\id \otimes v) ( \otimes i^*_+ \otimes \id)\Phi^{-1}  ( \id \otimes y) x  \\
&=    \epsilon(xy,v) (i^*_+ \otimes \id) \Phi^{-1} (\id \otimes v) z \\
&= i(z) v,
\end{align*}
where $z = ( \otimes i^*_+ \otimes \id)\Phi^{-1}  ( \id \otimes y) x $.
\end{proof}

\begin{prop}
The algebra $U_h(\g_+)$ is closed under the coproduct. 
\end{prop}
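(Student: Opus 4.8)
The plan is to mimic the classical Etingof--Kazhdan argument (as in \citep[§19]{etingofbook}) for showing that $U_h(\g_+)$ is a coideal-type subalgebra, adapted to the enriched, colored setting. Recall that $U_h(\g_+)$ was defined as the image of the embedding $m_+$, and that $\Delta$ on $H=\End(F)$ is induced by conjugation with the tensor structure $J$. So the goal is to show that for $x\in F(M_-)$ the element $\Delta(m_+(x))$ lies in $U_h(\g_+)\otimes U_h(\g_+)\subset H\otimes H\subset H^2$.

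First I would unwind $\Delta(m_+(x))_{V,W}$ explicitly, substituting the formula for $J_{V,W}$ (built from $i_-$, $\gamma_{23}$, $i_+^*$ and the associators $\Phi$) and its inverse, together with the formula for $m_+(x)$ acting on $F(V\otimes W)$. The key structural input is the coassociativity of $i_+^*$ (the lemma stating $\Phi\circ(i_+^*\otimes\id)\circ i_+^* = (\id\otimes i_+^*)\circ i_+^*$) and the coassociativity of $i_-$; these let one reorganize the composite so that the two copies of $M_+^*$ that appear can be separated. One then uses the pentagon identity for $\Phi$ and the hexagon/naturality properties of $\gamma$ to slide the braiding $\gamma$ past the comultiplication maps. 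The upshot, exactly as in the non-colored case, should be an identity of the form $\Delta(m_+(x)) = \sum (m_+\otimes m_+)(\text{something built from }x)$, where the "something" is produced by applying $i_-^{(2)}$ (double comultiplication on $M_-$) to $x$ and then regrouping. Concretely one expects $\Delta(m_+(x))$ to equal $\sum_i m_+(x_i')\otimes m_+(x_i'')$ where $\sum_i x_i'\otimes x_i'' \in F(M_-)\otimes F(M_-)$ is obtained from $x$ via the coassociative structure map on $M_-$ composed with $i_+^*$-reorganization — i.e. $U_h(\g_+)$ is in fact a Hopf subalgebra with coproduct essentially dual to the multiplication on $U(\g_-)$.

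The bookkeeping of Koszul signs is where the colored case genuinely differs from the super or classical case, so throughout I would track the $\epsilon$-factors coming from the flips hidden inside $\gamma_{23}$, inside the product on $H\otimes H$ (recall $(a\otimes b)(c\otimes d)=\epsilon(b,c)ac\otimes bd$), and inside the definition of $m_+$ itself (which already carries an $\epsilon(x,v)$). Since $J$ and all the structure maps $i_\pm$, $i_+^*$, $\Phi$ are of degree $0$, every sign that appears must cancel against another; the main obstacle is therefore not conceptual but combinatorial — checking that the signs produced when moving $\gamma$ through the two comultiplications on the left and right tensor factors match those produced by the two applications of $m_+$ on the right-hand side. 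I would handle this by first doing the computation with all maps treated as degree-$0$ (so ignoring signs), obtaining the identity formally, and then inserting the $\epsilon$-factors degree-by-degree using the enriched naturality relation $\eta_W F(f)=\epsilon(\eta,f)F(f)\eta_V$ and the Koszul rule; since $i_-$, $i_+^*$, $\Phi$, $\gamma$ are all degree $0$, the only nontrivial sign bookkeeping involves moving the (possibly nonzero degree) element $x$ and the arguments $v,w$ past one another, and these are exactly the signs already encoded in $m_+$.

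Finally, for the statement that this makes $U_h(\g_+)$ a genuine Hopf subalgebra, I would note that closure under the antipode follows by the same kind of diagrammatic manipulation using that $Q=\mu(S_0\otimes\id)J$ is built from $J$, together with the polarization of $R$; but strictly for the proposition as stated only closure under $\Delta$ is required, so I would stop once the formula $\Delta(m_+(x))=\sum m_+(x_i')\otimes m_+(x_i'')$ has been established, remarking that each $m_+(x_i')\otimes m_+(x_i'')$ lies in $U_h(\g_+)\otimes U_h(\g_+)$ by definition of $U_h(\g_+)$ as the image of $m_+$.
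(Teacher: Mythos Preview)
Your proposal is correct and follows essentially the same approach as the paper. The paper's own proof is even briefer: it simply notes that the argument in \citep[Section 21.2]{etingofbook} is pictorial (diagrammatic), hence carries over to the color setting unchanged, and then records the characterizing equation that $\Delta(m_+(x))$ must satisfy. Your expanded description---unwinding $J$ and $m_+$, invoking coassociativity of $i_+^*$ and $i_-$ together with the pentagon and hexagon relations, and observing that the Koszul bookkeeping is trivial because all structure maps $i_-$, $i_+^*$, $\Phi$, $\gamma$, $J$ are of degree $0$---is exactly what that pictorial argument amounts to when written out.

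One small correction: you cite \citep[\S 19]{etingofbook}, but the proposition under discussion lives in the \emph{second} (equicontinuous) quantization, where $M_+$ is replaced by $M_+^*$ and $\beta$ by $\gamma$; the relevant section of \citep{etingofbook} is 21.2, not 19. Correspondingly, your final remark invoking $Q=\mu(S_0\otimes\id)J$ and the polarization of $R$ belongs to the first quantization of \Cref{ch:quant1} and plays no role here---but as you yourself note, only closure under $\Delta$ is at stake, so this does not affect the argument.
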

\begin{proof}
The proof in \citep[Section 21.2]{etingofbook} is pictorial so it can also be used in the color case. 

The element $\Delta(m_+(x))$ is uniquely defined by the equation 
\begin{equation}
\begin{split}
(i_+^* \otimes \id \otimes \id) \circ (\id \otimes i_+^* \otimes \id \otimes \id) \circ \gamma_{34} \circ (\id \otimes v \otimes w) \circ (\id \otimes i_-) = \\
(i_+^* \otimes \id \otimes \id) \circ \gamma_{23}  \circ \Delta(m_+(x))(v \otimes w) \circ i_-
\end{split}
\end{equation}
for $v \in F(V), w \in F(W)$.

We want to get 
\begin{align*}
(i_+^* \otimes \id \otimes \id) \circ \gamma_{23} \circ (i_+^* \otimes \id \otimes i_+^* \otimes \id) \circ (\id \otimes v \otimes \id \otimes w) =\\
(i_+^* \otimes \id \otimes \id) \circ (i_+^* \otimes \id \otimes \id \otimes \id) \circ \gamma_{34} \circ (\id \otimes v \otimes w) \circ (i_+^* \otimes \id \otimes \id) \circ \gamma_{23} .
\end{align*}
\end{proof}

\begin{theorem}
$U_h(\g_+)$ is a quantization of $\g_+$, so for every Lie algebra there exists a quantized universal enveloping algebra.
\end{theorem}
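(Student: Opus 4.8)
The plan is to follow the blueprint of the earlier sections, transposing the Etingof–Kazhdan argument for the ordinary fibre functor into this topological, color-graded setting. Concretely, I would verify the three defining properties of a quantization: first, that $U_h(\g_+)$ is a topologically free $\K\ph$-module; second, that reduction modulo $\lambda$ recovers $U(\g_+)$ as a color Hopf algebra; and third, that the first-order term of $\frac{1}{\lambda}(\Delta - \Delta^\opp)$ on $U_h(\g_+)$ equals the cobracket $\delta$ of $\g_+$.

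\textbf{Step 1 (topological freeness and the classical limit of the algebra).} By construction $H = \End(F) \cong H_0\ph$, and $U_h(\g_+) \subset H$ is the image of the embedding $m_+: F(M_-) \to \End(F)$, which is an embedding precisely because it is so modulo $\lambda$. Since $F(M_-)$ is a complete $\K\ph$-module and $m_+$ identifies $U_h(\g_+)$ with a $\lambda$-saturated submodule, $U_h(\g_+)$ is topologically free. Reducing $m_+$ modulo $\lambda$: the associator $\Phi$ and the braiding $\gamma$ both reduce to the identity mod $\lambda$ (since $\Phi \equiv 1$ and $\beta = \tau\exp(\lambda\Omega/2) \equiv \tau$), so $m_+(x)(v) \bmod \lambda$ is just the composite $i_+^* \otimes \id \circ (\id \otimes v)\circ x$ with trivial associator, and one checks this realizes $U(\g_+)$ acting on itself. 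Thus $U_h(\g_+)/\lambda U_h(\g_+) \cong U(\g_+)$ as color algebras.

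\textbf{Step 2 (Hopf structure and its classical limit).} From the preceding propositions, $U_h(\g_+)$ is a subalgebra of $H$ closed under the coproduct $\Delta(a)_{VW} = J_{VW}^{-1} a_{V\otimes W} J_{VW}$; closure under the antipode follows as in the non-color case since the antipode is built from $\mu$, $S_0$ and $J$ (exactly as in the $Q = \mu(S_0\otimes\id)J$ formula used for $H$ itself). Since $(F,J)$ is a tensor functor, $J$ is a twist, so $\Delta$ is coassociative and $U_h(\g_+)$ is a color Hopf algebra. For the classical limit of the coproduct: exactly as in \Cref{th:Jlimit}, one shows $J \equiv 1 + \frac{\lambda}{2}r \bmod \lambda^2$ in this topological setting (the same computation, using that $\tau(r)$ annihilates the relevant vectors and $\Phi \equiv 1 \bmod \lambda^2$), whence $\Delta \equiv \Delta_0 + \frac{\lambda}{2}[\Delta_0(\cdot),r] \bmod \lambda^2$ and therefore $\frac{1}{\lambda}(\Delta - \Delta^\opp) \equiv [\Delta_0(\cdot), r - \tau(r)]\tfrac12 \cdot 2 = [\Delta_0(\cdot), r]$, which on primitive elements $x \in \g_+$ recovers $\delta(x) = x\cdot r$, the cobracket of the Manin-triple bialgebra structure on $\g_+$. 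The second statement of the theorem — existence of a QUEA for \emph{every} color Lie algebra — then follows by applying the above to $\g_+$ equipped with the trivial cobracket $\delta = 0$, in which case $r$ can be taken with $r - \tau(r) = 0$ and the quantization condition is vacuous, so any color Lie algebra $\g_+$ is the classical limit of the color Hopf algebra $U_h(\g_+)$.

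\textbf{Main obstacle.} The genuinely delicate point is not any single computation but ensuring the topological bookkeeping is sound: one must know that $m_+$ is well-defined and injective at the level of \emph{complete} color $\K\ph$-modules, that $F(M_-)$ really is complete, and that the end defining $\End(F)$ behaves well enough that $\End(F)\otimes\End(F) \hookrightarrow \End(F^2)$ (noted earlier to be only an inclusion, not an equality) still lets us read off $\Delta$ as landing in the right place after restriction to $U_h(\g_+)$. I would handle this by invoking \Cref{th:montopo} and the equicontinuity hypotheses: $M_-$ with the discrete topology and $M_+^*$ are equicontinuous $\g$-modules, the category $\M^t_\g$ is braided monoidal over complete color spaces, and all the structure maps ($i_-$, $i_+^*$, $\gamma$, $\Phi$) are continuous, so every composite appearing in $m_+$ and in $\Delta$ stays inside $\A^c$. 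Granting that, the color signs are forced to be consistent exactly as in the proof that $U_h(\g_+)$ is a subalgebra, and the remaining verifications are the pictorial arguments of \citep[Sections 21.2--21.3]{etingofbook}, which survive the passage to the color case since inserting braidings $\beta = \tau\exp(\lambda\Omega/2)$ in place of flips $\tau$ changes nothing diagrammatically.
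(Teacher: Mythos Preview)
The paper states this theorem without proof; it is presented as the culmination of the two preceding propositions (that $U_h(\g_+)$ is a subalgebra of $H$ and is closed under the coproduct), with the remaining verifications left implicit by analogy with \Cref{ch:quant1}. Your proposal is exactly the natural way to fill in those omitted details: you transplant the argument surrounding \Cref{th:Jlimit} and the subsequent proposition into the topological setting, checking topological freeness, the classical limit $U_h(\g_+)/\lambda U_h(\g_+)\cong U(\g_+)$, and the first-order cobracket condition via $J\equiv 1+\tfrac{\lambda}{2}r\bmod\lambda^2$. This is precisely the route the paper's structure invites, so there is no methodological divergence to speak of.

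One small point: your reading of the clause ``so for every Lie algebra there exists a quantized universal enveloping algebra'' as referring to Lie algebras with \emph{trivial} cobracket is almost certainly not the intended meaning. In context the sentence is summarizing that the Section~5 construction removes the finite-dimensionality hypothesis of \Cref{ch:quant1}, so that \emph{any} color Lie bialgebra $\g_+$ (realized via its Manin triple $(\g_+\oplus\g_+^*,\g_+,\g_+^*)$) admits a QUEA. Your trivial-cobracket argument is not wrong, but it proves something weaker than what is being claimed, and in that degenerate case one does not need the machinery at all since $U(\g_+)\ph$ with the undeformed coproduct already works. The substantive content of the theorem is the bialgebra case, which your Steps~1--2 do address correctly.
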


\section{Simple color Lie bialgebras of Cartan type}

In the case of  Lie superalgebras, there are the so called classical simple ones of type A-G. 

Let $A = (A_{ij})_{{i,j}\in I}$ be a Cartan matrix, $I= \{1,\dots,s\}$ and $\tau \subset I$ the set corresponding to odd roots.

Let $\g$ be the Lie superalgebra generated by $h_i, e_i$ and $f_i$ for $i \in I$. We can put a 
$\Z^s$ grading on it as follows. We denote by $z_i$ the $i$-th generator.
The elements $h_i$ are all of degree zero and $\deg(f_i) = -\deg(e_i) = z_i$. So we consider $\g$ to be graded by the root system. 
The commutation factor is given by $\epsilon_0(z_i,z_j)=-1$ if either $i\in  \tau$ or $j \in \tau$ and $\epsilon(z_i,z_j)=1$ else. 

The generators satisfy the relations \citep{geer,yamane}
\begin{equation}
[h_i,h_j] = 0, [h_i,e_j] = A_{ij}e_j , [h_i,f_j] =-A_{ij} f_j , [e_i,e_j] = \delta_{ij} h_i
\end{equation}
and the so called super classical Serre-type relations 
\begin{align*}
[e_i,e_j] &= [f_i,f_j] = 0 \\
(\ad e_i)^{1+ \abs{A_{ij}}} e_j &= (\ad f_i)^{1+ \abs{A_{ij}}} f_j  = 0 \text{ if } i \neq j, i \notin \tau \\
[e_m,[e_{m-1},[e_m,e_{m+1}]]]& = [f_m,[f_{m-1},[f_m,f_{m+1}]]] = 0 \text{ for $m-1,m,m+1 \in I, A_{mm}=0$ }\\
[[[e_{m-1},e_m],e_m],e_m]& = [[[f_{m-1},f_m],f_m],f_m] =0 
	\\&\text{ if the Cartan matrix is of type B and $\tau = m,  s=m$.}
\end{align*}
The relations respect the $\Z^s$-grading so $\g$ can be considered as a $\Z^r$-graded algebra.

For a set of constants $\epsilon_{ij} \in \K^\times, i,j =1 ,\dots,s$. We define $\sigma(a ,b ) = \prod_{i,j=1}^s \epsilon_{ij} a_i b_j$ for $a,b \in \Z^r$. Then $\sigma: \Z^r \times \Z^r \to \K^\times$ is a bicharacter and $\epsilon'(a,b) = \sigma(a,b) \sigma^{1}(b,a)$ a commutation factor. 
We set $\epsilon = \epsilon' \epsilon_0$. 
For $x,y \in g$, we define $[x,y]' = \sigma(x,y) [x,y]$.  With this bracket $\g$ becomes a $\epsilon$ color Lie algebra. 
For more details on this construction see \citep{colorlie}.

There are $r$ matrices on these Lie algebras, see e.g. \citep{karaali}. Not all of them given there respect the $\Z^r$-grading, but the standard r-matrices given by $r = \sum h_i \otimes h_i + \sum_{\alpha \in \Delta^+} e_\alpha \otimes f_{\alpha}$ do. Here $\Delta^+$ denotes the set of positive roots. 

For these Lie superalgebras there is a well known quantization given by the so called Drinfeld-Jimbo type superalgebras. To define them we first need

\begin{equation}
\qbinom{m+n}{n} = \prod_{i=0}^{n-1} \frac{t^{m+n-i} -t^{-m-n+i}}{t^{i+1} - t^{-i-1}}.
\end{equation}
Assume that the Cartan matrix $A$ is symmetrizable that is there are non-zero rationals number $d_1,\dots, d_s$ such that $d_i A_{ij} = d_j A_{ji}$.
 Set $q = \E^{\lambda/2}$ and $q_i = \E^{d_i}$.
 
 Let $U(\g)$ be the $\C\ph$ superalgebra generated by $h_i, e_i$ and $f_i$, $i=1 ,\dots s$ and relations
\begin{align*}
 [h_i,h_j] &= 0, [h_i,e_j] = A_{ij}e_j , [h_i,f_j] =-A_{ij} f_j \\
 [e_i,e_j] &= \delta_{ij} \frac{q^{d_i h_i} - q^{-d_i h_i}}{q_i -q_i^{-1}}  \\
 e_i^2 &=0 \text{ for $i \in I, A_{ii}=0$} \\
 [e_i,e_j] &=0 , i,j \in I, i \neq j, A_{ij}=0 \\
 \sum_{k=0}^{1+\abs{A_{ij}}} (-1)^k &\qbinom{1+\abs{A_{ij}}}{k} e_i^{1+\abs{A_{ij}}-k} e_j e_i^k =0, 1\leq i, j\leq s, i \neq j, i \notin \tau \\
 e_{m} e_{m-1}e_{m} e_{m+1} &+  e_{m} e_{m+1}e_{m} e_{m-1} +  e_{m-1} e_{m}e_{m+1} e_{m} \\
 &+ e_{m+1} e_{m}e_{m-1} e_{m} (q + q^{-1}) e_{m} e_{m-1}e_{m+1} e_{m} =0 , m-1,m,m+1 \in I, A_{mm}=0 \\
 e_{m-1} e_{m-1}^3 &- ( q +q^{-1}-1) e_{m} e_{m-1}e_{m}^2 -  (q +q^{-1} -1)  e_{m}^2 e_{m-1}e_{m} 
+e_m^3 e_{m-1} = 0 \\ & \text{ if the Cartan matrix is of type B and $\tau = m,  s=m$},
 \end{align*}
and the same relations where $e_i$ is replaced by $f_i$.

For more details on this super quantized universal enveloping  algebra see e.g. \citep{yamane}.

Note that  all relations are compatible with the $\Z^r$-grading. 
Again we set $x y = \sigma(x,y) xy$, and get so a $\epsilon$-color algebra.

We  define a comultiplication on $U(\g)$  by specifying it on generators as 
\begin{align*}
\Delta(e_i) = e_i  \otimes q^{d_i h_i} + 1 \otimes e_i ,\\
\Delta(f_i) = f_i  \otimes q^{-d_i h_i} + 1 \otimes f_i ,\\
\Delta(h_i) = h_i \otimes 1 + 1 \otimes h_i ,\\
\epsilon(h_i) =\epsilon(f_i) = \epsilon(e_i) =0.
\end{align*}
Here the $\epsilon$ does not appear, but it does if one computes the comultiplication of other elements, since it appears in the definition of the multiplication on $U(\g) \otimes U(\g)$.

Therefore we get a color Hopf algebra structure on $U(\g)$. 

\def\cprime{$'$}

\end{document}